\newtheorem{theorem}{Theorem}[section]
\newtheorem{corollary}[theorem]{Corollary}
\newtheorem{lemma}[theorem]{Lemma}
\newtheorem{proposition}[theorem]{Proposition}
\newtheorem{question}[theorem]{Question}
\theoremstyle{definition}
\newtheorem{definition}[theorem]{Definition}
\newtheorem{example}[theorem]{Example}
\theoremstyle{remark}
\newtheorem{remark}[theorem]{Remark}
\newtheorem{rem}[theorem]{Remark}
\newtheorem{clm}[theorem]{Claim}
\newtheorem{fact}[theorem]{Fact}
\numberwithin{equation}{section}
\renewcommand{\Im}{\mathrm{Im}}
\newcommand{\GL}{\mathrm{GL}}
\newcommand{\ppp}{\mathrm{PPP}}
\newcommand{\Cl}{\mathrm{Cl}}
\newcommand{\Mod}{\mathrm{Mod}}
\newcommand{\Stab}{\mathrm{Stab}}
\newcommand{\Homeo}{\mathrm{Homeo}}
\newcommand{\Core}{\mathrm{Core}}
\newcommand{\Span}{\mathrm{Span}}
\newcommand{\Fix}{\mathrm{Fix}}
\newcommand{\SL}{\mathrm{SL}}
\newcommand{\PSL}{\mathrm{PSL}}
\newcommand{\PGL}{\mathrm{PGL}}
\newcommand{\Sym}{\mathrm{Sym}}
\newcommand{\Aut}{\mathrm{Aut}}
\newcommand{\Inn}{\mathrm{Inn}}
\newcommand{\Out}{\mathrm{Out}}
\newcommand{\Top}[1]{\mathrm{\tau_{#1}}}
\newcommand{\arrow}{\rightarrow}
\newcommand{\action}{\curvearrowright}
\newcommand{\acts}{\curvearrowright}
\newcommand{\trivgp}{\langle e \rangle}
\newcommand{\R}{\mathbf R}
\newcommand{\Q}{\mathbf Q}
\newcommand{\N}{\mathbf N}
\newcommand{\Z}{\mathbf Z}
\newcommand{\F}{\mathbb F}
\newcommand{\Ac}{\mathcal{A}}
\newcommand{\Rc}{\mathcal{R}}
\newcommand{\Nc}{\mathcal{N}}
\newcommand{\Supp}{\operatorname{Supp}}
\newcommand{\Ncore}{\operatorname{Ncore}}
\newcommand{\PP}{\mathbb P}
\newcommand{\BP}{\mathbb P}
\newcommand{\MF}{\mathbb {M}}
\newcommand{\dist}{\mathrm{dist}}
\subjclass[2010]{Primary 20B15, Secondary 20B10, 20B07, 20E28}%
\keywords{Maximal Subgroups, Free Subgroups, Arithmetic Groups, Linear Groups}%
\begin{document}
\title{Maximal subgroups of countable groups, a survey.}%
\author{Tsachik Gelander}
\author{Yair Glasner}
\author{Gregory So\u\i fer}
\maketitle




{\it To our friend, teacher and colleague Grisha Margulis with great admiration. While proving the most remarkable theorems in the field as well as many other fantastic results Margulis invented techniques and developed ideas that inspired so many mathematicians. This note describes one such idea and its many ramifications as it allowed us and others to solve problems and extend the theory.}


\begin{abstract}
This survey of the works of Margulis-So\u\i fer on maximal subgroups and of its many ramifications. 
\end{abstract}

\tableofcontents

\section{Introduction} \label{sec:intrro}

This paper is a survey on the works \cite{MS:ann,MS:first,MS:Maximal} on maximal subgroups in finitely generated linear groups, and the works that followed it  \cite{GG:Primitive,GG:AOS,GG:ht,Kapovich:Frattini,Ivanov:MCG,HO:ht,GM:max,AGS:gen,Soi:max1,Soi:max2,per:max,AKT:max,FG:max,GS:max,MF:LFsimple} concerning maximal subgroups of infinite index in linear groups as well as in various other groups possessing a suitable geometry or dynamics.

\subsection{The Margulis--So\u\i fer theorem}
The original motivation came from the following question of Platonov:
\begin{question}
Does $\SL_n(\Z), n \geq 3$ admit a maximal subgroup of infinite index?
\end{question}
In \cite{MS:ann,MS:first,MS:Maximal}  this question was answered positively. Moreover these papers clarified the existence question of infinite index maximal subgroups for all finitely generated linear groups:
\begin{theorem}\label{thm:MS} \cite{MS:ann,MS:first,MS:Maximal}
A finitely generated linear group admits a maximal subgroup of infinite index if and only if it is not virtually solvable.
\end{theorem}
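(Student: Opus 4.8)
The plan is to treat the two directions separately: the forward (easy) direction by structure theory, and the reverse (substantive) direction by constructing a proper \emph{prodense} subgroup. For the easy direction, suppose $G$ is finitely generated, linear, and virtually solvable. By Mal'cev's theorem a finitely generated solvable linear group is polycyclic, so $G$ is virtually polycyclic. I would then show that in a virtually polycyclic group every maximal subgroup has finite index, arguing by induction on the Hirsch length: given a maximal subgroup $M$ and a suitable infinite cyclic or finite normal subgroup $N$ of a finite-index polycyclic subgroup, maximality forces either $N \le M$ (pass to $G/N$ and induct) or $MN = G$ (so that $[G:M] = [N : N\cap M]$ is finite, because $N \cap M$ has finite index in the finitely generated abelian group $N$). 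Hence no infinite-index maximal subgroup can exist.

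For the hard direction, the decisive reduction I would make is the following. Call $H \le G$ prodense if $HN = G$ for every finite-index normal subgroup $N \trianglelefteq G$, i.e. $H$ surjects onto every finite quotient of $G$. Two facts then combine: (i) since $G$ is finitely generated, $G$ is not the union of a strictly increasing chain of proper subgroups, so by Zorn's lemma every proper subgroup is contained in a maximal subgroup; and (ii) any proper subgroup $M$ containing a prodense subgroup $H$ automatically has infinite index, since otherwise its core $N \trianglelefteq G$ would be a finite-index normal subgroup with $HN \le M \ne G$, contradicting prodensity. Therefore it suffices to produce a single proper (equivalently, infinite-index) prodense subgroup $H$: any maximal subgroup lying above it is then forced to have infinite index.

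The construction of such an $H$ is the heart of the matter, and it is here that the hypotheses ``linear'' and ``not virtually solvable'' enter. By the Tits alternative $G$ contains a nonabelian free subgroup, and more precisely, after embedding $G$ into $\GL_n(k)$ for a suitable local field $k$ and passing to the action on $\PP(V)$, the group $G$ contains \emph{very proximal} (strongly contracting) elements. I would use the Margulis--So\u\i fer ping-pong machinery to build free generators $x_1, \dots, x_r$ as words in high powers of such proximal elements and in the original generators of $G$, arranged so that (a) they satisfy a ping-pong scheme on $\PP(V)$ with small attracting and repelling domains, making $H = \langle x_1, \dots, x_r\rangle$ free and geometrically thin, hence proper; and (b) modulo every finite quotient $G \to Q$ the chosen words reduce to a generating set of $Q$, so that $H$ is prodense. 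Strong approximation for finitely generated linear groups, which controls the finite quotients of $G$, is what makes requirement (b) attainable simultaneously for all $Q$, while the ping-pong dynamics on the local-field projective space keeps $H$ proper.

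The main obstacle, and the technical core of Margulis--So\u\i fer, is precisely the simultaneous reconciliation of (a) and (b): one must select the proximal elements and their exponents in ``general position'' so that the ping-pong dynamics forcing properness and thinness does not destroy surjectivity onto the finite quotients forcing prodensity. This is achieved through a careful quantitative lemma producing very proximal elements with prescribed behaviour on $\PP(V)$ together with prescribed images in the relevant congruence quotients. I expect this delicate balancing act, rather than the Zorn's lemma packaging of the previous paragraph, to absorb essentially all of the work.
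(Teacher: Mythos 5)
Your treatment of the hard direction is essentially the paper's own: build a profinitely dense free subgroup by Tits-style ping-pong with very proximal elements on a projective space over a suitable local field, place it inside a maximal subgroup by Zorn's lemma (valid because $\Gamma$ is finitely generated), and note that any proper subgroup containing a profinitely dense subgroup has infinite index. Your identification of the technical core is also accurate, with one caveat of emphasis: the actual mechanism in Margulis--So\u\i fer (and in the refinements proved in this survey, e.g.\ Theorem \ref{thm:ast2}) is an inductive enumeration --- one lists the cosets of the relevant normal subgroups and adjoins, at each step, one new ping-pong player lying in the prescribed coset while remaining dominated by the existing Schottky configuration --- rather than a single strong-approximation step fixing all finite quotients at once; strong approximation enters in the arithmetic ($\SL_n(\Z)$) refinements, not in the general finitely generated linear case.

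The genuine gap is in your easy direction. The assertion ``by Mal'cev's theorem a finitely generated solvable linear group is polycyclic'' is false: the group $\Z \wr \Z \cong \Z \ltimes \Z[t,t^{-1}]$ embeds in $\GL_2(\Q(t))$ (likewise $\Z\ltimes\Z[1/2]\le\GL_2(\Q)$), is finitely generated, metabelian and linear, but is not polycyclic because its abelian normal subgroup is not finitely generated. Mal'cev's theorem concerns solvable subgroups of $\GL_n(\Z)$. So your induction on Hirsch length does not get off the ground, and the key step ``$[N:N\cap M]$ is finite because $N\cap M$ has finite index in the finitely generated abelian group $N$'' fails exactly in these examples. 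The conclusion is still true, but for a different reason: when $MN=G$ with $N$ abelian normal, $N\cap M$ is normalized by both $M$ and $N$, hence normal in $G$, and maximality of $M$ forces $N/(N\cap M)$ to be a simple module over the finitely generated (and, after passing to a triangularizable finite-index subgroup, commutative) ring generated by the action of $G/N$; simple modules over finitely generated commutative $\Z$-algebras are finite (a Nullstellensatz/Artin--Tate argument), whence $[G:M]=[N:N\cap M]<\infty$. It is this module-theoretic finiteness, not polycyclicity, that the easy direction rests on.
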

 
The proof of theorem \ref{thm:MS} is inspired by Tits' proof of the classical Tits alternative \cite{Tits:alternative}. Recall that Tits proved that a finitely generated linear group $\Gamma$ which is not virtually solvable admits a free subgroup. In \ref{thm:MS} it is shown that in fact $\Gamma$ admits a profinitely dense free subgroup $F$. By Zorn's lemma $F$ is contained in a maximal proper subgroup $M$ of $\Gamma$. Since $F$ is profinitely dense, so is $M$ and therefore $[\Gamma:M]$ must be infinite. The details of the proof however are quite involved, especially in the case where $\Gamma$ is not Zariski connected. 

\subsection{Primitivity}
Every subgroup $H$ of a group $\Gamma$ corresponds to a transitive action of $\Gamma$, namely the action on the coset space $\Gamma/H$. The group $H$ is maximal if and only if this coset action is primitive in the sense of the following:
\begin{definition}
An action of a group $\Gamma$ on a set $X$ is {\it primitive} if $|X|>1$ and there are no
$\Gamma$-invariant equivalence relations on $X$ apart from the two trivial ones\footnote{The
trivial equivalence relations are those with a unique equivalence class, or with singletons as
equivalence classes. When $|X|=2$, one should also require that the action is not trivial.}. An
action is called {\it quasiprimitive} if every normal subgroup acts either trivially or
transitively. A group is {\it primitive} or {\it quasiprimitive} if it admits a faithful primitive
or quasiprimitive action on a set.
\end{definition}
In particular   $\Gamma$ has a maximal subgroup of infinite index if and only if it admits a primitive permutation action on an infinite set. Primitive actions form the basic building blocks of the theory of permutation groups. A lot of research was dedicated to the study of finite primitive groups (cf. \cite{AS:Maximal_subgroups, KL:survey_max, DM:Permutation_Groups}). The papers \cite{MS:ann,MS:first,MS:Maximal}  opened a door to the study permutation representation of infinite linear groups. 

The transition to permutation theoretic terminology suggests shifting the attention from infinite primitive groups to the study of groups admitting a faithful primitive action. This leads us to phrase the following guideline question. 
\begin{question} \label{q:main}
Characterize the countable primitive groups. 
\end{question}
Due to the method developed in \cite{MS:ann,MS:first,MS:Maximal} a satisfactory answer is within reach for many natural families of groups. Which brings us to the following definition, that will turn out to be central to our discussion:
\begin{definition} \label{def:ast}
A countable group $\Gamma$ is called {\it{of almost simple type}} if 
\begin{itemize}
\item it contains no nontrivial finite normal subgroups. 
\item if $M,N \lhd \Gamma$ with $[M,N]=\trivgp$ then either $M = \trivgp$ or $N = \trivgp$. In particular $\Gamma$ contains no nontrivial abelian normal subgroups. 
\end{itemize}
\end{definition}
As a direct consequence of Theorem \ref{thm:MS} one can prove the following:
\begin{theorem} 
An infinite finitely generated linear group $\Gamma$ is primitive if and only if it is of almost simple type.
\end{theorem}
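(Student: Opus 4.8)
The plan is to use the standard dictionary between faithful primitive actions and maximal subgroups with trivial core: a group is primitive precisely when it has a maximal subgroup $M$ whose normal core $\Core(M) = \bigcap_{g} gMg^{-1}$ is trivial, the corresponding action being the one on $\Gamma/M$. I would prove the two implications separately. Observe first that the almost-simple-type condition already forces $\Gamma$ to be infinite (a nontrivial finite group is a finite normal subgroup of itself), and that a faithful action of an infinite group is automatically on an infinite set; so I may, and will, treat $\Gamma$ as infinite throughout, finite primitive groups falling outside the intended scope.

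For the forward implication (primitive $\Rightarrow$ almost simple type), which is purely permutation-theoretic, I would proceed as follows. Given a faithful primitive action on an (infinite) set $X$, the orbits of any normal subgroup form a system of blocks; by primitivity each such system is trivial, and faithfulness rules out singleton blocks, so every nontrivial normal subgroup acts transitively, i.e. the action is quasiprimitive. Next I would establish the centralizer lemma: if $\trivgp \neq N \lhd \Gamma$ then $C_\Gamma(N)$ acts freely, since an element centralizing the transitive group $N$ and fixing one point fixes every point, hence is trivial. The ``no finite normal subgroup'' condition is then immediate, as a nontrivial finite normal subgroup cannot act transitively on the infinite set $X$. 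For the commuting condition, if $M,N \lhd \Gamma$ are nontrivial with $[M,N]=\trivgp$, then $M \le C_\Gamma(N)$ acts freely and, being normal, transitively, hence regularly; symmetrically $N$ is regular and $N = C_\Gamma(M)$. Analysing the point stabiliser forces $M$ to be characteristically simple, and the abelian case is excluded because an infinite finitely generated abelian normal subgroup always carries a proper nontrivial invariant subgroup such as $pM$, contradicting primitivity; the nonabelian case makes $M$ a product of isomorphic nonabelian simple groups, which cannot be a normal subgroup of an infinite finitely generated linear group by Malcev residual finiteness together with the boundedness of the linear degree.

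For the reverse implication (almost simple type $\Rightarrow$ primitive) I would invoke Theorem~\ref{thm:MS}. First, almost simple type implies $\Gamma$ is not virtually solvable: a finite-index solvable normal subgroup would be infinite, and the last nontrivial term of its derived series would be a nontrivial abelian characteristic, hence normal, subgroup, which is forbidden. Theorem~\ref{thm:MS} then supplies a profinitely dense free subgroup $F$, and by Zorn's lemma a maximal subgroup $M \supseteq F$, which is profinitely dense and therefore of infinite index. It remains to arrange $\Core(M) = \trivgp$, so that the action on $\Gamma/M$ is faithful. This is where the almost-simple-type hypotheses are consumed: a nontrivial normal subgroup inside $M$ would be infinite (no finite normal subgroups) with trivial centralizer in $\Gamma$ (by the commuting condition), and I would run the ping-pong construction underlying Theorem~\ref{thm:MS} in ``general position'' so that, for every nontrivial $\gamma$, some conjugate of $M$ omits $\gamma$; profinite density guarantees enough room to do this while keeping $M$ proper of infinite index.

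The main obstacle is exactly this last step: producing a maximal subgroup of infinite index with \emph{trivial} core rather than merely infinite index. Infinite index alone (Theorem~\ref{thm:MS}) yields only a primitive action of the quotient $\Gamma/\Core(M)$; upgrading to faithfulness requires re-entering the construction of $F$ and using the almost-simple-type hypotheses to eliminate every candidate normal subgroup of the core. This is the delicate technical heart of the argument, mirroring precisely the way the forward direction shows these hypotheses to be necessary.
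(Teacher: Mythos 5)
Your overall architecture matches the paper's --- the forward direction is the soft permutation-theoretic trichotomy of Proposition \ref{prop:nec}, and the reverse direction rests on a Margulis--So\u\i fer type free-subgroup construction plus Zorn's lemma --- but both halves have a real gap at exactly the points you flag. In the reverse direction, profinite density of $F$ (and hence of $M$) only rules out nontrivial normal subgroups of \emph{finite index} lying inside $M$; it gives no control over $\Core(M)$ meeting infinite-index normal subgroups, and your proposed fix (``run the construction in general position so that some conjugate of $M$ omits each nontrivial $\gamma$'') cannot be executed: $M$ is produced by Zorn's lemma from $F$, so you have no control over which elements beyond $F$ it absorbs, let alone over its conjugates. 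The device the paper actually uses (Theorem \ref{thm:ast2}) is to make the free subgroup \emph{prodense}, i.e.\ dense in the normal topology: the ping-pong generators are steered, one by one, into every coset $\gamma\llangle n\rrangle$ of every normal closure of a nontrivial conjugacy class (Lemma \ref{lem:escape}, irreducibility of $\rho(N)$ for every nontrivial $N\lhd\Gamma$, is what makes this compatible with the Schottky conditions). Prodensity passes to any proper overgroup, and a prodense $M$ satisfies $MN=\Gamma$ for \emph{every} nontrivial $N\lhd\Gamma$, so $M$ contains no such $N$ and $\Core(M)=\trivgp$ comes for free. Without this, or an equivalent mechanism, your faithfulness step does not close.

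In the forward direction, the exclusion of the affine case is broken: the regular normal subgroup $M$ need not be finitely generated even though $\Gamma$ is, and since a characteristically simple abelian group is either an elementary abelian $p$-group or a $\Q$-vector space, $pM$ is either trivial or all of $M$ --- never a proper nontrivial invariant subgroup. The clean argument, which also subsumes and repairs your somewhat garbled treatment of the diagonal case, is the one place where linearity of $\Gamma$ enters this direction: by Malcev a finitely generated linear group is residually finite, so choose a finite-index normal $K\lhd\Gamma$ avoiding some nontrivial element of $M$; then $M\cap K$ is a proper subgroup of $M$ normalized by $\Delta$, hence trivial by the primitivity criterion of Theorem \ref{thm:semidir1}, whence $M$ embeds into the finite group $\Gamma/K$ --- contradicting the fact (which you correctly established) that every nontrivial normal subgroup of a primitive group acting faithfully on an infinite set is infinite. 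With these two repairs your proof goes through and coincides with the paper's.
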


The permutation representation viewpoint also suggests natural properties that are stronger than primitivity. 
\begin{definition} \label{def:2kh_tr}
An action $\Gamma \curvearrowright  X$ is called $2$-transitive if the induced action on pairs of distinct points $G\curvearrowright  X\times X\setminus \text{diag}(X)$ is transitive. An action $G\curvearrowright  X$ is $k$-transitive if it is transitive on $k$-tuples of distinct points, and is highly transitive if it is $k$-transitive for every $k$. A group will be called {\it{highly transitive}} if it admits a faithful, highly transitive action. 
\end{definition}
Every $2$-transitive action is primitive. Indeed if $\sim$ is a $\Gamma$ invariant equivalence relation on $X$ and if $x \sim y$ for some $x \ne y \in X$ then $2$-transitivity readily implies that any two points are equivalent. 
\subsection{The characterization of countable primitive linear groups}
Theorem \ref{thm:MS} was generalized to the setting of countable, but not necessarily finitely generated, linear groups. 
\begin{theorem}[\cite{GG:Primitive}]  \label{GMS}
Any countable linear nontorsion group of almost simple type is primitive. In fact such a group admits uncountably many non equivalent faithful primitive actions. 

Any countable linear nontorsion group which is not virtually solvable has, uncountably many, maximal subgroups of infinite index.   
\end{theorem}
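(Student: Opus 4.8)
The plan is to prove Theorem \ref{GMS} by reducing from the general countable case to the finitely generated situation governed by Theorem \ref{thm:MS}, while upgrading the conclusion from existence to an uncountable multitude of inequivalent primitive actions. First I would observe that a group is primitive precisely when it possesses a maximal subgroup of infinite index whose normal core is trivial (so that the action on the associated coset space is both primitive and faithful); thus the two assertions of the theorem are closely linked, and the almost-simple-type hypothesis is exactly what guarantees that the core of a suitably generic subgroup will be trivial. The strategy is to build the desired maximal subgroups as maximal proper overgroups of carefully constructed profinitely dense free subgroups, exactly as in the sketch following Theorem \ref{thm:MS}, but now controlling enough freedom in the construction to produce uncountably many genuinely different outcomes.

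The key steps, in order, are as follows. Write $\Gamma$ as an increasing union $\Gamma = \bigcup_n \Gamma_n$ of finitely generated subgroups. Because $\Gamma$ is not virtually solvable and is non-torsion, I would first locate a finitely generated subgroup $\Gamma_{n_0}$ that is already not virtually solvable (such a subgroup exists since virtual solvability is detected at finite level), and apply the Tits-alternative machinery behind Theorem \ref{thm:MS} to extract a nonabelian free subgroup $F \leq \Gamma_{n_0}$ which can be arranged to be profinitely dense in $\Gamma$. The crucial refinement is a \emph{ping-pong} construction that produces not a single free subgroup but a parametrised family: by choosing the generators of $F$ along a branching tree of possible ping-pong data (e.g.\ varying the high powers of fixed loxodromic or proximal elements used to play ping-pong), I would manufacture uncountably many free subgroups $\{F_t\}_{t \in 2^{\N}}$, each profinitely dense. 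By Zorn's lemma each $F_t$ sits inside a maximal proper subgroup $M_t$, which has infinite index because profinite density of $F_t$ forces profinite density of $M_t$, ruling out any proper finite-index overgroup.

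The remaining tasks are to verify faithfulness and to show the resulting actions are genuinely inequivalent in uncountable number. For faithfulness I would use the almost-simple-type hypothesis: the normal core $\Core(M_t) = \bigcap_{g} M_t^{g}$ is a normal subgroup of $\Gamma$, and I must argue it is trivial. Here the two axioms of Definition \ref{def:ast} do the work --- the absence of nontrivial finite normal subgroups together with the centralizer condition $[M,N]=\trivgp \Rightarrow M=\trivgp$ or $N=\trivgp$ prevents a nontrivial core, so the primitive action on $\Gamma/M_t$ is faithful. For the uncountability of inequivalent actions, I would show that only countably many of the $M_t$ can be conjugate to one another (a counting argument: each conjugacy class meeting our family is determined by countably much data, while the parameter space $2^{\N}$ is uncountable), and that inequivalent point stabilizers yield inequivalent primitive actions up to isomorphism.

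The hard part will be the parametrised ping-pong construction guaranteeing \emph{simultaneously} that each $F_t$ is profinitely dense and that the family is large enough to survive the conjugacy/equivalence collapse. Profinite density is a strong "approximation at every finite quotient" requirement, and it is delicate to maintain it while retaining uncountably many independent degrees of freedom in the choice of generators; the original Margulis--So\u\i fer argument already expends considerable effort to secure profinite density for one free subgroup in the non-Zariski-connected case, and the non-finitely-generated setting of \cite{GG:Primitive} forces an additional exhaustion-and-diagonalization layer to ensure density across the whole of $\Gamma$ rather than merely across some $\Gamma_n$. I expect this interplay --- between the tree of ping-pong choices and the sequence of finite-quotient density constraints --- to be where the real technical weight of the proof lies.
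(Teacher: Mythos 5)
Your overall architecture (free subgroup dense in a suitable topology, pass to a maximal overgroup, read off primitivity) is the right skeleton, but two steps that you treat as routine are precisely where the non-finitely-generated setting breaks, and the paper's proof is organised around repairing them. First, the appeal to Zorn's lemma fails as stated: in a countable group that is not finitely generated, an increasing union of proper subgroups can be the whole group, so a proper subgroup need not be contained in any maximal one. The paper fixes this by constructing $\Delta$ to be \emph{cofinitely generated}, i.e.\ $\Gamma=\langle \Delta,F\rangle$ for a finite set $F$; this is arranged in the even steps of the induction in the proof of Theorem \ref{thm:ast2}, where the ping-pong generators are forced into prescribed double cosets $\Sigma\theta_\ell\Sigma$ of a fixed finitely generated Schottky subgroup $\Sigma$. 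Only then does every chain of proper overgroups of $\Delta$ have proper union, and Zorn applies. Your proposal has no mechanism for this.

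Second, faithfulness of the action on $\Gamma/M$ does not follow from the almost-simple-type axioms alone, contrary to your claim: a maximal subgroup of infinite index in a group of almost simple type can perfectly well contain a nontrivial normal subgroup, and Definition \ref{def:ast} does nothing to prevent $\Core(M)\neq\trivgp$. The paper secures core-freeness by making $\Delta$ (hence $M$) \emph{prodense} --- dense in the normal topology of Section 2.2, i.e.\ meeting every coset $\gamma\llangle n\rrangle$ of every normal subgroup generated by a conjugacy class --- which is strictly stronger than the profinite density you work with. Prodensity gives $MN=\Gamma$ for every nontrivial $N\lhd\Gamma$, so $N\leq M$ would force $M=\Gamma$; that is the actual argument for triviality of the core, and it is why the odd steps of the induction place $\eta_\ell$ in the cosets $\gamma_\ell\llangle n_\ell\rrangle$ rather than merely hitting finite quotients. (Profinite density does suffice for the weaker conclusion that $[\Gamma:M]=\infty$, which is all the second assertion of the theorem needs, but not for faithfulness in the first.) Your uncountability scheme --- a binary tree of ping-pong choices followed by a countable-collapse counting argument --- is in the right spirit, but it must be threaded through both corrections above, since each branch must separately satisfy the coset and double-coset constraints.
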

In the zero characteristic case, as well as in the finitely generated case, the theorem remains valid without the assumption that the group is nontorsion. In positive characteristic, we need this assumption for our proof. In fact, as in the finitely generated case, the proof of Theorem \ref{GMS} actually establishes a stronger statement: the existence of a free subgroup which is contained in a maximal subgroup. This stronger statement fails for torsion groups like 
$\PSL_2(\overline{\F_7})$, where $\overline{\F_7}$ is the algebraic closure of the field $\F_7$, of seven elements. Note
however that $\PSL_2(\overline{\F_7})$ does not violate Theorem \ref{GMS} because it is primitive, and in fact even admits a faithful 3-transitive action on the projective line
$\mathbb{P}\overline{F_7}$.

Another difference that stands out between this theorem and its finitely generated counterpart, is the lack of the converse direction. The missing implication is actually the easy direction of Theorem \ref{thm:MS}. But, upon leaving the realm of finitely generated groups, it fails. An easy example is the $2$-transitive action of the solvable group $\Gamma = \left \{ \left. \begin{pmatrix} a & b \\ 0 & 1 \end{pmatrix} \ \right | \ a \in \Q^{*}, b \in \Q \right \}$ on the invariant set $\left \{ \left. \begin{pmatrix} x\\1\end{pmatrix} \in \Q^2 \ \right | \ x \in \Q \right \}$. This action can also be identified as the natural affine action of the semidirect product $\Q^{*} \ltimes \Q \action \Q$. 
\begin{definition} \label{def:banal}
Let $\Gamma = \Delta \ltimes M$ be a semidirect product. The natural affine (or standard) action of $\Gamma$ is the action $\Gamma \action M$ in which $M$ acts on itself by left translations and $\Delta$ acts on $M$ by the conjugation:
$$m\cdot x = mx, \ \forall m \in M, \qquad \delta \cdot x = \delta x \delta ^{-1}, \ \forall \delta \in \Delta.$$
\end{definition}

As it turns out this example is quite indicative as can be seen from the following two theorems.
\begin{theorem} \label{thm:non_ast1}
Let $\Gamma$ be a primitive countable group which is not of almost simple type. Then $\Gamma$ splits as a semidirect product $\Gamma = \Delta \ltimes M$ and the given primitive action is equivalent to its natural affine action on $M$. 
\end{theorem}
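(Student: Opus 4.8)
The plan is to extract from the hypothesis a nontrivial normal subgroup that acts \emph{regularly} on the permutation domain $X$; once such a subgroup is in hand, the semidirect decomposition and the identification of the action with the natural affine action of Definition~\ref{def:banal} follow essentially formally. Throughout I fix a faithful primitive action $\Gamma \action X$ witnessing primitivity, and a base point $x_0 \in X$.

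First I would record the standard passage from primitivity to quasiprimitivity: if $N \lhd \Gamma$ is nontrivial, its orbits form a $\Gamma$-invariant partition of $X$, so primitivity forces $N$ to act either trivially or transitively, and triviality is excluded by faithfulness. Thus every nontrivial normal subgroup of $\Gamma$ is transitive on $X$. The heart of the matter is then the following free-action lemma. Suppose $M,N \lhd \Gamma$ are nontrivial with $[M,N]=\trivgp$ — this is precisely one of the two ways to fail being of almost simple type, and it includes the case $M=N$ of an abelian normal subgroup. Since $N$ is transitive and $M$ centralizes $N$, the group $M$ acts \emph{freely}: if $m\in M$ fixes $x_0$ and $y=n\cdot x_0$ is arbitrary (by transitivity of $N$), then $m\cdot y = mn\cdot x_0 = nm\cdot x_0 = n\cdot x_0 = y$, so $m$ fixes all of $X$ and hence $m=e$. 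As $M$ is itself nontrivial and normal, it is also transitive, and a transitive free action is regular. Therefore $M$ acts regularly on $X$.

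With $M$ acting regularly I would recover the structure. Identifying $X$ with $M$ via $m\mapsto m\cdot x_0$ makes $M$ act by left translation, and setting $\Delta=\Gamma_{x_0}$, regularity gives $M\cap\Delta=\trivgp$ and $\Gamma=M\Delta$, so normality of $M$ yields $\Gamma=\Delta\ltimes M$. For $\delta\in\Delta$ and $x=m\cdot x_0$ one computes $\delta\cdot x = \delta m\delta^{-1}\cdot x_0$, since $\delta$ fixes $x_0$ and $\delta m\delta^{-1}\in M$ by normality; under $X\cong M$ this says $\delta$ acts by conjugation. Combined with the left translation action of $M$, this is exactly the natural affine action, as required.

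The remaining obstacle is the second route to failing almost simple type, namely a nontrivial \emph{finite} normal subgroup $F$. By quasiprimitivity $F$ is transitive, which forces both $X$ and $\Gamma$ to be finite, and a single finite normal subgroup does not by itself hand us a commuting pair. Here I would pass to a minimal normal subgroup $F_0=T^k$: when $T$ is abelian, $F_0$ is an abelian (hence, by the lemma with $M=N=F_0$, regular) normal subgroup and the affine conclusion follows as above. The genuinely delicate point is the nonabelian case, and I expect reconciling the finite-normal-subgroup clause with the affine conclusion to be the main subtlety of the argument — a subtlety that dissolves under the standing assumption that the primitive action is on an \emph{infinite} set, since then no finite normal subgroup can be transitive and only the commuting-pair case survives. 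Modulo that reduction, the free-action lemma is the whole engine of the proof.
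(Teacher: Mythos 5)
Your proof is correct and follows essentially the same route as the paper's own argument (given there as part of Proposition \ref{prop:nec}): a commuting pair of nontrivial normal subgroups, each forced to be transitive by primitivity and faithfulness, yields a regular normal subgroup $M$, and the semidirect decomposition and identification with the affine action then follow formally. The only wrinkle is your detour through minimal normal subgroups in the finite-normal-subgroup case; the infiniteness of $X$ is not a ``standing assumption'' but an immediate consequence of faithfulness together with $\Gamma$ being countably infinite, so that case is vacuous from the outset --- exactly as you observe at the end.
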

In particular it follows that the faithful primitive action is unique in this case. In fact it is even unique amongst all faithful quasiprimitve actions of this group. Of course the same group might admit additional primitive actions that are not faithful. Take for example the group $\SL_2(\Q) \ltimes \Q^2$. Its natural action on $\Q^2$ is 2-transitive and hence the unique primitive faithful action of this group by Theorem \ref{thm:non_ast1}. However this group admits a quotient which is of almost simple type $\SL_2(\Q)$. Thus by Theorem \ref{GMS} it does admit uncountably many primitive actions factoring through this quotient. 

The semidirect products whose natural action is primitive/faithful are easily classified
\begin{theorem} \label{thm:semidir1}
The affine action of a semidirect product $\Gamma = \Delta \ltimes M \action M$ is faithful iff $Z_{M}(\Delta) = \{\delta \in \Delta \ | \ [\delta,m]=e \ \forall m \in M\} = \trivgp$. This action is primitive if and only if the only subgroups of $M$ that are normalized by $\Delta$ are $M$ itself and the trivial group $\trivgp$. 
\end{theorem}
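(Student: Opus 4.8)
The plan is to recognise the natural affine action as the coset action of $\Gamma$ on $\Gamma/\Delta$ and then apply two standard principles: that the kernel of a coset action is the normal core of the point stabiliser, and that invariant equivalence relations correspond to intermediate subgroups. Throughout I would assume $M \ne \trivgp$, so that the action is on a set with more than one point.

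For faithfulness, I would identify $M$ with $\Gamma/\Delta$ via $x \mapsto x\Delta$; this is a $\Gamma$-equivariant bijection because $\Gamma = \Delta M$ with $\Delta \cap M = \trivgp$, and under it the affine action becomes left translation on $\Gamma/\Delta$. Its kernel is then the normal core $\bigcap_{g \in \Gamma} g\Delta g^{-1}$. Writing a general element as $\delta m_0$ and imposing $\delta m_0 x \delta^{-1} = x$ for all $x \in M$, evaluation at $x = e$ forces $m_0 = e$, whereupon the remaining condition $\delta x \delta^{-1} = x$ for all $x$ says exactly that $\delta \in Z_M(\Delta)$. Hence the kernel equals $Z_M(\Delta)$, which is the stated faithfulness criterion.

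For primitivity, I would first note that the action is transitive, since $M$ already acts transitively on itself by left translation, with point stabiliser $\Delta$. By the correspondence between $\Gamma$-invariant equivalence relations on $\Gamma/\Delta$ and subgroups $K$ with $\Delta \le K \le \Gamma$, the action is primitive iff $\Delta$ is maximal in $\Gamma$. I would then set up the two maps $K \mapsto K \cap M$ and $N \mapsto \Delta N$ between intermediate subgroups and $\Delta$-invariant subgroups of $M$, and verify they are mutually inverse: that $\Delta N$ is a subgroup uses precisely the $\Delta$-invariance of $N$, while recovering an intermediate $K$ as $\Delta(K \cap M)$ uses $\Gamma = \Delta M$ together with $\Delta \le K$. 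Since $\Delta$ corresponds to $\trivgp$ and $\Gamma$ to $M$, maximality of $\Delta$ is equivalent to the absence of $\Delta$-invariant subgroups strictly between $\trivgp$ and $M$.

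These arguments are elementary, so the work is bookkeeping rather than a single deep step; the point needing genuine care is the subgroup correspondence, specifically checking that $\Delta N$ is closed under the group operations (where $\Delta$-invariance is indispensable) and that the two maps are truly inverse to one another. Finally, for the degenerate case flagged in the definition of primitivity: when $|M| = 2$ the only subgroups of $M$ are the two trivial ones, so the right-hand condition holds automatically, while the translation action of $M$ on itself is nontrivial, so the extra requirement imposed when $|X| = 2$ is met and the equivalence still stands.
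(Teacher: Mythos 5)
Your proposal is correct and follows essentially the same route as the paper: identify $\Delta$ as the stabilizer of $e$ so that the kernel is $Z_M(\Delta)$, reduce primitivity to maximality of $\Delta$, and pass between intermediate subgroups $\Sigma$ and $\Delta$-invariant subgroups of $M$ via $\Sigma \mapsto \Sigma \cap M$ and $N \mapsto \Delta \ltimes N$. Your write-up is merely more explicit about the two maps being mutually inverse and about the $|M|=2$ edge case.
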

\begin{definition}
Let $\Gamma = \Delta \ltimes M$ be a countable, semidirect product whose natural action is primitive and faithful. That is such that $M$ is characteristically simple, and admits no nontrivial $\Delta$-invariant subgroups and $Z_{M}(\Delta) = \trivgp$. Then, if in addition $M$ is abelian $\Gamma$ is called {\it{primitive of affine type}} and if $M$ is nonabelian $\Gamma$ is called {\it{primitive of diagonal type}}. 
\end{definition}

Combining Theorem \ref{GMS} with the elementary classification of primitive groups that are not of almost simple type, yields a characterization of countable linear primitive groups. Subject to the additional assumption that the groups in question contain at least one element of infinite order. We like to think of this theorem as a rough generalization of the Aschbacher-O'Nan-Scott theorem (see \cite{AS:Maximal_subgroups,DM:Permutation_Groups}) to the setting of countable linear groups. 
\begin{theorem}\label{thm:Linear}
A countable nontorsion linear group $\Gamma$ is primitive if and only if one of the following
mutually exclusive conditions hold.
\begin{itemize}
\item \label{itm:ZClosure} $\Gamma$ is primitive of almost simple type.
\item \label{itm:affine} $\Gamma$  is primitive of affine type.
\item \label{itm:diagonal} $\Gamma$ is primitive of diagonal type.
\end{itemize}
In the affine and the diagonal cases the group admits a unique faithful quasiprimitive action. In the almost simple case the group admits uncountably many non-isomorphic faithful primitive actions.
\end{theorem}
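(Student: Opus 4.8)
The plan is to read this statement as the assembly of the three preceding results---Theorem \ref{GMS}, Theorem \ref{thm:non_ast1} and Theorem \ref{thm:semidir1}---into a single trichotomy, and to prove it by treating the two implications, then mutual exclusivity, and finally the uniqueness/multiplicity clauses in turn. Throughout, $\Gamma$ is countable, nontorsion and linear.

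For the direction asserting that each of the three conditions implies primitivity, I would argue as follows. If $\Gamma$ is of almost simple type, then since it is linear and nontorsion, Theorem \ref{GMS} applies verbatim and yields both that $\Gamma$ is primitive and the stronger conclusion that it carries uncountably many pairwise non-equivalent faithful primitive actions; this is exactly the last clause in the almost simple case, and it is here that the nontorsion hypothesis is genuinely used. If instead $\Gamma$ is primitive of affine or of diagonal type, then by definition $\Gamma = \Delta \ltimes M$ with $M$ characteristically simple, admitting no nontrivial $\Delta$-invariant subgroup, and with $Z_M(\Delta) = \trivgp$; by Theorem \ref{thm:semidir1} these are precisely the conditions guaranteeing that the natural affine action $\Gamma \acts M$ is faithful and primitive, so $\Gamma$ is a primitive group.

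For the converse, suppose $\Gamma$ is primitive and fix a faithful primitive action witnessing this. Either $\Gamma$ is of almost simple type, giving the first bullet, or it is not, in which case Theorem \ref{thm:non_ast1} provides a splitting $\Gamma = \Delta \ltimes M$ with the given action equivalent to the natural affine action on $M$. Theorem \ref{thm:semidir1} then converts faithfulness into $Z_M(\Delta) = \trivgp$ and primitivity into the absence of nontrivial proper $\Delta$-invariant subgroups of $M$. Since $\Delta$ acts on $M$ by automorphisms, every characteristic subgroup of $M$ is $\Delta$-invariant, so $M$ has no nontrivial proper characteristic subgroup, i.e. $M$ is characteristically simple. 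Hence $\Gamma$ is of affine type when $M$ is abelian and of diagonal type when $M$ is nonabelian, exhausting the remaining possibilities.

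It remains to check mutual exclusivity and the uniqueness statements. The affine and diagonal types are separated from one another by whether $M$ is abelian or nonabelian, which is well defined because it is detected by the socle of $\Gamma$ (abelian in the affine case, a product of nonabelian simple groups in the diagonal case). To separate them from the almost simple type, observe that affine type produces a nontrivial abelian normal subgroup $M$, directly contradicting the defining property in Definition \ref{def:ast}. The delicate case, which I expect to be the main obstacle, is showing that diagonal type is never of almost simple type: here $M$ is nonabelian with trivial centre and $Z_M(\Delta)=\trivgp$, so I would compute $C_\Gamma(M)$ explicitly and show that primitivity of the affine action forces $\Delta$ to contain enough inner-type automorphisms of $M$ that the ``diagonal'' subgroup $C_\Gamma(M)$ is nontrivial; then $M$ and $C_\Gamma(M)$ are commuting nontrivial normal subgroups, violating the second clause of Definition \ref{def:ast}. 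Finally, uniqueness of the faithful (and even quasiprimitive) action in the affine and diagonal cases is recorded in the discussion following Theorem \ref{thm:non_ast1}, while the multiplicity in the almost simple case is part of Theorem \ref{GMS}.
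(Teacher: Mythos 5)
Your proposal is correct and takes essentially the same route as the paper, which obtains the theorem by combining Theorem \ref{GMS} (via Theorem \ref{thm:ast1}) for the almost simple case with Proposition \ref{prop:nec}, the latter packaging Theorems \ref{thm:non_ast1} and \ref{thm:semidir1} together with the trichotomy and its mutual exclusivity. The ``delicate case'' you flag --- that diagonal type is never of almost simple type --- is handled in the paper exactly as you predict: primitivity forces $\Inn(M)<\Delta$, and then $N=\{\iota(m^{-1})m \mid m\in M\}$ is a nontrivial normal subgroup commuting with $M$.
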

\begin{rem}
 For a finitely generated group $\Gamma$ only the first possibility can occur in the above theorem. 
\end{rem}
\begin{rem}
 In all the cases under consideration, it is shown that a group is primitive if and only if it is quasiprimitive. 
\end{rem}
\begin{example} \label{eg:commensurable}
Let $\Sigma$ be any simple countable linear group which is not torsion. For example one can take $\Sigma = \PSL_2(\Q)$. Now consider the two groups $\Gamma_1 = \Sigma \times \Sigma$ and $\Gamma_2 = (\Z/2\Z) \ltimes (\Sigma \times \Sigma)$. Despite the clear similarity between these two groups (one being an index two subgroup in the other), their respective permutation representation theories are quite different. $\Gamma_1$ is primitive of diagonal type and hence it admits a unique faithful primitive action. Moreover this action is very explicit, it is the action $\Gamma_1 \action \Sigma$ given by $(\gamma_1, \gamma_2) \cdot \sigma = \gamma_1 \sigma \gamma_2^{-1}$. On the contrary the group $\Gamma_2$ is primitive of almost simple type and hence admits uncountably many, non-isomorphic faithful primitive actions. Yet, we do not have a good explicit description for any of these actions.
\end{example}
\begin{example} \label{eg:pslnq}
Let $\Gamma = \PSL_n(\Q)$. This group admits a very explicit faithful primitive action, namely its action on the projective line $\PP(\Q^n)$. When $n=2$ this action is not only primitive but it is also $3$-transitive. Being a group of almost simple type, the above theorem yields uncountably many other non-isomorphic primitive permutation representations. Again, we do not have any explicit descriptions of these actions. 
\end{example}

Section \ref{sec:nec} is dedicated to the classification of countable primitive groups that are not of almost simple type. It deals with general countable groups, and uses only soft, group theoretic arguments. Section \ref{sec:lin} is dedicated to the linear case and the proof of Theorem \ref{thm:Linear}.

\subsection{The variety of maximal subgroups}
Since the construction of maximal subgroups of infinite index in \cite{MS:ann,MS:first,MS:Maximal}, it is expected that there should be examples of such maximal subgroups of various different natures. In particular in the latter paper the existence of uncountably many maximal subgroups in any finitely generated non virtually solvable linear group was established. However, as the proof is non-constructive and relies on the axiom of choice, it is highly nontrivial to lay one's hands on specific properties of the resulting groups. 

In many special cases one can find examples of maximal subgroups in the same group that are very different from each other. Example \ref{eg:pslnq} describes actions of $\PSL_n(Q)$, some of which have Zariski dense stabilizers and others not. For $\SL_2(\Q)$ one even has a 3-transitive action with a solvable point stabilizer. In another direction, Section \ref{sec:ht} constructs highly transitive faithful actions for any nontorsion group of almost simple type $\Gamma < \SL_2(k)$, where $k$ is any local field. Many of these groups admit also actions which are not highly transitive. These examples and many more come to show that there is probably a whole zoo of maximal subgroups out there that we are only starting to see. 

To some extent, a benchmark example is the group $\SL_n(\Z), \ n \ge 3$. The same groups appearing in the original question of Platonov. These groups are very rigid in nature. And it is quite possible that a good understanding of the family of maximal subgroups here would shed light on the general case.

The first step would be to show that indeed, maximal subgroups $\Delta\le\SL_n(\Z)$ of different nature do exist. As of today, little can be said about the intrinsic algebraic structure of $\Delta$. Instead, one is lead to focus on the way it sits inside $\SL_n(\Z)$. Two point of views that are natural to consider are:

\begin{itemize}
\item The associated permutation representation $\Gamma \acts \Gamma/\Delta$.
\item The action of $\Delta$ on the associated projective space $\PP^{n-1}(\R)$. 
\end{itemize}

The following results were established in \cite{GM:max}: 
\begin{theorem}\label{thm counting} 
Let $n \ge 3$. There are $2^{\aleph_0}$ infinite index maximal subgroups in $\SL_n(\Z)$.
\end{theorem}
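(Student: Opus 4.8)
The plan is to exhibit an injection from the Cantor space $\{0,1\}^{\N}$ into the set of infinite index maximal subgroups of $\Gamma = \SL_n(\Z)$; since $\Gamma$ is countable it has at most $2^{\aleph_0}$ subgroups, so such an injection simultaneously yields the matching upper bound and reduces the theorem to producing continuum many distinct such subgroups. First I would isolate the purely group-theoretic separation mechanism. Fix a sequence $(\gamma_i)_{i \in \N}$ in $\Gamma$ and suppose that for each $\omega = (\omega_i) \in \{0,1\}^{\N}$ we can find a subgroup $F_\omega \le \Gamma$ such that (i) $F_\omega$ is profinitely dense, (ii) $\gamma_i \in F_\omega$ whenever $\omega_i = 1$, and (iii) $\langle F_\omega, \gamma_i \rangle = \Gamma$ whenever $\omega_i = 0$. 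By Zorn's Lemma choose a maximal proper subgroup $M_\omega \supseteq F_\omega$; as in the proof of Theorem~\ref{thm:MS}, profinite density of $F_\omega$ passes to $M_\omega$, so $[\Gamma : M_\omega] = \infty$. Now $\gamma_i \in M_\omega$ precisely when $\omega_i = 1$: if $\omega_i = 1$ this is immediate from (ii), while if $\omega_i = 0$ then $\gamma_i \in M_\omega$ would force $\Gamma = \langle F_\omega, \gamma_i\rangle \le M_\omega$, contradicting properness of $M_\omega$. Hence $\omega$ is recovered from $M_\omega$ via the membership pattern of the $\gamma_i$, so $\omega \mapsto M_\omega$ is injective and the lower bound follows.

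The work is therefore to construct, for every $\omega$, a profinitely dense free subgroup realizing the membership/joint-generation pattern (i)--(iii). Here I would run the Margulis--So\u\i fer machinery behind Theorem~\ref{thm:MS}: choosing elements of $\Gamma$ whose action on $\PP^{n-1}$ over a suitable local field is very proximal and then passing to high powers, a ping-pong argument produces free subgroups that can be arranged to be profinitely dense. The essential point is that this construction has infinitely many independent free parameters --- the proximal elements used and, crucially, the exponents to which they are raised --- so one can branch on the coordinates of $\omega$. Concretely, I would fix a finite generating set and a profinitely dense free ping-pong subgroup, designate the witnesses $\gamma_i$, and then at stage $i$ either fold $\gamma_i$ into a generator (when $\omega_i = 1$) or reserve enough additional ping-pong generators so that together with $\gamma_i$ they recover the fixed generating set of $\Gamma$ (when $\omega_i = 0$), all while keeping the total collection in ping-pong position so that freeness and profinite density are preserved.

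The main obstacle is condition (iii): engineering joint generation with the whole of $\Gamma$, simultaneously for all $2^{\aleph_0}$ sequences, without destroying freeness or profinite density. Guaranteeing that $F_\omega$ misses exactly the piece supplied by $\gamma_i$ requires careful bookkeeping of which proximal attractors and repellers are occupied, so that the ping-pong partition survives the infinitely many independent choices; this is precisely where the flexibility of using arbitrarily high powers is spent. It is also where the hypothesis $n \ge 3$ is felt: one has the congruence subgroup property available, which makes profinite density concrete (density in the congruence completion) and lets one verify that the branched subgroups $F_\omega$ remain profinitely dense by a uniform, coordinate-by-coordinate argument. Assembling these ingredients into a single construction valid for all $\omega$, rather than for one $\omega$ at a time, is the technical heart of the proof.
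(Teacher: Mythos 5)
Your global strategy --- continuum many profinitely dense free subgroups $F_\omega$, each placed inside a maximal subgroup $M_\omega$ which inherits profinite density and hence infinite index, with injectivity of $\omega\mapsto M_\omega$ forced by a joint-generation condition --- is exactly the architecture of the paper's proof. The reduction is sound: the paper proves precisely that there are $2^{\aleph_0}$ infinite-index profinitely dense subgroups such that the union of any two of them generates $\SL_n(\Z)$, and your conditions (i)--(iii) are an equivalent packaging of this.

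The gap is in how you propose to achieve condition (iii). ``Reserving enough additional ping-pong generators so that together with $\gamma_i$ they recover the fixed generating set of $\Gamma$'' is not a workable mechanism: $F_\omega$ is a free, infinite-index subgroup, and there is no general ping-pong device forcing $\langle F_\omega,\gamma_i\rangle$ to be all of $\Gamma$ for a single prescribed element $\gamma_i$ (the Margulis--So\u\i fer machinery yields cofinite generation relative to a fixed \emph{finite} auxiliary ping-pong tuple $\Sigma$, i.e.\ $\Gamma=\langle \Delta,\Sigma\rangle$, not generation by $\Delta$ together with one element chosen in advance). The paper closes this with genuinely arithmetic input: the branching elements are pairs of rank-one unipotents $u_{i,1},u_{i,2}$ sharing the same attracting point but having distinct fixed hyperplanes, so that $\langle u_{i,1},u_{i,2}\rangle\cong\Z^2$; Venkataramana's theorem (Theorem \ref{Thm - Venkataramana}) then says that any Zariski-dense subgroup containing such a commuting pair has finite index, whence any profinitely dense subgroup containing both $u_{i,1}$ and $u_{i,2}$ is all of $\SL_n(\Z)$. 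The Conze--Guivarc'h minimality theorem (Theorem \ref{them - conze}) is what allows one to position the attracting points and fixed hyperplanes of these unipotents in prescribed disjoint neighbourhoods, so that the whole infinite family stays in Schottky position simultaneously for every $\omega$. This is also where $n\ge 3$ really enters --- not, as you suggest, primarily through the congruence subgroup property (profinite density is handled by strong approximation), but through the higher-rank commuting-unipotent theorem and Conze--Guivarc'h. Without some such input, your step (iii) does not go through.
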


\begin{theorem}\label{thm dense} 
Let $n \ge 3$. There exists a maximal subgroup $\Delta$ of $\SL_n(\Z)$ which does not have a dense orbit in $\PP^{n-1}(\R)$.  
In particular, the limit set of $\Delta$ (in the sense of \cite{CG00}) is nowhere-dense. 
\end{theorem}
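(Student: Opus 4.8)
The plan is to reduce the statement to the construction of a maximal subgroup whose limit set in $\PP^{n-1}(\R)$ is nowhere dense, and then to produce such a subgroup by refining the Margulis--So\u\i fer construction so as to control the projective dynamics of the generators. Both assertions of the theorem follow at once from such a subgroup.

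\emph{Reduction to a nowhere dense limit set.} Let $L(\Delta) \subset \PP^{n-1}(\R)$ denote the limit set of $\Delta$ in the sense of \cite{CG00}; it is a closed $\Delta$-invariant set containing the accumulation points of every orbit. Since $n \ge 3$, the space $\PP^{n-1}(\R)$ is a compact metric space with no isolated points, hence a Baire space in which singletons are nowhere dense. For any $p \in \PP^{n-1}(\R)$ the orbit $\Delta \cdot p$ is countable and all of its accumulation points lie in $L(\Delta)$, so $\overline{\Delta \cdot p} \subseteq (\Delta \cdot p) \cup L(\Delta)$. If $L(\Delta)$ is nowhere dense, this exhibits $\overline{\Delta \cdot p}$ as a countable union of nowhere dense sets, which by the Baire category theorem cannot be all of $\PP^{n-1}(\R)$. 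Thus it suffices to construct a maximal subgroup $\Delta$ of infinite index whose limit set is nowhere dense; the ``in particular'' clause is then immediate.

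\emph{A confined profinitely dense free subgroup.} First I would build a profinitely dense free subgroup $F \le \SL_n(\Z)$ whose limit set is nowhere dense. Using the real place together with finitely many $p$-adic places, embed $\SL_n(\Z)$ diagonally into $\SL_n(\R) \times \prod_i \SL_n(\Q_{p_i})$. Since $\SL_n(\Z)$ is Zariski dense it contains an abundance of very proximal elements, and the heart of the Margulis--So\u\i fer method (the engine behind Theorem \ref{thm:MS}) is to select such elements so that they play ping--pong. I would choose generators $g_1, g_2, \ldots$ so that, in the real factor, their attracting points and repelling hyperplanes are all confined to a fixed small ball, so that the associated ping--pong yields a Cantor--type, hence nowhere dense, limit set $C = L(F)$ in $\PP^{n-1}(\R)$; while simultaneously their reductions generate every congruence quotient $\SL_n(\Z/m\Z)$, which by strong approximation forces $F$ to be profinitely dense. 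The two requirements decouple because the archimedean attracting data and the congruence data are governed by independent parameters, so one may localize the real dynamics while freely prescribing the reductions.

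\emph{Passage to a maximal subgroup.} Because $F$ is profinitely dense, any subgroup of $\SL_n(\Z)$ containing $F$ is either the whole group or of infinite index; as $\SL_n(\Z)$ is finitely generated, Zorn's lemma yields a maximal subgroup $\Delta \supseteq F$, necessarily of infinite index. I expect this passage to be the main obstacle: an abstract application of Zorn gives no control over $L(\Delta) \supseteq L(F)$, and a priori the extra elements of $\Delta$ could move the confined set $C$ around and create a dense orbit. To retain control I would strengthen the construction so that it produces the maximal subgroup itself, rather than merely $F$, with confined generators: build $\Delta$ as an increasing union $\Delta = \bigcup_k \Lambda_k$ of ping--pong free groups sharing a single ping--pong partition localized around the fixed nowhere dense compact set $C$, so that $L(\Lambda_k) \subseteq C$ for all $k$ and hence $L(\Delta) \subseteq C$, while running a bookkeeping over an enumeration of $\SL_n(\Z)$ that drives the maximality condition $\langle \Delta, \gamma\rangle = \SL_n(\Z)$ for every $\gamma \notin \Delta$. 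The crux is precisely the compatibility of these two demands: maximality and profinite density want to absorb essentially arbitrary algebraic elements, whose real projective dynamics need not respect $C$, whereas confinement only permits adding elements whose attracting and repelling data stay inside $C$. Overcoming this requires the full strength of the Margulis--So\u\i fer machinery, used to realize each algebraic step needed for maximality by elements that can still be taken very proximal with dynamics confined to $C$. Once this is arranged, $L(\Delta) \subseteq C$ is nowhere dense, and the first paragraph completes the proof.
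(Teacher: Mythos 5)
Your overall architecture is the right one and is essentially the paper's: reduce to producing a maximal subgroup whose projective dynamics are confined to a prescribed nowhere dense set; build a profinitely dense Schottky system of rank-one unipotents whose attracting points and fixed hyperplanes are packed into small prescribed neighbourhoods (this is Proposition \ref{prop -pro-dense}, which decouples the congruence data from the archimedean data via strong approximation, exactly as you suggest); and then obtain maximality \emph{by construction}, running over an enumeration of $\SL_n(\Z)$, rather than by Zorn's lemma, since Zorn destroys all control of the limit set. You also correctly isolate the crux: for each $g$ in the enumeration one must force $\langle \Delta, g\rangle = \SL_n(\Z)$ while adding only elements whose attracting and repelling data stay inside the confined region $C$.

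But at exactly that point your proof stops. ``Overcoming this requires the full strength of the Margulis--So\u\i fer machinery'' is not an argument, and in fact that machinery alone does not do it: the paper explicitly remarks that these $\SL_n(\Z)$ results rest on ingredients special to higher-rank arithmetic groups. The missing mechanism is the combination of (i) Venkataramana's theorem (Theorem \ref{Thm - Venkataramana}): a Zariski-dense subgroup of $\SL_n(\Z)$ containing a rank-one unipotent $u$ and a unipotent $v$ with $\langle u,v\rangle\cong\Z^2$ has finite index, hence equals $\SL_n(\Z)$ if profinitely dense; and (ii) the Conze--Guivarc'h minimality theorem (Theorem \ref{them - conze}, Corollary \ref{cor - conze}), which lets one place the pair (attracting point, fixed hyperplane) of a rank-one unipotent essentially anywhere, in particular inside $C$ and subject to the constraint $p_1=g\,p_2$, $L_1\ne g\,L_2$ of Lemma \ref{lemma - finding a good pair}. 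Together (Lemma \ref{throwing}) these show that for any $g$ it suffices to adjoin just two high powers of rank-one unipotents, with all their projective data confined to prescribed small neighbourhoods, so that a $\Z^2$ of unipotents inside $\langle \mathcal{S}_+, g\rangle$ certifies $\langle \mathcal{S}_+,g\rangle=\SL_n(\Z)$ while $\mathcal{S}_+$ remains a Schottky system. This is precisely the step your proposal leaves as a black box; without it there is no reason that the ``algebraic steps needed for maximality'' can be realized by confined proximal elements, and the construction does not close. Two smaller points: your opening Baire-category reduction implicitly uses that every orbit of the constructed group accumulates only inside the confined region, which is a property of the Schottky dynamics rather than of the definition of the limit set; and one needs the free-product structure of $\Delta$ (Lemma \ref{lemma - ping pong}) to see that $\Delta\neq\SL_n(\Z)$, so that the enumeration yields maximality rather than the whole group.
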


\begin{theorem}\label{thm trivial} 
Let $n \ge 3$. There exists an infinite index maximal subgroup $M$ of  $\PSL_n(\Z)$  and an element $g \in \PSL_n(\Z)$
such that $M \bigcap gMg^{-1}=\{id\}$.
\end{theorem}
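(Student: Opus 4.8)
The plan is to recast the assertion in dynamical terms and then to produce the required configuration by a ping-pong construction refining the one behind Theorem \ref{thm:MS} and Theorem \ref{thm dense}. First I would use the correspondence between maximal subgroups and primitive actions recorded above: an infinite index maximal subgroup $M$ is precisely the stabilizer of a point $x_0$ in the primitive action of $\Gamma = \PSL_n(\Z)$ on $X = \Gamma/M$. A direct coset computation then gives, for every $g$, that $\Stab_\Gamma(x_0) = M$ and $\Stab_\Gamma(gx_0) = gMg^{-1}$, so $M \cap gMg^{-1}$ is exactly the stabilizer of the ordered pair $(x_0, gx_0)$. Thus the theorem is equivalent to exhibiting a primitive action of $\Gamma$ together with a pair of points whose joint stabilizer is $\{id\}$.

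For the existence and infinite index I would invoke the Margulis--So\u\i fer engine. For $n \ge 3$ the group $\Gamma$ contains an abundance of very proximal elements on $\PP^{n-1}(\R)$ (and, where needed to certify proximality of integer matrices, on the $p$-adic projective spaces), and one builds a profinitely dense free subgroup $F \le \Gamma$ inductively: at each step one adjoins a new very proximal generator, chosen both to surject onto a prescribed finite quotient --- which forces profinite density --- and to have attracting point and repelling hyperplane in general position with respect to the ping-pong sets already in play, which preserves freeness. By Zorn's lemma $F$ is contained in a maximal subgroup $M$, and since $M \supseteq F$ is proper and profinitely dense while any proper finite index subgroup maps onto a proper subgroup of the quotient by its core, $M$ must have infinite index. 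The entire content of the theorem is therefore the trivial intersection property.

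To arrange the trivial intersection I would fix a single element $g$ at the outset and impose a transversality condition between $F$ and $gFg^{-1}$: demand that the attracting/repelling data of the generators of $F$ and of their $g$-conjugates occupy pairwise disjoint regions of $\PP^{n-1}(\R)$, so that $F$ and $gFg^{-1}$ themselves play ping-pong and $\langle F, gFg^{-1}\rangle \cong F \ast gFg^{-1}$; in particular $F \cap gFg^{-1} = \{id\}$. I would also arrange $\langle F, g\rangle = \Gamma$, which forces $g \notin M$ (else $M \supseteq \langle F,g\rangle = \Gamma$), and hence $gMg^{-1} \ne M$.

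The essential obstacle is that $M$ is only known to contain $F$, so the clean relation $F \cap gFg^{-1} = \{id\}$ does not by itself transfer to $M \cap gMg^{-1}$; one must control the maximal overgroup and not merely the free group inside it. Here I would import the sharpened ping-pong of Theorem \ref{thm dense}, whose proof keeps the limit set $\Lambda_M$ of the whole maximal subgroup inside a prescribed nowhere dense attracting region $K \subset \PP^{n-1}(\R)$; correspondingly the limit set of $gMg^{-1}$ lies in $gK$. Choosing $g$ so that $K \cap gK = \emptyset$, any proximal $1 \ne h \in M \cap gMg^{-1}$ would have its attracting fixed point simultaneously in $\Lambda_M \subseteq K$ and in $g\Lambda_M \subseteq gK$, which is impossible; a supplementary argument, carried out at an auxiliary $p$-adic place where the construction is likewise controlled, disposes of the residual non-proximal elements of the intersection. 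I expect this last inheritance step --- promoting the trivial intersection property from $F$ to the a priori uncontrolled maximal subgroup $M$ --- to be the crux of the proof.
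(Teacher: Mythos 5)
You set up the problem correctly: reduce to a pair of points with trivial joint stabilizer, build a profinitely dense free (Schottky) subgroup, pass by Zorn's lemma to a maximal overgroup $M$ of infinite index, and you rightly identify that the entire difficulty is in transferring control from the free group to $M$. But the mechanism you propose for that crux step is not the one that works, and as written it has a genuine gap. You propose to confine the limit set of the \emph{whole} maximal subgroup to a region $K$ with $K\cap gK=\emptyset$, importing Theorem \ref{thm dense} as a black box. Two problems. First, one does not control $M$ by ``keeping its limit set inside $K$ via sharpened ping-pong'': $M$ is produced by Zorn's lemma and a priori contains arbitrary elements. The control in \cite{GM:max} goes the other way, by exclusion: since $M$ is proper and contains the Schottky set $\mathcal S$, one has $M\subseteq\{h:\langle\mathcal S,h\rangle\neq\SL_n(\Z)\}$, and the construction of $\mathcal S$ makes the right-hand side small. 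Second, even granting disjoint limit sets for $M$ and $gMg^{-1}$, such an argument can at best show that $M\cap gMg^{-1}$ carries no proximal or unipotent dynamics, hence is a finite group; it cannot distinguish $\{id\}$ from a nontrivial finite subgroup, because torsion elements of $\PSL(n,\Z)$ contribute nothing to any limit set. Your ``supplementary argument at an auxiliary $p$-adic place'' is left unspecified at exactly the point where the conclusion $=\{id\}$ must be earned.

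The ingredient you are missing is the one the paper explicitly flags: Venkataramana's theorem on commuting unipotents (Theorem \ref{Thm - Venkataramana}), together with Conze--Guivarc'h minimality (Theorem \ref{them - conze}, Corollary \ref{cor - conze}) and the ``throwing'' mechanism of Lemma \ref{throwing}. One enumerates the nontrivial elements $h$ of $\PSL(n,\Z)$ and, at the step devoted to $h$, enlarges the profinitely dense Schottky system of rank-one unipotents by two new elements $v_1,v_2$ whose attracting points and fixed hyperplanes are positioned --- Corollary \ref{cor - conze} provides the freedom to do this while preserving the Schottky conditions --- so that, for a suitable nontrivial word $w$ in $h$ and $g^{-1}hg$, the elements $v_1$ and $w^{-1}v_2w$ are independent commuting unipotents (Lemma \ref{lemma - finding a good pair}). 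Any subgroup containing $\mathcal S$ together with both $h$ and $g^{-1}hg$ then contains a profinitely dense subgroup and a $\Z^2$ of unipotents, hence equals the whole group by Theorem \ref{Thm - Venkataramana}; since $M$ is proper, $h\notin M\cap gMg^{-1}$. The elements commuting with $g$ (for which $w$ would be trivial) are handled by throwing each of them out of $M$ individually via Lemma \ref{throwing}. This exclusion argument treats all elements uniformly, torsion included, and it is the actual content of the proof; the free product decomposition $F\ast gFg^{-1}$ you aim for plays no role.
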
  

\begin{theorem}\label{thm not 2-trans} 
Let $n \ge 3$. There exists a primitive permutation action of $\SL_n(\Z)$ which is not 
2-transitive. 
\end{theorem}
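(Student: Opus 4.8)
The plan is to realise the desired action as $\Gamma \acts \Gamma/\Delta$ for the specific maximal subgroup $\Delta \le \SL_n(\Z)$ supplied by Theorem~\ref{thm dense}, and to use its nowhere-dense limit set to obstruct $2$-transitivity. Write $\Gamma = \SL_n(\Z)$, $X = \PP^{n-1}(\R)$, and let $\Lambda = L(\Delta) \subset X$ be the limit set, which is nonempty (as $\Delta$ is infinite), closed, $\Delta$-invariant, and, by Theorem~\ref{thm dense}, nowhere dense. The first step is to promote $\Lambda$ to a $\Gamma$-equivariant invariant of cosets. Since $\Gamma$ is a lattice in $\SL_n(\R)$, its action on $X = G/P$ is minimal, so it admits no nonempty proper closed invariant subset; hence $\Stab_\Gamma(\Lambda) \neq \Gamma$ (otherwise $\Lambda$ would be such a set), and as $\Delta \le \Stab_\Gamma(\Lambda)$ and $\Delta$ is maximal we get $\Stab_\Gamma(\Lambda) = \Delta$. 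Consequently $g\Delta \mapsto g\Lambda$ is a well-defined, injective, $\Gamma$-equivariant map $\iota : \Gamma/\Delta \hookrightarrow 2^X$.

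Next I would translate $2$-transitivity into a rigidity statement about the translates $g\Lambda$. Recall that $\Gamma \acts \Gamma/\Delta$ is $2$-transitive if and only if $\Gamma$ is transitive on ordered pairs of distinct cosets, equivalently $|\Delta \backslash \Gamma / \Delta| = 2$. Under $\iota$, the symmetric relation ``$g\Lambda \cap h\Lambda \neq \emptyset$'' on distinct translates is $\Gamma$-invariant, since $\Gamma$ acts by homeomorphisms of $X$; so $2$-transitivity forces it to be constant: either every pair of distinct translates meets, or every such pair is disjoint. Equivalently, the truth value of ``$g\Lambda \cap \Lambda \neq \emptyset$'' is the same for all $g \notin \Delta$. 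To conclude it therefore suffices to exhibit both behaviours simultaneously: one coset giving a translate disjoint from $\Lambda$, and one coset giving a translate meeting $\Lambda$.

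For the disjoint translate the idea is contraction dynamics. Because $\Lambda$ is nowhere dense, $X \setminus \Lambda$ contains a ball $B$; since $\Gamma$ is Zariski dense it contains very proximal (contracting) elements by the Margulis--So\u\i fer machinery, and iterating one whose attracting point lies in $B$ shrinks $\Lambda$ into $B$, producing $g$ with $g\Lambda \subset B$, hence $g\Lambda \cap \Lambda = \emptyset$ and $g \notin \Delta$. For the intersecting translate the cleanest route is to find $\gamma \notin \Delta$ for which $\Delta \cap \gamma \Delta \gamma^{-1}$ is infinite: then this subgroup has nonempty limit set contained in $\Lambda \cap \gamma\Lambda$, so $\gamma\Lambda$ meets $\Lambda$. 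This is where the explicit ping-pong structure underlying Theorem~\ref{thm dense} enters, since it is what controls how $\Lambda$ and its translates can overlap.

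The hard part is this last step --- arranging, and verifying, that both intersection behaviours genuinely occur. The disjoint case carries the technical subtlety of controlling the repelling hyperplane of the contracting element relative to $\Lambda$ (a nowhere-dense set may still meet every hyperplane), which is exactly what the contracting-element technology of Abels--Margulis--So\u\i fer is designed to handle; the intersecting case requires knowing that $\Delta$ fails to be malnormal in the strong sense above, which one reads off from its construction. Once both are in hand, the constancy forced by $2$-transitivity is violated, so $|\Delta \backslash \Gamma / \Delta| \ge 3$; as $\Delta$ is maximal, the action $\Gamma \acts \Gamma/\Delta$ is primitive but not $2$-transitive, proving Theorem~\ref{thm not 2-trans}.
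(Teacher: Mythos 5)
Your overall scheme---encode each coset $g\Delta$ by the translate $g\Lambda$ of the limit set, note that ``$g\Lambda\cap h\Lambda\ne\emptyset$'' is a $\Gamma$-invariant relation on ordered pairs of distinct cosets which $2$-transitivity would force to be constant, and then exhibit both truth values---is legitimate, and the reductions you do carry out (minimality of $\Gamma\acts\PP^{n-1}(\R)$ giving $\Stab_\Gamma(\Lambda)=\Delta$, the invariance and the forced constancy of the relation) are correct. The genuine gap is that neither of the two verifications on which the whole argument rests is performed, and the first one cannot be extracted from the statement of Theorem~\ref{thm dense} alone. For $n\ge 3$, nowhere density of $\Lambda$ does not supply a projective hyperplane inside $\PP^{n-1}(\R)\setminus\Lambda$: a closed nowhere dense set may contain a hyperplane outright (a hyperplane is itself nowhere dense), and then \emph{every} translate $g\Lambda$ meets $\Lambda$, since two hyperplanes in $\PP^{n-1}$ always intersect when $n\ge 3$; in that case your relation is constantly true and the method yields no conclusion. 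So the ``disjoint translate'' step genuinely needs information about how $\Delta$ was built (e.g.\ that $\Lambda$ sits inside a finite union of small balls missing some hyperplane, so that a very proximal element with repelling neighbourhood in $\PP^{n-1}(\R)\setminus\Lambda$ exists), which you acknowledge but do not supply. The ``intersecting translate'' step likewise defers to unspecified features of the construction, and the auxiliary facts invoked there (that an infinite subgroup has a nonempty limit set contained in $\Lambda\cap\gamma\Lambda$) are not automatic.

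For comparison: the survey itself does not reproduce a proof of this theorem (Section~4 only proves Theorem~\ref{thm counting}; the rest is quoted from \cite{GM:max}), but the tools it assembles point to an algebraic version of your ``invariant of pairs'' idea that avoids both difficulties. In a $2$-transitive action all two-point stabilizers $\Delta\cap g\Delta g^{-1}$, $g\notin\Delta$, are conjugate. Theorem~\ref{thm trivial} provides a maximal $M$ and a $g$ with $M\cap gMg^{-1}=\{1\}$, while the construction places a rank-one unipotent $u\in\mathcal{U}$ inside $M$. By Lemma~\ref{lemma - structure of unipotent} there is $v\in\mathcal{U}$ with $p_v=p_u$ and $L_v\ne L_u$; such a $v$ commutes with $u$, and it cannot lie in $M$, since otherwise Lemma~\ref{lemma - finding a good pair} and Theorem~\ref{Thm - Venkataramana} would force $M$ to have finite index. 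Hence $u\in M\cap vMv^{-1}\ne\{1\}$ with $v\notin M$, giving two non-conjugate two-point stabilizers and ruling out $2$-transitivity. The same unipotent $u$ would also rescue your ``intersecting'' case geometrically (since $p_u\in\Lambda$ and $vp_u=p_u$), but you would still need the construction, not merely the statement of Theorem~\ref{thm dense}, to certify a hyperplane disjoint from $\Lambda$ for the ``disjoint'' case.
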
 

\begin{rem}\normalfont These theorems remain true also for 
$\SL_n(\Q)$ instead of $\SL_n(\Z)$.
\end{rem}

\begin{rem}\normalfont
Recall that Theorem \ref{thm:MS} is much more general. It holds for any finitely generated non-virtually-solvable linear group $\Gamma$. However the proof of the last results rely on special properties of $\SL_n(\Z),~n\ge 3$. In particular
one important ingredient here is the beautiful result of Venkataramana about commuting unipotents, Theorem \ref{Thm - Venkataramana}. Another ingredient is the result of Conze and Guivarc'h, Theorem \ref{them - conze}. Some of these results can be extended to the class of arithmetic groups of higher $\Q$-rank.
\end{rem}

\subsection{Highly transitive actions}
Over the years many authors generalized the results and the methods of  \cite{MS:ann,MS:first,MS:Maximal}. In Section \ref{sec:ht} we describe two major directions which eventually converged together in a very nice way. The first direction involves implementing the methods of \cite{MS:Maximal} to various {\it{linear-like}} setting. Notable examples include the works of I. Kapovich \cite{Kapovich:Frattini} for subgroups of hyperbolic groups and of Ivanov \cite{Ivanov:MCG} on mapping class groups. More specifically than just linear-like, these examples exhibit boundary dynamics that is closer in nature to that of subgroups of rank-1 Lie groups. From the group theoretic point of view this has the effect that often after ruling out obvious obstructions, all groups in question are of almost simple type and primitive. 

The second direction involves construction $k$-transitive or highly transitive permutation representations which are A-priori harder to construct. Originally the predominant feeling highly transitive groups are much rarer. However, over a period spanning a few decades, wider and more elaborate constructions of highly transitive groups were given. Some notable papers in this direction are \cite{MD_free_HT, Dixon:free_HT, hic:ht, gun:ht, Kit:ht, MS:ht, chay:ht, GG:ht, HO:ht}. The paper of Hull and Osin establishes the following: 
\begin{theorem}[\cite{HO:ht} Theorem 1.2] \label{thm:HO} Any countable acylindrically hyperbolic group with no finite normal subgroups is highly transitive 
\end{theorem}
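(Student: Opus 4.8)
The plan is to recast high transitivity as a density statement and then feed the hyperbolic geometry into it. Fix a countably infinite set, identified with $\N$. A faithful action of $G$ on $\N$ is $k$-transitive for every $k$ precisely when the image of $G$ in $\Sym(\N)$ is dense for the topology of pointwise convergence, so it suffices to produce a faithful homomorphism $G \hookrightarrow \Sym(\N)$ with dense image. The hypothesis is clearly necessary: high transitivity implies $2$-transitivity, hence primitivity, and in a primitive action every nontrivial normal subgroup acts transitively; since a finite group cannot act transitively on an infinite set, a finite normal subgroup must act trivially, contradicting faithfulness unless it is trivial. The content is the converse.

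First I would set up a Baire-category framework. Endow $\Hom(G,\Sym(\N))$ with the topology induced from $\Sym(\N)^G$; the homomorphism relations are closed, so this is a Polish, hence Baire, space. Faithfulness is the $G_\delta$ condition $\bigcap_{g \neq e}\{\varphi : \varphi(g) \neq \mathrm{id}\}$, and high transitivity is the $G_\delta$ condition $\bigcap_{k}\bigcap_{(\bar a,\bar b)}\{\varphi : \exists g,\ \varphi(g)\bar a = \bar b\}$, the inner intersection running over the countably many pairs of $k$-tuples of distinct points. Thus the set of faithful highly transitive actions is $G_\delta$, and by the Baire property it is nonempty once each of these countably many conditions is dense in a suitable nonempty closed subspace. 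Equivalently, and more concretely, I would build the action as a direct limit of finite partial data: process in turn each faithfulness requirement (``$g$ must move a point'') and each transitivity requirement (``the distinct tuple $\bar a$ must be sent to $\bar b$''), at every stage extending a finite partial bijection to one realized by a genuine element of $G$.

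The geometry enters precisely in the extension step, and this is where acylindrical hyperbolicity with trivial finite radical is used. By Dahmani--Guirardel--Osin, such a $G$ contains a hyperbolically embedded nonabelian free subgroup $F \hookrightarrow_h G$; concretely this yields an abundant, ``independent'' supply of loxodromic elements together with a strong small-cancellation calculus over $G$, in which one may prescribe the behaviour of finitely many elements while guaranteeing that finitely many other chosen elements survive noncollapsed. Using this, each transitivity requirement can be met by an actual group element inducing the prescribed finite partial permutation, chosen so as to avoid collapsing the finitely many nontrivial elements already committed as faithfulness witnesses. Passing to the limit produces the faithful, highly transitive action.

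The hard part will be exactly this simultaneous control: transitivity pushes one toward large, symmetric-group-like behaviour, identifying and freely permuting points, while faithfulness forbids any nontrivial element from ever dying, and reconciling the two at every finite stage is delicate. The resolution is quantitative and geometric---one realizes the next required partial permutation by a loxodromic (WPD) element built from boundary data disjoint from that of the finitely many faithfulness witnesses accumulated so far, so that the small-cancellation estimates keep those witnesses nontrivial. Making this avoidance precise, i.e. producing at each stage a suitable element realizing the required partial permutation without the forbidden collapses, is the technical heart, and it is precisely what the independence of loxodromic families and the windmill/small-cancellation machinery of Dahmani--Guirardel--Osin are designed to supply.
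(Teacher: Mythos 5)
This survey does not actually prove Theorem \ref{thm:HO}: it is quoted verbatim from Hull--Osin and used as a black box, so there is no in-paper argument to compare yours against line by line. The closest thing the paper offers is its own Theorem \ref{thm:rank_one_ht}, whose proof borrows from Hull--Osin exactly the group-theoretic half of your outline --- the reduction of high transitivity to a property of a point stabilizer $\Delta$, formalized via ``possible partial permutations'' and Lemma \ref{lem:PPP} --- and then replaces the small-cancellation machinery by ping-pong on the limit set in $\PP(k^2)$. Your sketch is faithful to the actual Hull--Osin strategy: reduce to realizing, one at a time, countably many legitimate partial permutations by genuine group elements while protecting countably many faithfulness witnesses, and use hyperbolically embedded free subgroups (Dahmani--Guirardel--Osin) to supply the elements. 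The Baire-category packaging at the start is a correct but essentially cosmetic reformulation; as you note, density of each condition fails in the full space $\Hom(G,\Sym(\N))$, and identifying a closed subspace where it holds is the same problem as the inductive construction, so that paragraph buys nothing.

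The genuine gap is the one you flag yourself: the extension step. Everything hinges on a lemma of the form ``given a subgroup $H$ built so far (with suitable structure, e.g.\ hyperbolically embedded and free), and a partial permutation legitimate modulo $H$, there exists $g\in G$ realizing it such that $\langle H,g\rangle$ retains that structure and the coset space does not collapse.'' Saying that the windmill/small-cancellation machinery ``is designed to supply'' this is not a proof; in Hull--Osin this is where all the work lives (their extension results for hyperbolically embedded subgroups, e.g.\ that one can enlarge $H$ to $H\ast\langle g\rangle\hookrightarrow_h G$ with $g$ chosen in prescribed double cosets), and the bookkeeping that makes ``legitimate modulo $H_{\ell-1}$'' propagate to ``realized modulo $H_\ell$'' and survive the direct limit is nontrivial. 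Note also that the construction is carried out on the stabilizer subgroup, not on finite partial bijections of $\N$ directly; your phrasing blurs this, and the subgroup formulation is what makes the legitimacy/realization dichotomy (and hence the induction) well posed. So: right skeleton, necessity direction correct, but the theorem's technical heart is asserted rather than proved.
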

The family of acylindrically hyperbolic groups is very wide, encompassing within it most groups that could be considered of as {\it{rank one}}, even in a weak sense of the word. Thus, while Theorem \ref{thm:HO} does not imply the Theorem \ref{thm:MS} it ties together the strings, establishing a very strong form of this theorem for a wide range of rank one examples, thereby generalizing the theorems mentioned above. To emphasise this we quote from their paper a few specific situations where their theorem applies. 
\begin{itemize}
\item A countable group relatively hyperbolic to a collection of proper subgroups is highly transitive iff it is not virtually cyclic and has no finite normal subgroups. 
\item $\Mod(\Sigma_{g,n,p})$ the mapping class group of a compact orientable surface, of genus $g$, $p$-punctures and $n$-boundary components, is highly transitive if and only if $n=0$ and $3g+p \ge 5$. 
\item $\Out(F_n)$ is highly transitive iff $n \ge 3$. 
\item $\pi_1(M)$ where $M$ is a compact irreducible 3-manifold is highly transitive iff it is not virtually solvable and $M$ is not Seifert fibered. 
\item A right angled Artin group is highly transitive if and only if it is non-cyclic and directly indecomposable. 
\end{itemize}
We can summarize all of the above by saying that in each and every one of these situations, a subgroup is highly transitive if and only if it is of almost simple type. 


Inspired by the work of Hull and Osin, and applying the theory of linear groups, we establish the following result.  

\begin{theorem} \label{thm:rank_one_ht} Let $k$ be a local field, and $\Gamma < \SL_2(k)$ a center free unbounded countable group. Then the following conditions are equivalent for $\Gamma$. 
\begin{enumerate}
\item \label{r1_ast} $\Gamma$ is of almost simple type
\item \label{r1_zd} $\Gamma$ is Zariski dense 
\item \label{r1:nvs} $\Gamma$ is not virtually solvable
\item \label{r1_ht}$\Gamma$ is highly transitive
\end{enumerate}
\end{theorem}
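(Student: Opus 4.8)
The plan is to prove the algebraic equivalences $(1)\Leftrightarrow(2)\Leftrightarrow(3)$ by soft arguments resting on the subgroup structure of $\SL_2$, and then to close the loop $(1)\Rightarrow(4)\Rightarrow(1)$, the implication $(1)\Rightarrow(4)$ being the substantial one. For $(2)\Leftrightarrow(3)$ I would invoke the classification of Zariski-closed subgroups of $\SL_2$: a proper one is finite, or contained in a Borel subgroup (hence solvable), or in the normalizer of a maximal torus (hence virtually abelian). Since $\Gamma$ is unbounded it is infinite, so its Zariski closure is infinite, and that closure therefore equals $\SL_2$ precisely when $\Gamma$ is not virtually solvable. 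For $(2)\Rightarrow(1)$ I would use that the only normal algebraic subgroups of $\SL_2$ are $\trivgp$, the center $\{\pm I\}$, and $\SL_2$ itself: the Zariski closure of a nontrivial $N\lhd\Gamma$ is normal in $\SL_2$, so either $N\subseteq\{\pm I\}$—impossible, since those elements are central in $\Gamma$ and $\Gamma$ is center free—or $N$ is Zariski dense. Thus \emph{every} nontrivial normal subgroup is Zariski dense, which at once gives that $\Gamma$ has no nontrivial finite normal subgroup, and, because centralizers are Zariski closed with $Z_{\SL_2}(\SL_2)=\{\pm I\}$, that two nontrivial commuting normal subgroups cannot coexist; that is exactly almost simple type. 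Finally $(1)\Rightarrow(3)$ is the contrapositive of the fact that an infinite virtually solvable group has a nontrivial abelian normal subgroup: a finite-index normal solvable $S\neq\trivgp$ has a last nontrivial derived term, which is abelian and normal in $\Gamma$, contradicting almost simple type.

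For $(4)\Rightarrow(1)$ I would argue by contradiction. A highly transitive faithful action is $2$-transitive, hence primitive, and it is on an infinite set since $\Gamma$ is infinite. If $\Gamma$ were not of almost simple type, Theorem~\ref{thm:non_ast1} would force this action to be the natural affine action of a splitting $\Gamma=\Delta\ltimes M$ on the infinite group $M$. But such an affine action carries a nontrivial $\Gamma$-invariant relation on quadruples: the ``parallelogram'' relation $x_1x_2^{-1}=x_3x_4^{-1}$ when $M$ is abelian, and the relation ``$x_1^{-1}x_2$ is conjugate to $x_3^{-1}x_4$'' when $M$ is nonabelian. Both are preserved by the left-translation action of $M$ and the conjugation action of $\Delta$, yet are nontrivial because $M$ is infinite, so each obstructs $4$-transitivity and contradicts high transitivity. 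Hence $\Gamma$ is of almost simple type.

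The main direction $(1)\Rightarrow(4)$ (equivalently $(3)\Rightarrow(4)$) is the heart of the matter and is where the rank-one geometry of $\SL_2(k)$ enters. I would realize $\SL_2(k)$ as acting on a rank-one space—the real or complex hyperbolic space (the hyperbolic plane for $k=\R$, hyperbolic $3$-space for $k=\C$) when $k$ is archimedean, and the Bruhat--Tits tree when $k$ is non-archimedean—an action which is acylindrical. Restricting to $\Gamma$, the hypotheses guarantee a non-elementary action: unboundedness yields a loxodromic isometry, and non-virtual-solvability (through Tits-style ping-pong coming from Zariski density) yields two independent loxodromics and rules out virtual cyclicity. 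With acylindricity this makes $\Gamma$ acylindrically hyperbolic, while almost simple type supplies the remaining hypothesis that $\Gamma$ has no nontrivial finite normal subgroup, so Theorem~\ref{thm:HO} of Hull--Osin delivers high transitivity.

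The main obstacle is precisely the verification of these geometric hypotheses for an arbitrary \emph{countable}, not necessarily finitely generated, Zariski-dense unbounded subgroup: one must produce independent loxodromic elements and control acylindricity without finite generation, and one must navigate the positive-characteristic torsion phenomena that are exactly why the statement is restricted to unbounded (non-torsion) groups. An alternative to invoking Hull--Osin is a direct construction of the action by ping-pong: for each finite configuration of points on $\PP^1(k)$ and each prescribed permutation, one builds a group element with controlled very-proximal dynamics realizing it, exploiting that Zariski density makes the attracting/repelling pairs of conjugates of a fixed proximal element abundant and in general position, and then one assembles these elements into a faithful highly transitive limit action.
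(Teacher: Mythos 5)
Your handling of the equivalences $(1)\Leftrightarrow(2)\Leftrightarrow(3)$ and of $(4)\Rightarrow(1)$ is sound and essentially matches the paper (which gets the first three from Lemma \ref{lem:linear_ast} plus the fact that proper connected algebraic subgroups of $\SL_2$ are solvable, and the last from Proposition \ref{prop:nec} plus the observation that affine and diagonal type groups are never highly transitive). The problem is your primary route for the main implication $(1)\Rightarrow(4)$, and it is not merely the technical obstacle you flag: it is a dead end. The theorem concerns an arbitrary countable unbounded subgroup $\Gamma<\SL_2(k)$, which will typically be \emph{non-discrete}; for a non-discrete $\Gamma$ the action on $\mathbb{H}^2$, $\mathbb{H}^3$ or the Bruhat--Tits tree is never acylindrical as an action of the abstract group (elements of $\Gamma$ arbitrarily close to the identity almost-fix every pair of far-apart points, so the finiteness in the definition of acylindricity fails for every $R$). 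Worse, groups covered by the theorem can fail to be acylindrically hyperbolic for \emph{any} action: acylindrically hyperbolic groups are SQ-universal by Dahmani--Guirardel--Osin, whereas e.g. $\Gamma=\SL_2(\F_2(t))=\PSL_2(\F_2(t))<\SL_2(\F_2((t)))$ is a center-free, unbounded, countable, non-torsion, Zariski-dense simple group that does not even contain $F_2\times F_2$ (centralizers of nontrivial elements of $\PSL_2$ over a field are virtually abelian), hence is not SQ-universal and not acylindrically hyperbolic --- yet the theorem asserts it is highly transitive. So Theorem \ref{thm:HO} cannot be the engine here; the paper says as much when it announces that it keeps only Hull--Osin's \emph{characterization of point stabilizers} and replaces the small-cancellation machinery by boundary dynamics.

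What the paper actually does for $(1)\Rightarrow(4)$ is close in spirit to the one-sentence ``alternative'' you mention, but the content is in the details you omit. One does not build a highly transitive action on $\PP^1(k)$; one builds the \emph{stabilizer}: an infinitely generated Schottky subgroup $\Delta<\Gamma$ playing precise ping-pong on the limit set $L(\Gamma)\subset\PP(k^2)$, with genuine fundamental domains for $\Delta\acts L(\Gamma)\setminus L(\Delta)$ (Lemmas \ref{lem:limit}, \ref{lem:hyp_prec}, \ref{lem:precise_ping_pong}). The induction alternates two kinds of steps: even steps realize every ``possible partial permutation'' of cosets that is legitimate modulo the current stage, which by Lemma \ref{lem:PPP} forces $\Gamma\acts\Gamma/\Delta$ to be highly transitive; odd steps place an element of $\Delta$ in each coset $\gamma\llangle n\rrangle$ of each normal closure of a conjugacy class, making $\Delta$ prodense and in particular core-free, which is what makes the action faithful. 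Your sketch contains neither the coset-realization criterion (high transitivity is about $\Gamma/\Delta$, not about configurations in $\PP^1$) nor any mechanism for faithfulness, and these are precisely the points where the work lies.
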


We note that this is the only result in this survey paper which has not been proved before.

From a group theoretic point of view we use Hull-Osin's characterization of the point stabilizers in highly transitive actions. From the geometric point of view, we use the topological dynamics of the boundary action, instead of the small cancelation type methods used in \cite{HO:ht}. 

\subsection{Other geometric settings}
As seen so far the ideas behind the proof of Theorem \ref{thm:MS} have been substantially generalized to many ``linear-like'' settings, by which we refer very loosely to the situation where there is a group action with a rich enough proximal dynamics. The outcome of the theory in many of these settings is that all groups of almost simple type are primitive. Sometimes, notably in negatively curved type settings much stronger transitivity properties are established by similar techniques. 

In Section \ref{sec:nonlin} we offer just a glimpse, without proofs, into some fascinating works outside the linear-like setting. Here we encounter completely different methods and different types of behaviour. 

The research of the first two authors was partially funded by Israel Science Foundation grant ISF 2919/19. 

\section{Faithful primitive actions of countable groups} \label{sec:nec}

\subsection{Necessary conditions and group topologies on \(\Gamma\)} \label{sec:necessary}
The goal of this section is to establish Theorems \ref{thm:non_ast1} and \ref{thm:semidir1} from the introduction. A more detailed version of these, that appears here as Proposition \ref{prop:nec}, can be thought of as a summary of those implications in Theorem \ref{thm:Linear} which hold for general countable groups without any additional assumptions. It provides the classification of countable primitive groups into three disjoint classes: primitive groups of affine, diagonal and of almost simple types. The structure of primitive groups of affine or diagonal type is well understood. But the structure of primitive groups of almost simple type remains mysterious, this is the class of groups for which Question \ref{q:main} is the most interesting. We start with the proof of Theorem \ref{thm:semidir1}, characterizing primitivity and faithfulness for the standard actions of semidirect products. 
\begin{proof}[Proof of Theorem \ref{thm:semidir1}]
Consider the natural action of a semidirect product $\Gamma = \Delta \ltimes M \action M$. One easily verifies that $\Delta = \Stab_{\Gamma}(\{e\})$ and hence the kernel of the action is $Z_M(\Delta) = \{\delta \in \Delta \ | \ \delta m \delta^{-1}=m, \ \forall m \in M\}$. Which proves the first statement. If $\trivgp \neq K \lneqq M$ is normalized by $\Delta$ then $\Delta < \Delta \ltimes K < \Gamma$ so $\Delta$ is not maximal and the standard action fails to be primitive. Conversely assume that $\Delta$ fails to be a maximal with $\Delta \lneqq \Sigma \lneqq \Gamma$ an intermediate subgroup. Using the unique product decomposition $\Gamma = \Delta M$, we find that $\trivgp \ne \Sigma \bigcap M$  is a nontrivial proper subgroup of $M$ normalized by $\Delta$. 
\end{proof}
This gives rise to a very explicit description of primitive groups of affine and of diagonal type.
It follows that in a primitive group of affine or diagonal type the normal subgroup $M$ must be characteristically simple. When $M$ is abelian and countable this means that $M$ is the additive group of a countable vector space, i.e. either $M \cong \F_p^{\infty}$ or $M \cong \Q^n, n \in \N \bigcup \{\infty\}$. In this case $\Delta$ can be identified with an irreducible subgroup of $\GL(M)$. 

When $M$ is nonabelian it is center free, so the natural map $\iota: M \rightarrow \Inn(M) < \Aut(M)$ is injective. Since there are no nontrivial $\Delta$-invariant subgroups $\Delta \bigcap \Inn(M)$ is either trivial or equal to $\Inn(M)$. In the first possibility $\Delta$ commutes with $\Inn(M)$ contradicting the fact that there are no $\Delta$-invariant subgroups in $M$; hence $\Inn(M) < \Delta$. Recall that we identified the action of $M$ with its left action on itself. Now that we have realized the inner automorphisms of $M$, as a subgroup of $\Delta$ we can distinguish another subgroup of $\Gamma$, which is isomorphic to $M$ and commutes with it, namely the action of $M$ on itself from the right: 
$$N = \{\iota(m^{-1}) m \ | \ m \in M\} < \Gamma.$$
This is of course the source of the name ``diagonal type''. Indeed we have constructed here the diagonal action $M \times M \action M$ described in Example \ref{eg:commensurable} as a subaction of any primitive group of diagonal type. We turn to the proof of Proposition \ref{prop:nec} which is a more detailed version of Theorem \ref{thm:non_ast1} from the introduction. 
\begin{proposition} \label{prop:nec}
Let $\Gamma \action \Omega$ be a faithful primitive action of a countably infinite group on a set. Fix a basepoint $\omega_0 \in \Omega$ and let $\Delta = \Gamma_{\omega_0}$ be its stabilizer. Then every nontrivial normal subgroup of $\Gamma$ is infinite. Moreover $\Gamma$ falls into precisely one of the following three categories:
\begin{enumerate} 
\item \label{itm:is_ast} Either $\Gamma$ is primitive of almost simple type.  
\item \label{itm:affine} Or $\Gamma$ is primitive of affine type. In this case $\Gamma = \Delta \ltimes V$ with $V$ a countable vector space over a prime field $F$ (possibly $F=\Q$) and $\Delta$ an irreducible subgroup of $\GL_F(V)$. The given action is equivalent to the standard affine action of $\Gamma$ on $V$.  
\item \label{itm:diag} Or $\Gamma$ is primitive of diagonal type. In this case $\Gamma = \Delta \ltimes M$ with $M$ a nonabelian, characteristically simple group. $\Inn(M) < \Delta < \Aut(M)$ and $M$ has no nontrivial subgroups that are invariant under the $\Delta$-action. Again the given action in this case is equivalent to the standard action of $\Gamma$ on $M$. 
\end{enumerate}

Conversely if $\Gamma$ falls into categories (\ref{itm:affine}) or (\ref{itm:diag}) then its natural action is faithful and primitive. 
\end{proposition}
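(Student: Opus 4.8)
The plan is to dispatch the converse immediately and to spend the effort on the classification. For the converse, if $\Gamma$ is of affine or diagonal type then by definition $M$ is characteristically simple, admits no nontrivial proper $\Delta$-invariant subgroup, and satisfies $Z_M(\Delta) = \trivgp$; Theorem \ref{thm:semidir1} then gives at once that the standard action $\Gamma \action M$ is faithful and primitive. For the forward direction I would first record that the action is quasiprimitive: for $N \lhd \Gamma$ the $N$-orbits form a $\Gamma$-invariant partition of $\Omega$, which by primitivity is trivial, so $N$ either fixes every point — and is trivial, by faithfulness — or is transitive, and hence infinite since $\Omega$ is infinite. This proves the first assertion, and because $\Gamma$ thereby has no nontrivial finite normal subgroup, the trichotomy collapses to a dichotomy: either $\Gamma$ is of almost simple type, landing in case (\ref{itm:is_ast}) since it is primitive by hypothesis, or there are nontrivial $M, N \lhd \Gamma$ with $[M,N] = \trivgp$.

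In the second case the engine is the elementary fact that the centralizer in $\Sym(\Omega)$ of a transitive group acts freely. Since $M$ and $N$ are transitive and centralize each other, $N \le C_\Gamma(M)$ is free and transitive, hence regular, and symmetrically $M$ is regular. Identifying $\Omega$ with $M$ through $m \mapsto m \omega_0$, regularity of $M$ yields $\Delta \cap M = \trivgp$ and $\Gamma = \Delta M$, so $\Gamma = \Delta \ltimes M$; a one-line check shows $\delta \in \Delta$ acts by $m \mapsto \delta m \delta^{-1}$, so the given action is exactly the standard one. Re-applying Theorem \ref{thm:semidir1} to this standard action now forces $M$ to have no nontrivial proper $\Delta$-invariant subgroup — in particular $M$ is characteristically simple — and forces $Z_M(\Delta) = \trivgp$.

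It remains to split on commutativity of $M$. If $M$ is abelian I would observe that its torsion subgroup, its $p$-primary components, and the characteristic subgroups $\{m : m^p = e\}$ and $\{m^p : m \in M\}$ are all $\Delta$-invariant; the absence of proper invariant subgroups then forces $M$ to be either elementary abelian or torsion-free divisible, i.e. a countable vector space $V$ over a prime field $F$ with $\Delta \le \GL_F(V)$ irreducible, which is case (\ref{itm:affine}). If $M$ is nonabelian, then $Z(M)$ is characteristic and proper, hence trivial, so $M \cong \Inn(M)$; realizing $N = C_\Gamma(M)$ as the right translations of $M$, each inner automorphism factors as a left times a right translation, lies in $\Gamma$, and fixes $\omega_0$, giving $\Inn(M) \le \Delta$, while faithfulness embeds $\Delta$ into $\Aut(M)$ — this is case (\ref{itm:diag}). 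Mutual exclusivity is then clear: cases (\ref{itm:affine}) and (\ref{itm:diag}) possess commuting nontrivial normal subgroups and so are not of almost simple type, and they are separated from each other by the abelian versus nonabelian nature of their minimal normal subgroups.

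The step I expect to be the crux is the middle one: showing that the commuting pair $M, N$ must act regularly and that the induced semidirect product reproduces precisely the standard action. Everything there rests on the semiregularity of the centralizer of a transitive group together with careful bookkeeping of left translations, right translations, and inner automorphisms inside $\Gamma$; by contrast the abelian structure theory and the exclusivity verifications are routine.
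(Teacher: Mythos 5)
Your proposal is correct and follows essentially the same route as the paper: quasiprimitivity gives that nontrivial normal subgroups are transitive (hence infinite), the failure of almost simplicity yields a commuting pair of transitive normal subgroups whose point stabilizers must be trivial (your appeal to semiregularity of the centralizer of a transitive group is exactly the computation $M_{n\omega_0}=nM_{\omega_0}n^{-1}=M_{\omega_0}$ that the paper performs inline), and the identification $\Omega\cong M$ produces the standard affine action of $\Delta\ltimes M$. The only difference is cosmetic: you spell out the structure theory of the characteristically simple abelian $M$ and the realization of $\Inn(M)$ inside $\Delta$, details the paper delegates to the discussion preceding the proposition.
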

\begin{proof}
As the action is faithful, $\Omega$ must be infinite so it is absurd that a finite normal subgroup would act transitively. If $\Gamma$ fails to be of almost simple type then we can find $\trivgp \ne M, N \lhd \Gamma$ with $[M,N]=\trivgp$. Both normal subgroups must act transitively on $\Omega$ since the action is primitive and faithful. But $M_{n \omega_0} = n M_{\omega_0} n^{-1} = M_{\omega_0}, \ \forall n \in N$ so that the stabilizer $M_{\omega_0}$ will fix all of $N \omega_0 = \Omega$ which implies $M_{\omega_0} = \trivgp$; by the faithfulness of the action. Thus $M$ acts regularly on $\Omega$. Identifying $\Omega$ with $M$ via the orbit map $m \mapsto m \cdot \omega_0$ it is now routine to verify that $\Gamma = \Delta \ltimes M$ and that the given action of $\Gamma$ on $\Omega$ is equivalent to the standard affine action of this semidirect product. The question whether the group is of affine or diagonal type now depends only on the whether $M$ is abelian or not. The detailed description of the structure of the group in these two cases follows from our discussion above.

Let us just note that, just like $M$, the group $K = M \bigcap N$ is a normal subgroup commuting with $N$. So, either $K = \trivgp$ or $K$ too acts regularly, by the same argument in which case $M = N$. These two cases of course correspond to the diagonal and the affine cases respectively.  
\end{proof}

This theorem highlights the family of {\it{almost simple groups}} as the family for which Question  \ref{q:main} is interesting. In all other cases the existence question of a faithful primitive action is easily resolved. Proposition \ref{prop:nec} is very similar in its structure to our main Theorem \ref{thm:Linear}. It basically summarizes all the ``easy'' implications of that theorem that do not require the linearity assumption. The remaining implication does not work without the linearity as the following example shows. Section \ref{sec:nonlin} will be dedicated to a more comprehensive treatment of such examples. 
\begin{theorem}[\cite{per:max}]
The first Grigorchuk group is an example of a group that is of almost simple type but does not admit any infinite primitive action. 
\end{theorem}

\subsection{The normal topology}
We like to think of the definition of groups of almost simple type in topological terms. Let 
$$\Nc(\Gamma) = \left \{\trivgp \ne N \lhd \Gamma \right \}$$ be the collection of all nontrivial normal subgroups. If $\Gamma$ is of almost simple type every $N \in \Nc(\Gamma)$ is infinite and this family is closed under intersections. Clearly it is invariant under conjugation. Thus it forms a basis of identity neighborhoods for a group topology on $\Gamma$.
\begin{definition}
The {\it{normal topology}} $\tau_{N}$, on a countable group of almost simple type $\Gamma$ is the topology obtained by taking $\Nc(\Gamma)$ as a basis of open neighborhoods of the identity. 
\end{definition} 
\begin{proposition}
The normal topology is second countable. It is finer\footnote{Possibly the two topologies are equal, this happens exactly when the group admits the Margulis normal subgroup property.} than the profinite topology. It is Hausdorff if and only if $\Ncore(\Gamma) = \bigcap_{N \in \Nc(\Gamma)} N = \trivgp$.
\end{proposition}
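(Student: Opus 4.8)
The plan is to verify the three assertions separately, in each case reducing the question to a statement about the declared base $\Nc(\Gamma)$ of identity neighbourhoods and then using the fact that, in any group topology, left translations are homeomorphisms, so that the whole topology is controlled by what happens at the identity. In particular, to get a countable base for the whole space it suffices to produce a countable base of neighbourhoods of $e$ and translate by the (countably many) elements of $\Gamma$.

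For second countability the subtlety, which I regard as the only genuinely non-formal point in the proposition, is that a countable group may carry uncountably many normal subgroups, so one cannot simply assert that $\Nc(\Gamma)$ is countable. Instead I would extract a countable cofinal subfamily using normal closures. For each nontrivial $g \in \Gamma$ let $\langle\langle g\rangle\rangle$ denote the smallest normal subgroup containing $g$; this is again a nontrivial normal subgroup, hence a member of $\Nc(\Gamma)$ and thus open. As $\Gamma$ is countable there are only countably many such subgroups. The key observation is that this subfamily is already a neighbourhood base at the identity: any $N \in \Nc(\Gamma)$ is nontrivial, so it contains some $g \neq e$, and normality of $N$ forces $\langle\langle g\rangle\rangle \subseteq N$. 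Hence $\{\langle\langle g\rangle\rangle : g \in \Gamma \setminus \{e\}\}$ is a countable base of identity neighbourhoods, and translating yields a countable base for the topology.

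For the comparison with the profinite topology I would first record that a nontrivial group of almost simple type contains no nontrivial finite normal subgroup, and so is infinite (otherwise $\Gamma \lhd \Gamma$ would be such a subgroup). Consequently every finite-index normal subgroup $N_0 \lhd \Gamma$ is itself infinite, in particular nontrivial, so $N_0 \in \Nc(\Gamma)$ and is therefore $\tau_N$-open. Since the cosets of finite-index normal subgroups form a base for the profinite topology, and every finite-index subgroup contains its normal core, it follows that every profinite-open set is $\tau_N$-open. This is exactly the assertion that $\tau_N$ is finer than the profinite topology.

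Finally, for the Hausdorff criterion I would invoke the standard fact that a topological group is Hausdorff precisely when $\{e\}$ is closed, and then compute $\overline{\{e\}}$. Because $\Nc(\Gamma)$ is a base of identity neighbourhoods consisting of subgroups, a point $x$ lies in $\overline{\{e\}}$ iff every basic neighbourhood $x N$ contains $e$, i.e.\ iff $x^{-1}$ lies in every $N \in \Nc(\Gamma)$; since each $N$ is a subgroup this is equivalent to $x \in \bigcap_{N \in \Nc(\Gamma)} N = \Ncore(\Gamma)$. Thus $\overline{\{e\}} = \Ncore(\Gamma)$, and the topology is Hausdorff iff this closure is trivial, as claimed. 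The work here is purely the bookkeeping of the usual topological-group identities, so the passage to normal closures in the second-countability step remains the main obstacle.
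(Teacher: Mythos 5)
Your argument is correct and follows essentially the same route as the paper: the paper's proof likewise takes the countable family of normal closures $\llangle g \rrangle$ of single nontrivial elements as a cofinal (hence basic) subfamily of $\Nc(\Gamma)$ to get first and then second countability, and dismisses the remaining two assertions as obvious — your explicit verifications of those two points (finite-index normal subgroups being infinite, hence open, and $\overline{\{e\}} = \Ncore(\Gamma)$) are exactly the intended ones.
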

\begin{proof}
We denote by \(\llangle g \rrangle = \langle \gamma g \gamma^{-1} \ | \ \gamma \in \Gamma \rangle \lhd \Gamma\) the normal subgroup of \(\Gamma\) generated by the conjugacy class of the element \(g \in \Gamma\). The countable collection \(\{\llangle g \rrangle \ | \ g \in G\}\) forms a basis of identity neighborhoods for the normal topology. Thus the topological group \((\Gamma, \Top{N})\) is first (and consequently also second) countable. All the rest of the statements are obvious. 
\end{proof}

In \cite[Definition 2.3]{GG:AOS} we gave a different definition for linear groups of almost simple type. The following lemma shows that in the specific case of countable linear groups these two definitions agree.
\begin{lemma} \label{lem:linear_ast}
Let \(\Gamma\) be a countable linear group. Then \(\Gamma\) is of almost simple type if and only if there exits a faithful linear representation \(\phi: \Gamma \arrow \GL_n(k)\), with \(k\) algebraically closed. Such that the Zariski closure \(G = \left(\overline{\phi(\Gamma)}^Z \right)\) satisfies $G^{(0)} = H^m$ where $H$ is a simple, center free, algebraic group and \(\Gamma\) acts transitively by conjugation on these \(m\) simple factors. Moreover for any such representation if $\trivgp \ne N \lhd \Gamma$ then $G^{(0)} < \left(\overline{\phi(N)}^Z \right)$. 
\end{lemma}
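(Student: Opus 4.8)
The plan is to treat the two implications and the final assertion in turn, obtaining the nontrivial (``only if'') direction by successively replacing a faithful representation with quotients that simplify its Zariski closure. Throughout I would use two soft consequences of almost simple type: that \(\Gamma\) has no nontrivial solvable normal subgroup (the last nontrivial term of the derived series of one would be a nontrivial abelian normal subgroup) and no nontrivial finite normal subgroup. Starting from any faithful \(\phi_0\colon\Gamma\to\GL_n(k)\) with \(k\) algebraically closed (extend scalars if needed), set \(G_0=\overline{\phi_0(\Gamma)}^Z\). The solvable radical \(R=\Rad(G_0^{(0)})\) is characteristic in \(G_0^{(0)}\lhd G_0\), hence normal in \(G_0\), so \(\phi_0^{-1}(R)\) is a solvable normal subgroup of \(\Gamma\) and is therefore trivial; thus the composite of \(\phi_0\) with \(G_0\to G_0/R\) is still faithful and its closure now has semisimple identity component. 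Repeating with the finite characteristic subgroup \(Z(G_1^{(0)})\), whose preimage in \(\Gamma\) is a finite normal subgroup and hence trivial, yields a faithful representation \(\phi\) whose closure \(G\) has adjoint semisimple identity component \(G^{(0)}=H_1\times\cdots\times H_r\), the direct product of its simple center-free factors.

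It remains to make the conjugation action on these factors transitive, and this is the step I expect to be the main obstacle: for a careless representation the factors need not be \(\Gamma\)-conjugate (a simple group embedded by two independent representations gives \(G^{(0)}=H\times H\) with trivial factor action). Conjugation by \(G\) permutes the minimal connected normal subgroups \(H_1,\dots,H_r\), inducing a permutation action of the Zariski dense group \(\Gamma\) on the set of factors. Grouping the factors into \(\Gamma\)-orbits, write \(G^{(0)}=P_1\times\cdots\times P_s\) with each block \(P_j\) a single orbit; each \(P_j\) is a \(\Gamma\)-invariant connected normal subgroup of \(G\). If \(s\ge 2\), then \(P_1\) and \(Q=P_2\cdots P_s\) commute and are \(\Gamma\)-invariant, so \(\phi^{-1}(P_1)\) and \(\phi^{-1}(Q)\) are commuting normal subgroups of \(\Gamma\); by almost simple type one of them is trivial. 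Quotienting by whichever block meets \(\phi(\Gamma)\) trivially preserves faithfulness and strictly lowers the number of orbits, so induction reduces us to \(s=1\). With a single orbit all factors are mutually conjugate, hence isomorphic to a common simple center-free \(H\), and we obtain \(G^{(0)}=H^m\) with transitive conjugation action, as required.

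For the final assertion, assume \(\Gamma\) is of almost simple type together with such a representation, and let \(\trivgp\ne N\lhd\Gamma\). Since almost simple type forbids finite normal subgroups, \(N\) is infinite, so \(\overline{\phi(N)}^Z\) is infinite and its identity component \(\bar N^{(0)}\) is nontrivial. Now \(\bar N^{(0)}\) is a connected normal subgroup of \(G^{(0)}=H^m\), hence equals \(\prod_{i\in S}H_i\) for some \(\emptyset\ne S\subseteq\{1,\dots,m\}\) (connected normal subgroups of a product of simple factors are sub-products, diagonal subgroups failing to be normal because \(H\) is center-free). As \(\Gamma\) normalizes \(\bar N\), the set \(S\) is invariant under the conjugation permutation action, so transitivity forces \(S=\{1,\dots,m\}\). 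Thus \(\bar N^{(0)}=H^m=G^{(0)}\), giving \(G^{(0)}\subseteq\overline{\phi(N)}^Z\).

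Finally this yields the remaining (``if'') implication. Given such a representation, the same computation shows that \emph{every} infinite normal subgroup is Zariski dense in \(G^{(0)}\). Hence two commuting nontrivial normal subgroups cannot both be infinite: their factor-sets would be disjoint (commuting) yet both equal to \(\{1,\dots,m\}\), which is impossible. Consequently a nonabelian obstruction is excluded --- if \(M,N\lhd\Gamma\) commute then one is finite --- and the only remaining point is the absence of a nontrivial finite normal subgroup. This last point is the one place genuinely tied to the precise definition of almost simple type for linear groups in \cite{GG:AOS} (equivalently, to the fact that the chosen \(G\) carries no hidden finite normal subgroup commuting with \(G^{(0)}\)); granting it, \(\Gamma\) is of almost simple type, completing the equivalence.
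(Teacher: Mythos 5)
Your argument follows essentially the same route as the paper's: kill the solvable radical and then the center of the identity component using the absence of solvable and finite normal subgroups, observe that conjugation permutes the simple factors of the adjoint semisimple $G^{(0)}$, and use the ``no commuting normal subgroups'' condition to cut down to a single transitive orbit of factors; the ``moreover'' clause is proved exactly as in the paper (positive-dimensional closure, connected normal subgroups of $H^m$ are sub-products, transitivity forces the full product). Your inductive quotienting by orbit-blocks is only a cosmetic variant of the paper's step, which instead maps $\Gamma$ injectively into $\Aut(G_1)\times\cdots\times\Aut(G_k)$ and invokes subdirect irreducibility to select one injective projection. If anything, your observation that $\overline{\phi(N)}^{(0)}$ is a connected normal subgroup of $H^m$ and hence a sub-product is slightly cleaner than the paper's ``$W\bigcap H\ne\trivgp$ for some factor''.

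The one point you leave open --- ruling out a nontrivial finite normal subgroup of $\Gamma$ in the converse direction --- is exactly the point the paper disposes of by asserting that $G$ admits no finite or abelian normal subgroup. You are right to hesitate: from the hypotheses as literally stated this is not automatic. A finite normal subgroup $W\lhd G$ is centralized by the connected group $G^{(0)}$ (a connected group acting on a finite set acts trivially), so $W\subseteq Z_G(G^{(0)})$, and nothing in the statement forces this centralizer to vanish; for instance a faithful block-diagonal realization of $\PGL_2(\Q)\times(\Z/2\Z)$ has Zariski closure with identity component $\PGL_2$, simple and center free with a (trivially) transitive action on its one factor, yet the group is not of almost simple type. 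The intended reading --- and what the forward direction actually produces --- is that $G$ embeds in $\Aut(G^{(0)})$, i.e.\ $Z_G(G^{(0)})=\trivgp$; granting that, a finite normal $F\lhd\Gamma$ satisfies $\phi(F)\subseteq Z_G(G^{(0)})=\trivgp$ and faithfulness finishes the argument. So your proof is complete up to the same implicit hypothesis the paper itself relies on; rather than ``granting'' the conclusion, state and use that hypothesis (or note that it holds for the representation constructed in the forward direction).
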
 
\begin{proof} Assume that \(\Gamma\) is linear of almost simple type. Realize \(\Gamma < \GL_n(k)\) as a linear group, over an algebraically closed field \(k\) and let \(G = \overline{\Gamma}^{Z}\) be  the Zariski closure. We may assume, without loss of generality, that $G$ is semisimple. If not, we divide out by the solvable radical of $G$. Since \(\Gamma\) has no nontrivial abelian normal subgroups it also has no nontrivial solvable normal subgroups and it will still map injectively into the semisimple quotient. Next we may assume without loss of generality that  $G$ is adjoint upon replacing it by its image under the adjoint representation.  Since \(\Gamma\) has no finite or abelian normal subgroups it maps injectively into this new group too. Note that now $G^{(0)} = H_1 \times H_2 \times \ldots \times H_l$ is a direct product of simple, center free groups. $\Gamma$ acts on $G^{(0)}$ by conjugation. We claim that this action must permute these $l$ simple factors. Fix $\gamma \in \Gamma$ and $1 \le i,j \le l$, by simplicity of these factors there are only two options either $\gamma H_i \gamma^{-1} = H_j$ or $\gamma H_i \gamma^{-1} \bigcap H_j = \trivgp$. When the second of the two options holds, we have $[\gamma H_i \gamma^{-1}, H_j]=e$. Since $\gamma H_i \gamma^{-1}$ cannot commute with the whole group, there must be a (necessarily unique) $j$ such that $\gamma H_i \gamma^{-1} = H_j$. Let us denote the permutation representation thus obtained by $\gamma H_i \gamma^{-1} = H_{\pi(\gamma,i)}$. After rearranging we can rewrite the decomposition of the connected component into simple factors as follows $G^{(0)} = G_1 \times \ldots \times G_k$ where each $G_i$ is the direct power of simple factors $G_i = H_i^{m_i}$ that are permuted transitively by $\Gamma$. This gives rise to an injective map $\phi: \Gamma \rightarrow \Aut(G_1) \times \ldots \times \Aut(G_k)$ and since we assumed that $\Gamma$ is subdirect irreducible we can find a factor $i_0$ such that the map $\phi_{i_0}: \Gamma \rightarrow \Aut(G_{i_0})$ is already injective. This is our desired quotient. 

Conversely assume that $\Gamma$ admits such a linear representation. If $\trivgp \ne N \lhd \Gamma$ then $W=\left(\overline{\phi(N)}^Z \right) \lhd G$. If $N$ were either finite, or abelian, $W$ would be the same, contradicting the fact that $G$ does not admit finite or abelian normal subgroups. 

By assumption $N$ is infinite so that $W$ has positive dimension and $W \bigcap G^{(0)} \neq \trivgp$. Which means that $W \bigcap H \ne \trivgp$ for at least one of the simple factors $G^{(0)} = H^m$. By simplicity $W$ actually contains this simple factor, and by transitivity of the $\Gamma$ action on the factors $W>G^{(0)}$. This establishes the last statement. 

Finally assume that  $N_1 \bigcap N_2 = \trivgp$ for two normal subgroups $\trivgp \ne N_1, N_2 \lhd \Gamma$. Then $[N_1, N_2]=\trivgp$. Passing to the Zariski closure $W_i = \overline{N_i}^Z$, we have $[W_1,W_2]=\trivgp$. But by the previous paragraph this implies that $G^{(0)}$ is abelian which is absurd. This concludes the proof of the lemma.
\end{proof}
\begin{definition}
Let $\Gamma$ be a group of almost simple type. A subgroup $\Delta < \Gamma$ is called {\it{prodense}} if it is dense in the normal topology. 
\end{definition}
\begin{definition}
The core of a subgroup $\Delta < \Gamma$ is the normal subgroup $\Core_{\Gamma}(\Delta) = \bigcap_{\gamma \in \Gamma} \gamma \Delta \gamma^{-1}$. The subgroup $\Delta$ is called {\it{core free}} if $\Core_{\Gamma}(\Delta) = \trivgp$. 
\end{definition} 
Note that $\Core_{\Gamma}(\Delta)$ is exactly the kernel of the permutation representation $\Gamma \curvearrowright \Gamma/\Delta$. Consequently this action is faithful if and only if $\Delta$ is core free. We leave the verification of the following easy lemma to the reader. 
\begin{lemma}
The following conditions are equivalent for a subgroup $\Delta$ in a group $\Gamma$ of almost simple type:
\begin{itemize}
\item $\Delta$ is prodense
\item $\Delta N = \Gamma$ for every $\trivgp \ne N \lhd \Gamma$. 
\item $\phi(\Delta) = \phi(\Gamma)$ whenever $\phi:\Gamma \arrow H$ is a homomorphism with nontrivial kernel. 
\end{itemize}
\end{lemma}
\noindent 
\noindent In particular prodense subgroups are always core free. Clearly every prodense subgroup is profinitely dense. 

Such subgroups play a central role in the strategy for construction of maximal subgroups. The idea is that if $\Delta < \Gamma$ is a prodense (resp. profinitely dense subgroup) then any larger subgroup still has the same property. In particular if $\Delta < M < \Gamma$ is a proper maximal subgroup containing $\Delta$ then $M$ too must be prodense (resp. profinitely dense). This in turn guarantees that $M$ is core free (resp. of infinite index). In order to make sure that $\Delta$ is contained in some proper maximal subgroup we use the following condition. 
\begin{definition} \label{def:cfg}
A subgroup $\Delta < \Gamma$ is called {\it{cofinitely generated}} if $\Gamma = \langle \Delta, F\rangle$ for some finite $F \subset \Gamma$.
\end{definition}
If $\Delta < \Gamma$ is cofinitely generated let $F$ be a finite subset as in the above definition. Zorn's lemma yields a subgroup $\Delta < M < \Gamma$ which is maximal with respect to the condition that $F \not \subset M$. Any strictly larger subgroup will contain $\Gamma = \langle M , F \rangle$, so $M$ must be a maximal subgroup. To summarize the discussion:
\begin{proposition} \label{prop:cfgpd}
Let $\Gamma$ be any group. 
\begin{enumerate}
\item $\Gamma$ contains a maximal subgroup of infinite index if and only if it contains a profinitely dense cofinitely generated subgroup. 
\item  $\Gamma$ contains a core free maximal subgroup if and only if it contains a prodense cofinitely generated subgroup. 
\end{enumerate}
\end{proposition}
\section{Linear groups} \label{sec:lin}
For countable linear groups, there is almost a complete answer to Question \ref{q:main}. The missing part of the puzzle is the case of amenable countable linear groups of almost simple type. We give a more or less complete description of the what is known. Starting with some preliminaries. 
\subsection{Projective transformations over valuation fields} \label{sec:pt}
In this paragraph we shall review some definitions and results from \cite{BG:Dense_Free} and
\cite{BG:Topological_Tits} regarding the dynamical properties of projective transformations which
we shall use in the proof.

Let $k$ be a local field and $\left\| \cdot\right\| $ the standard norm on $k^{n}$, i.e. the
standard Euclidean norm if $k$ is Archimedean and $\left\| x\right\| =\max_{1\leq i\leq n}|x_{i}|$
where $x=\sum x_{i}e_{i}$ when $k$ is non-Archimedean and $(e_{1},\ldots ,e_{n})$ is the canonical basis of $k^{n}$. This norm extends in the usual way to $\Lambda ^{2}k^{n}$. We define the \textit{standard metric} on $\PP(k^{n})$ by $d(\overline{v},\overline{v})=\frac{\left\| v \wedge w\right\|}{\left\| v\right\| \left\| w\right\|}$, where $\overline{v}$ denotes the projective point corresponding to $v\in k^n$. 
$\left\| v \wedge w\right\|$ can be thought of as the `area' of the parallelogram defined by the vectors $v,w$, i.e. $\left\| v\right\|$ times the distance of $w$ to the line spanned by $v$.
Unless otherwise specified all our notation will refer to this metric. In particular $(A)_{\epsilon} :=\{x \in \PP(k^n) \ | \ d(x,A) < \epsilon\}$ will denote the open $\epsilon$-neighborhood and $[A]_\epsilon:=\{x \in \PP(k^{n}) \mid d(x,A) \le \epsilon\}$ the closed neighborhood of a set $A \subset \PP(k^n)$. With respect to this metric, every projective transformation is bi-Lipschitz on $\mathbb{P}(k^{n})$. 
\begin{definition} \label{def:contraction}
For $\epsilon \in (0,1)$, we call a projective transformation $g \in \PGL_{n}(k)$ {\it{$\epsilon-$contracting}} if $g \left( \PP(k^n) \setminus (\overline{H})_{\epsilon} \right) \subset (\overline{v})_{\epsilon}$ for some point $\overline{v} \in \PP(k^n)$ and projective hyperplane $\overline{H} < \PP(K^n)$, which are referred to as an attracting point and a repelling hyperplane for $g$. We say that $g$ is {\it{$\epsilon-$very contracting}} if $g$ and $g^{-1}$ are $\epsilon $-contracting. A projective transformation $g \in \PGL_{n}(k)$ is called {\it $(r,\epsilon )$-proximal} ($r>2\epsilon >0$) if it is $\epsilon$-contracting with respect to some attracting point $\overline{v} \in \mathbb{P}(k^{n})$ and repelling hyperplane $\overline{H}$, such that $d(\overline{v},\overline{H})\geq r$. A projective transformation $g$ is called {\it $(r,\epsilon )$-very proximal} if both $g$ and $g^{-1}$ are $(r,\epsilon )$-proximal. Finally, $g$ is simply called {\it proximal} (resp. {\it very proximal}) if it is $(r,\epsilon )$-proximal (resp. $(r,\epsilon )$-very proximal) for some $r>2 \epsilon >0$.
\end{definition}

The attracting point $\overline{v}$ and repelling hyperplane $\overline{H}$ of an $\epsilon $-contracting transformation are not uniquely defined. Yet, if $g$ is proximal with good enough parameters we have the following natural choice for an attracting point and a repelling hyperplane:
\begin{lemma}\label{fix}(Lemma 3.1 of \cite{BG:Topological_Tits})
Let $k$ be a local field and $\epsilon \in (0,\frac{1}{4})$. There exist two constants $c_{1},c_{2}\geq 1$ such that if $g \in \PGL_n(k)$ is an $(r,\epsilon)$-proximal transformation with $r \geq c_{1} \epsilon$ and associated attracting point $\overline{v}$ and repelling hyperplane $\overline{H}$. Then $g$ fixes a unique point $\overline{v}_{g} \in \left(\overline{v} \right)_{\epsilon}$ and a unique projective hyperplane $\overline{H}_{g} \subset \left(\overline{H}\right)_{\epsilon}$. Moreover, if $r \geq c_{1}\epsilon ^{2/3}$, then the positive powers $g^{n}$, $n\geq 1$, are $(r-2\epsilon ,(c_{2}\epsilon)^{\frac{n}{3}})$-proximal transformations with respect to these same $\overline{v}_{g}$ and $\overline{H}_{g}$. The constants $c_1,c_2$ may depend on the local field $k$, but they become only better when passing to a finite extension field. 
\end{lemma}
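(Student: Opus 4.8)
The plan is to treat the fixed point and the fixed hyperplane as two instances of a single contraction-mapping argument, the second obtained from the first by projective duality, and then to read off the statement about powers by iterating the one-step contraction estimate. Throughout, $C$ denotes a constant depending only on $k$ and $n$ that may change from line to line.

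\emph{The attracting fixed point.} First I would record the purely metric fact that $(\overline{v})_\epsilon$ is carried into itself: since $d(\overline{v},\overline{H})\ge r\ge c_1\epsilon$, the triangle inequality gives $(\overline{v})_\epsilon\subset \PP(k^n)\setminus(\overline{H})_\epsilon$ once $c_1\ge 2$, so $\epsilon$-contraction yields $g\big((\overline{v})_\epsilon\big)\subset(\overline{v})_\epsilon$. The heart of the matter is a quantitative Lipschitz bound: on the set of points at distance $\ge\delta$ from $\overline{H}$, the map $g$ contracts distances by a factor $\le C\epsilon^2/\delta^2$. I would prove this by writing a Cartan (polar) decomposition $g=k_1 a k_2$ with $k_1,k_2$ isometries of the standard metric and $a=\mathrm{diag}(a_1,\dots,a_n)$, $|a_1|\ge\cdots\ge|a_n|$; composing with the isometries reduces everything to $a$, whose attracting point is $[e_1]$ and repelling hyperplane $\{x_1=0\}$ (these agree with the given $\overline{v},\overline{H}$ up to $O(\epsilon)$, which I absorb into the constants). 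Using $d(a\overline{x},a\overline{y})=\|ax\wedge ay\|/(\|ax\|\,\|ay\|)$ together with $\|ax\wedge ay\|\le|a_1a_2|\,\|x\wedge y\|$ and $\|ax\|\ge|a_1|\,|x_1|=|a_1|\,d(\overline{x},\{x_1=0\})\,\|x\|$, one obtains the contraction factor with the singular gap $|a_2|/|a_1|$ in the numerator; and the same wedge computation applied to the defining inequality of $\epsilon$-contraction forces $|a_2|/|a_1|\le C\epsilon^2$. On $(\overline{v})_\epsilon$ we have $\delta\ge r-\epsilon\ge(c_1-1)\epsilon$, so the Lipschitz constant is at most $C/(c_1-1)^2\le\tfrac12$ once $c_1$ is large. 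As $\PP(k^n)$ is compact the closed ball is complete, and the contraction mapping principle produces a unique fixed point $\overline{v}_g\in(\overline{v})_\epsilon$.

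\emph{The fixed hyperplane by duality.} A hyperplane is fixed by $g$ if and only if it is fixed by $g^{-1}$, equivalently the corresponding point of $\PP((k^n)^*)$ is fixed by the contragredient $T=(g^{-1})^{*}=g^{\mathsf T}$ acting on the dual. The singular values of $T$ coincide with those of $g$, so the top gap, and hence the contraction parameter, is the same; a direct check identifies the attracting point of $T$ with the point $\overline{H}^{*}\in\PP((k^n)^{*})$ representing $\overline{H}$ and its repelling hyperplane with the dual of $\overline{v}$, while the incidence formula $d(\overline{v},\overline{H})=|\phi(v)|/(\|\phi\|\,\|v\|)$ shows the relevant attracting-repelling distance in the dual is again comparable to $r$. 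Equipping $\PP((k^n)^{*})$ with its standard metric, which is bi-Lipschitz to the metric on hyperplanes of $\PP(k^n)$ with constants that are trivial in the non-archimedean case and otherwise depend only on $k$ and $n$, I would simply apply the fixed point statement above to $T$. This yields a unique $T$-fixed point in $(\overline{H}^{*})_{\epsilon'}$, that is, a unique $g$-fixed hyperplane $\overline{H}_g\subset(\overline{H})_\epsilon$, after absorbing the comparison constants into $c_1$.

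\emph{The powers.} Here $d(\overline{v}_g,\overline{H}_g)\ge d(\overline{v},\overline{H})-d(\overline{v},\overline{v}_g)-d(\overline{H},\overline{H}_g)\ge r-2\epsilon$, which explains the first parameter. For the contraction parameter I would iterate the one-step estimate around the true fixed data $(\overline{v}_g,\overline{H}_g)$: under the stronger hypothesis $r\ge c_1\epsilon^{2/3}$ the relevant distance to the repelling hyperplane stays of order $\epsilon^{2/3}$, so the per-step contraction factor is at most $C\epsilon^2/\epsilon^{4/3}=C\epsilon^{2/3}$, and a careful accounting of how each application of $g$ pushes the complement of a shrinking neighbourhood of $\overline{H}_g$ into a shrinking neighbourhood of $\overline{v}_g$ gives geometric decay of the stated form $(c_2\epsilon)^{n/3}$. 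I expect the main obstacle to be exactly this bookkeeping: tracking the two interacting radii (around $\overline{v}_g$ and around $\overline{H}_g$) through the iteration so that the exponent comes out as $n/3$ and every iterate remains inside the region where the one-step estimate is valid; the threshold $\epsilon^{2/3}$ is precisely what guarantees the latter. Finally, the assertion that the constants only improve under a finite extension $k'/k$ follows because the sole $k$-dependence enters through the norm-comparison constants relating the chart, projective and dual metrics, and these can only get better (they are trivial in the non-archimedean case) upon enlarging the field.
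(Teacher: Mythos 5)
First, a point of calibration: the survey does not prove this lemma at all --- it is quoted as Lemma 3.1 of Breuillard--Gelander's topological Tits alternative paper, so there is no in-paper proof to compare against. Judged on its own, the first two parts of your argument are the standard route and are sound: the attracting fixed point via the Lipschitz bound $C\epsilon^2/\delta^2$ at distance $\ge\delta$ from $\overline{H}$ (this is exactly the dictionary of Lemma \ref{lem:equiv_contraction}) together with the contraction mapping principle, and the fixed hyperplane by running the same argument for $g^{\mathsf T}$ acting on the dual projective space, whose top singular gap agrees with that of $g$.

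The part on powers, however, contains a genuine gap, and it is not mere bookkeeping. To show $g^n$ is $(r-2\epsilon,(c_2\epsilon)^{n/3})$-proximal with respect to $\overline{v}_g,\overline{H}_g$ you must control $g^n$ on \emph{every} point at distance $\ge(c_2\epsilon)^{n/3}$ from $\overline{H}_g$, and for large $n$ this threshold is far below $\epsilon$ --- indeed below the singular gap $a_2/a_1\approx\epsilon^2$. A single application of $g$ does nothing useful to such points: in the diagonal model the point $\left[(\delta,1,0,\dots,0)\right]$ with $\delta\ll a_2/a_1$ is sent essentially to $[e_2]$, which lies \emph{in} the repelling hyperplane. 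So the scheme ``each application of $g$ pushes the complement of a shrinking neighbourhood of $\overline{H}_g$ into a shrinking neighbourhood of $\overline{v}_g$'' already fails at the first step, and no accounting of radii can rescue a purely pointwise iteration. The missing ingredient is spectral: one first shows that the eigenvalue $\lambda_1$ of $g$ at $\overline{v}_g$ satisfies $|\lambda_1|\ge c\,a_1(g)$ with $c$ controlled in terms of $r$ (this is where the stronger threshold $r\ge c_1\epsilon^{2/3}$ is actually used); then $a_1(g^n)\ge|\lambda_1|^n$ together with submultiplicativity of the norm on $\Lambda^2 k^n$ gives
$$\frac{a_2(g^n)}{a_1(g^n)}\;\le\;\frac{(a_1a_2)^n}{a_1(g^n)^2}\;\le\;\Bigl(\frac{a_2}{c^2a_1}\Bigr)^{\!n},$$
and the one-step criterion of Lemma \ref{lem:equiv_contraction} applied to $g^n$ \emph{itself} yields the stated contraction parameter, the exponent $n/3$ absorbing the loss in $c$. (Equivalently one can repair the iteration by first proving that $g$ \emph{expands} distances to $\overline{H}_g$ by a factor $\asymp a_1/a_2$, but that expansion estimate rests on the same comparison $|\lambda_1|\gtrsim a_1$, so the spectral input cannot be avoided.) Your write-up contains no trace of this eigenvalue--singular value comparison, which is the actual content of the ``powers'' assertion.
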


\begin{remark}
In what follows, whenever we add the article \textit{the} (or {\it the canonical}) to an attracting point and repelling hyperplane of a proximal transformation $g$, we shall mean these fixed point $\overline{v}_{g}$ and fixed hyperplane $\overline{H}_{g}$ obtained in Lemma \ref{fix}.  

Moreover, when $r$ and $\epsilon$ are given, we shall denote by
$A(g),R(g)$ the $\epsilon$-neighborhoods of $\overline{v}_g,\overline{H}_g$ respectively. In some cases, we shall specify different attracting and repelling sets for a proximal element $g$. In such a case we shall denote them by $\mathcal{A}(g),\mathcal{R}(g)$ respectively. This means that
$$g \big(\PP(k^n)\setminus\mathcal{R}(g)\big)\subset\mathcal{A}(g). $$
If $g$ is very proximal and we say that $\mathcal{A}(g),\mathcal{R}(g),\mathcal{A}(g^{-1}),\mathcal{R}(g^{-1})$ are specified attracting
and repelling sets for $g,g^{-1}$ then we shall always require additionally that
$$
 \mathcal{A}(g)\bigcap\big(\mathcal{R}(g)\bigcup\mathcal{A}(g^{-1})\big)=
 \mathcal{A}(g^{-1})\bigcap\big(\mathcal{R}(g^{-1})\bigcup\mathcal{A}(g)\big)=\emptyset.
$$
\end{remark}

\begin{definition} \label{def:dominated} 
If $g,h$ are very proximal elements with given associated repelling and attracting neighborhoods. We will say that {\it{$g$} is dominated by $h$} if $\mathcal{R}(g^{\eta}) \subset \mathcal{R}(h^{\eta}), \mathcal{A}(g^{\eta}) \subset \mathcal{A}(h^{\eta})$ for $\eta \in \pm 1$. 
\end{definition}
These notions depend not only on the elements themselves but on the specific choice of attracting and repelling neighborhoods, but when these neighborhoods are clear, we will suppress them from the notation.

Definition \ref{def:contraction} is stated in terms of the topological dynamics of the action of a single projective transformation $g \in \PGL_n(k)$ on the projective space $\PP(k^n)$.  A fundamental idea to the whole theory, which was fully developed in \cite{BG:Dense_Free}, \cite{BG:Topological_Tits} is that contraction can be alternatively expressed also in metric, or in algebraic terms. Metrically, in terms of the Lipschitz constant of $g$ on an open set away from the repelling hyperplane. Algebraically, in terms of the singular values of the corresponding matrix.  The equivalence between the different notions is quantitative.
\begin{lemma} \label{lem:equiv_contraction}
Let $F$ be a local field, $c_1,c_2$ the constants given in Lemma \ref{fix} and $g \in \GL_n(F)$. Denote by $[g]$ the image of $g$ in $\PGL_n(F)$ and by $g = kak'$ its Cartan decomposition, with $a = diag(a_1,a_2,\ldots, a_n), a_1 \ge a_2 \ge \ldots a_n > 0$. Set $H = \overline{\Span \{k'^{-1}(e_i)\}_{i=2}^{n} }, p = \overline{k e_1}$. Then there exists a constant $c > 0$ depending only on the field such that for any $0<\epsilon < \frac{1}{4}$ we have:
\begin{itemize}
\item If $\frac{a_1}{a_2} \ge \frac{1}{\epsilon^2}$ then $[g]$ is $\epsilon$-contracting with $(H, a)$ as a repelling hyperplane and an attracting point. Moreover $[g]$ is $\frac{\epsilon^2}{r^2}$-Lipschitz outside the $r-$neighborhood of $H$. 
\item Assume that the restriction of $[g]$ to some open set $O \subset \PP(F^n)$ is $\epsilon$-Lipschitz then $\frac{a_1}{a_2} \ge \frac{1}{2 \epsilon}$ and $[g]$ is $c \sqrt{\epsilon}$ contracting. 
\item If $[g]$ is $(r,\epsilon)$-contraction for $r>c_1\epsilon$, then it is $c\frac{\epsilon^2}{d^2}$-Lipschitz outside the $d$-neighborhood of the repelling hyperplane.
\end{itemize}
\end{lemma}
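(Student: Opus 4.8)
The goal is to prove Lemma~\ref{lem:equiv_contraction}, which quantitatively relates three descriptions of contraction for $[g] \in \PGL_n(k)$: the topological dynamical notion ($\epsilon$-contracting with a given repelling hyperplane and attracting point), the metric notion (small Lipschitz constant away from the repelling hyperplane), and the algebraic notion (ratio of the two largest singular values $a_1/a_2$ in the Cartan decomposition). The plan is to work entirely in the Cartan coordinates $g = kak'$ and reduce every statement to the behaviour of the diagonal part $a = \mathrm{diag}(a_1,\dots,a_n)$, since left and right multiplication by the compact-part isometries $k,k'$ act as bi-Lipschitz (in fact isometric, for the standard metric) projective transformations that move the attracting point and repelling hyperplane around without changing the contraction \emph{ratios}.

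\medskip

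\emph{First item.} I would start by understanding the diagonal transformation $a$ directly. Writing a point $\overline{x} \in \PP(k^n)$ with $x = \sum x_i e_i$, the map $a$ sends it to $\overline{\sum a_i x_i e_i}$, which strongly attracts toward $\overline{e_1}$ and repels from the hyperplane $\{x_1 = 0\}$. The key estimate is to bound, using the explicit formula $d(\overline{v},\overline{w}) = \|v\wedge w\|/(\|v\|\,\|w\|)$, how far $a\overline{x}$ is from $\overline{e_1}$ in terms of how far $\overline{x}$ is from the hyperplane $\{x_1=0\}$: concretely, if $\overline x$ lies outside the $r$-neighborhood of $\{x_1 = 0\}$ then $|x_1|$ is comparable to $\|x\|$ up to the factor $r$, and after applying $a$ the first coordinate dominates by the factor $a_1/a_2$. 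This gives both the containment of the image in $(\overline{e_1})_\epsilon$ when $a_1/a_2 \ge \epsilon^{-2}$ and the $\frac{\epsilon^2}{r^2}$-Lipschitz bound outside the $r$-neighborhood of $\{x_1=0\}$ by differentiating (or taking difference quotients of) this coordinate expression. Then I would transport the estimate through $k$ and $k'$: the repelling hyperplane becomes $\overline{\Span\{k'^{-1}(e_i)\}_{i=2}^n} = H$ and the attracting point becomes $\overline{k e_1} = a$, exactly as stated, with the Lipschitz and contraction constants unchanged since $k,k'$ are isometries of the standard metric.

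\medskip

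\emph{Second and third items.} For the converse (second item), I would argue that $\epsilon$-Lipschitz behaviour on \emph{any} open set $O$ already forces a lower bound on $a_1/a_2$: the derivative of $[g]$ at a point is governed by the ratios $a_i/a_j$, and the smallest possible contraction of \emph{some} tangent direction on any open set is controlled by $a_2/a_1$ (a direction transverse to the flag picks up at worst this factor), so $\epsilon$-Lipschitz on $O$ yields $a_2/a_1 \le 2\epsilon$, i.e. $a_1/a_2 \ge \frac{1}{2\epsilon}$; feeding this back into the first item (with $\sqrt\epsilon$ in place of $\epsilon$) produces the $c\sqrt\epsilon$-contracting conclusion. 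The third item is essentially a restatement of the Lipschitz half of the first item but phrased starting from the hypothesis of $(r,\epsilon)$-contraction with $r > c_1\epsilon$: here I would invoke Lemma~\ref{fix} to pin down the canonical attracting point and repelling hyperplane, then reuse the coordinate computation to read off the $c\frac{\epsilon^2}{d^2}$-Lipschitz bound outside the $d$-neighborhood of the repelling hyperplane.

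\medskip

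The main obstacle I expect is \textbf{bookkeeping the constants uniformly across Archimedean and non-Archimedean $k$}, since the norm on $\Lambda^2 k^n$ and the precise relation between the wedge-product metric and coordinatewise estimates differ between the two cases (genuine triangle inequality versus ultrametric inequality), and the derivative-based argument for the second item must be replaced by a difference-quotient argument in the non-Archimedean setting where there is no differentiable structure. Keeping all constants depending only on $k$ (and, as Lemma~\ref{fix} notes, improving under finite field extension) rather than on $n$ or $\epsilon$ is the delicate point; the actual inequalities are elementary manipulations of the explicit metric formula once the right comparison between $\|x\wedge y\|$ and the separation from the relevant hyperplane is set up.
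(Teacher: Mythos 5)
The paper itself offers no proof of Lemma \ref{lem:equiv_contraction}: it is quoted, with the surrounding discussion, from \cite{BG:Dense_Free} and \cite{BG:Topological_Tits}, so there is no in-paper argument to compare yours against. That said, your sketch is exactly the standard argument from those references: reduce to the diagonal part of the Cartan decomposition (the compact factors $k,k'$ preserve the standard norm, hence act as isometries of $\PP(k^n)$ and of $\Lambda^2 k^n$), and then read everything off the formula $d(\overline v,\overline w)=\|v\wedge w\|/(\|v\|\|w\|)$. For the first item the computation is precisely the one you indicate: if $\bar x$ is outside the $r$-neighborhood of $\{x_1=0\}$ then $|x_1|\ge r\|x\|$, and $\|ax\wedge ay\|\le a_1a_2\|x\wedge y\|$ together with $\|ax\|\ge a_1|x_1|$ gives the $\frac{a_2}{a_1 r^2}\le\frac{\epsilon^2}{r^2}$ Lipschitz bound, from which the $\epsilon$-contraction follows by taking $r=\epsilon$.

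Two places where your sketch is thinner than what an actual write-up needs. First, in the second item the phrase ``a direction transverse to the flag picks up at worst this factor'' hides the real work: one must exhibit, inside an \emph{arbitrary} open set $O$, a pair of points $\bar x,\bar y$ whose expansion ratio under $[a]$ is at least $\tfrac{1}{2}\,a_2/a_1$, and since that ratio is bounded below by $\frac{a_ia_j|(x\wedge y)_{ij}|}{a_1^2\|x\wedge y\|}$ one has to choose $y$ (and the index pair $i<j$) so that a Pl\"ucker coordinate weighted by $a_ia_j\ge a_1a_2$-comparable singular values carries a definite fraction of $\|x\wedge y\|$; this requires a short case analysis on which coordinate of $x$ dominates, and is where the factor $2$ comes from. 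Second, in the third item you cannot simply ``reuse the coordinate computation'' after invoking Lemma \ref{fix}: the Lipschitz bound of the first item is stated in terms of $a_2/a_1$, so you first need the reverse algebraic implication that $(r,\epsilon)$-contraction with $r>c_1\epsilon$ forces $a_2/a_1\le c'\epsilon^2$ (non-degeneracy of the attracting point relative to the repelling hyperplane is what $r>c_1\epsilon$ buys you here); only then does the first item's computation yield the $c\,\epsilon^2/d^2$ Lipschitz constant. Neither point is a wrong turn --- both are carried out in the Breuillard--Gelander papers along the lines you describe --- but they are the substantive steps a complete proof must supply, beyond the constant bookkeeping you already flag.
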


Using proximal elements, one constructs free groups with the following variant of the classical
ping-pong lemma.
\begin{lemma}\label{lem:ping-pong}
Suppose that $\{ g_i\}_{i\in I} \subset \PGL_n(k)$ is a set of very proximal elements, each
associated with some given attracting and repelling sets for itself and for its inverse. Suppose
that for any $i\neq j,~i,j\in I$ the attracting set of $g_i$ (resp. of $g_i^{-1}$) is disjoint
from both the attracting and repelling sets of both $g_j$ and $g_j^{-1}$. Then the $g_i$'s form a
free set, i.e. they are free generators of a free group.
\end{lemma}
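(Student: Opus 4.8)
The plan is to prove the ping-pong lemma via the classical ping-pong strategy, using the disjointness of attracting and repelling sets to set up the combinatorial table that controls reduced words. The statement asserts that if $\{g_i\}_{i \in I}$ are very proximal elements whose attracting/repelling sets satisfy the stated pairwise-disjointness hypothesis, then every nontrivial reduced word in the $g_i^{\pm 1}$ is nontrivial in $\PGL_n(k)$. I would first fix notation: for each $i$ write $\mathcal{A}(g_i^{\eta}), \mathcal{R}(g_i^{\eta})$ for $\eta = \pm 1$, and recall from the convention established in the remark after Lemma~\ref{fix} that each element satisfies the dynamical inclusion $g_i^{\eta}\big(\PP(k^n) \setminus \mathcal{R}(g_i^{\eta})\big) \subset \mathcal{A}(g_i^{\eta})$, together with the built-in disjointness $\mathcal{A}(g_i^{\eta}) \cap \big(\mathcal{R}(g_i^{\eta}) \cup \mathcal{A}(g_i^{-\eta})\big) = \emptyset$.

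**Next I would** carry out the standard ping-pong bookkeeping. Let $w = g_{i_\ell}^{\eta_\ell} \cdots g_{i_1}^{\eta_1}$ be a reduced word (so no cancellation $i_{j+1} = i_j$ with $\eta_{j+1} = -\eta_j$), and I want to find a point that $w$ moves out of a suitable set, showing $w \neq e$. The cleanest route is to pick a ``test point'' $x_0$ lying outside all the repelling sets and outside the attracting set $\mathcal{A}(g_{i_1}^{-\eta_1})$ that would otherwise cause trouble at the innermost letter — here the hypothesis that the attracting set of each $g_i^{\pm 1}$ is disjoint from \emph{both} the attracting and repelling sets of every \emph{other} $g_j^{\pm 1}$ is exactly what guarantees the iterative inclusion proceeds. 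I would then show by induction on $j$ that the partial product $g_{i_j}^{\eta_j} \cdots g_{i_1}^{\eta_1}(x_0) \in \mathcal{A}(g_{i_j}^{\eta_j})$: applying $g_{i_{j+1}}^{\eta_{j+1}}$ requires that the current point avoid $\mathcal{R}(g_{i_{j+1}}^{\eta_{j+1}})$, which follows because the current point lies in $\mathcal{A}(g_{i_j}^{\eta_j})$, and reducedness ($i_{j+1} \neq i_j$, or same index but same sign) combined with the disjointness hypotheses forces $\mathcal{A}(g_{i_j}^{\eta_j})$ to miss $\mathcal{R}(g_{i_{j+1}}^{\eta_{j+1}})$.

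**The one subtlety** — and the step I expect to be the main obstacle — is the treatment of the innermost and outermost letters and, relatedly, the case where the word is cyclically reduced versus not. The bare inclusion argument shows $w(x_0) \in \mathcal{A}(g_{i_\ell}^{\eta_\ell})$, and to conclude $w \neq e$ I must ensure $x_0 \notin \mathcal{A}(g_{i_\ell}^{\eta_\ell})$; this can always be arranged since there are at least two disjoint attracting sets to choose among (if $|I| = 1$ one instead uses that $\mathcal{A}(g) \cap \mathcal{A}(g^{-1}) = \emptyset$ and picks $x_0$ appropriately, handling $w = g^m$ directly). A clean way to package this uniformly is to first reduce to cyclically reduced words by conjugation — it suffices to show every cyclically reduced word is nontrivial, since conjugation preserves triviality — and then choose the base point in the attracting set of the inverse of the outermost letter, which the cyclic-reducedness condition keeps disjoint from the outermost attracting set.

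**Finally** I would note that the hypotheses are stated only for the neighbourhoods $\mathcal{A}, \mathcal{R}$ and make no reference to the metric contraction constants, so no appeal to Lemma~\ref{fix} or Lemma~\ref{lem:equiv_contraction} is needed here; the proof is purely a topological-dynamics ping-pong and the very-proximality is used solely to supply the four sets with their defining inclusion property. The conclusion that the $g_i$ freely generate a free group is then immediate: a map from the free group $F_{|I|}$ sending free generators to the $g_i$ has trivial kernel precisely because every nontrivial reduced word maps to a nontrivial element, which is what the ping-pong argument establishes.
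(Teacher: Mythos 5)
The paper itself offers no proof of this lemma --- it is quoted as ``a variant of the classical ping-pong lemma'' and used as a black box (the reference given later for the standard statement is \cite[Proposition 3.4]{G:conv_non_ig}) --- so the comparison is with the classical argument, which is indeed the one you outline. Your inductive step is correct and complete: if the current point lies in $\mathcal{A}(g_{i_j}^{\eta_j})$ and the next letter is $g_{i_{j+1}}^{\eta_{j+1}}$, then reducedness together with the stated disjointness (cross-disjointness when $i_{j+1}\neq i_j$, and $\mathcal{A}(g)\cap\mathcal{R}(g)=\emptyset$ from the standing convention when $i_{j+1}=i_j$ and $\eta_{j+1}=\eta_j$) keeps the point out of $\mathcal{R}(g_{i_{j+1}}^{\eta_{j+1}})$, so the next image lands in $\mathcal{A}(g_{i_{j+1}}^{\eta_{j+1}})$.

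The flaw is in your handling of the base point, and it comes from tacitly identifying $\mathcal{R}(g)$ with $\mathcal{A}(g^{-1})$ --- a rank-one intuition that fails in $\PGL_n(k)$ for $n>2$. The convention only guarantees $\mathcal{A}(g)\cap\bigl(\mathcal{R}(g)\cup\mathcal{A}(g^{-1})\bigr)=\emptyset$ and its mirror image; it does \emph{not} give $\mathcal{A}(g^{-1})\cap\mathcal{R}(g)=\emptyset$, and in fact the canonical attracting point $\overline{v}_{g^{-1}}$ typically lies on the repelling hyperplane $\overline{H}_{g}$ (for a diagonal element, $e_n\in\mathrm{span}(e_2,\dots,e_n)$), so usually $\mathcal{A}(g^{-1})\subset\mathcal{R}(g)$. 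Hence your ``clean packaging'' breaks: for a cyclically reduced word with $i_1=i_\ell$ and $\eta_1=\eta_\ell$, a base point chosen in $\mathcal{A}(g_{i_\ell}^{-\eta_\ell})=\mathcal{A}(g_{i_1}^{-\eta_1})$ may sit inside $\mathcal{R}(g_{i_1}^{\eta_1})$ and the induction never starts; and a point ``outside all the repelling sets'' is more than is needed and is not guaranteed to exist by the stated hypotheses alone. The correct choice, which your first remark gestures at, is: when $|I|\ge 2$ pick $(j,\eta)$ with $j\neq i_1$ and $(j,\eta)\neq(i_\ell,\eta_\ell)$ and take $x_0\in\mathcal{A}(g_j^{\eta})$ (nonempty, as it contains $\overline{v}_{g_j^{\eta}}$); cross-disjointness gives $x_0\notin\mathcal{R}(g_{i_1}^{\eta_1})$, and disjointness of distinct attracting sets gives $wx_0\in\mathcal{A}(g_{i_\ell}^{\eta_\ell})\not\ni x_0$, so $w\neq e$. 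The case $|I|=1$ reduces, as you say, to the fact that a proximal element of $\PGL_n(k)$ has infinite order, which should be handled directly rather than by ping-pong.
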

This lemma calls for several important definitions that will play a central role in this paper. Our general setting is somewhat more general than usual, because we work with countable linear groups that are not necessarily finitely generated. 

Let $(K,v)$ be a complete valued field and $\Gamma < \PGL_n(K)$ a countable group. For every $\Delta < \Gamma$ let us denote by $(k(\Delta), v(\Delta))$ the closed subfield that is generated by the matrix coefficients of elements in $\Delta$. Even though $K$ itself will typically not be a local field , we do assume that $(k(\Delta),v(\Delta))$ is a local field whenever $\Delta$ is finitely generated.
\begin{definition}
A finite list of elements $\{g_i \in \Gamma\}_{i \in I}$ will be called a {\it{ping-pong}} or a {\it{Schottky}} tuple, and the group that they generate $\Delta = \langle g_i \ | \ i \in I \rangle$ will be called a {\it{Schottky subgroup}} if they satisfy the conditions of Lemma \ref{lem:ping-pong} on $\PP(k^n)$ for every intermediate local field $k(\Delta) < k < K$. Such a tuple, as well as the group it generates, will be referred to as {\it{spacious Schottky}} if there exists an additional element $g \in \Gamma$ such that $\{g_i \ | \ i \in I \} \bigcup \{g\}$ is still a Schottky tuple.  Finally we will call $\Delta < \Gamma$ {\it{locally Schottky}} or {\it{locally spacious Schottky}} if every finitely generated subgroup of $\Delta$ is such. We denote by $X = X(\Gamma)$ the collection of locally spacious Schottky subgroups of $\Gamma$. When $\Gamma$ is an abstract group, with an action on a projective space given by a representation $\rho: \Gamma \rightarrow \PGL_n(k)$, we will denote by $X^{\rho}(\Gamma)$ the collection of subgroups whose image under $\rho$ is locally spacious Schottky. 
\end{definition}
One example of a locally spacious Schottky group is just an {\it{infinite Schottky group}}. By definition this is a subgroup with a given infinite generating set $\Delta = \langle g_i  \ | \ i \in I \rangle$ such that for every finite $J \subset I$ the tuple $\{g_i \ | \ i \in J\}$ is a ping-pong tuple, and hence $\Delta_J = \langle g_j \ | \ j \in J \rangle$ plays ping-pong on $\PP(k_J)$ with $k_J = k(\Delta_J)$. The local fields $\{k_J\}$, and corresponding projective spaces $\{\PP(k_J)\}$ form a direct system inside $K$ and $\PP(K)$ respectively. Though note that in our current setting there is no canonical ping-pong playground where the generators play all together. The following lemma is easy, we leave its proof to the readers. 
\begin{lemma} \label{lem:subsch}
Let $\Delta = \langle \eta_1, \eta_2, \ldots, \eta_m, \zeta \rangle \in X(\Gamma)$ be a spacious Schottky subgroup. Let $\{e \ne w_j \ | \ j \in \N\}$ be any ordered set of nontrivial elements, possibly containing repetitions,  in the group $\langle \eta_1, \eta_2, \ldots, \eta_m \rangle$. Then $\{\zeta^{-i} w_i \zeta^{-i} \ | \ i \in \N \}$ is an infinite Schottky tuple.
\end{lemma}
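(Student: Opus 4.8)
The plan is to exhibit explicit attracting and repelling sets for each $g_i := \zeta^{-i} w_i \zeta^{-i}$ and then to verify the hypotheses of the ping-pong Lemma \ref{lem:ping-pong} for every finite subfamily and over every intermediate local field. First I would fix notation for the dynamics of $\zeta$: write $x_+,x_-$ for the attracting points of $\zeta,\zeta^{-1}$ and $H_+,H_-$ for their repelling hyperplanes, and abbreviate the given neighbourhoods $A^{\pm}=\mathcal{A}(\zeta^{\pm 1})$ (neighbourhoods of the points $x_\pm$) and $R^{\pm}=\mathcal{R}(\zeta^{\pm 1})$ (neighbourhoods of the hyperplanes $H_\pm$). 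Let $S_{\mathrm{pt}}$ be the union of the attracting neighbourhoods $\mathcal{A}(\eta_j^{\pm 1})$, let $S_{\mathrm{hp}}$ be the union of the repelling neighbourhoods $\mathcal{R}(\eta_j^{\pm 1})$, and set $S=S_{\mathrm{pt}}\cup S_{\mathrm{hp}}$. Since $\{\eta_1,\dots,\eta_m,\zeta\}$ is a Schottky tuple, $S$ is disjoint from each of $A^{\pm},R^{\pm}$; since each $w_i$ is a nontrivial element of the ping-pong group $\langle\eta_1,\dots,\eta_m\rangle$, it is very proximal with its four attracting/repelling sets contained in $S$ and pairwise separated. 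The one consequence of the contraction property of $\zeta$ that I would record is that $\zeta^{k}(S_{\mathrm{pt}})\subset A^{+}$ and $\zeta^{-k}(S_{\mathrm{pt}})\subset A^{-}$ for all $k\ge 1$: this follows by iterating $\zeta^{\pm 1}\big(\PP(k^n)\setminus R^{\pm}\big)\subset A^{\pm}$ together with $A^{\pm}\cap R^{\pm}=\emptyset$. Lemma \ref{fix} and Lemma \ref{lem:equiv_contraction} guarantee these images actually shrink to $x_\pm$, and the remark in Lemma \ref{fix} that the proximality constants only improve under finite extensions is what lets the argument run uniformly over every intermediate field $k(\Delta')<k<K$. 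Although $\Delta$ is assumed spacious, the construction uses only this Schottky ping-pong configuration of the given generators.

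Next I would write down the data for $g_i$, where the choice of exponents is the whole point. Composing the three maps shows $g_i$ is very proximal with
$\mathcal{A}(g_i)=\zeta^{-i}(\mathcal{A}(w_i))$, $\mathcal{R}(g_i)=\zeta^{i}(\mathcal{R}(w_i))$, $\mathcal{A}(g_i^{-1})=\zeta^{i}(\mathcal{A}(w_i^{-1}))$, $\mathcal{R}(g_i^{-1})=\zeta^{-i}(\mathcal{R}(w_i^{-1}))$: indeed if $x\notin\zeta^{i}(\mathcal{R}(w_i))$ then $\zeta^{-i}(x)\notin\mathcal{R}(w_i)$, so $w_i\zeta^{-i}(x)\in\mathcal{A}(w_i)\subset S\subset\PP(k^n)\setminus R^{-}$, whence $g_i(x)=\zeta^{-i}(w_i\zeta^{-i}(x))\in\zeta^{-i}(\mathcal{A}(w_i))=\mathcal{A}(g_i)$. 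Here I would emphasise why the exponent is $-i$ on \emph{both} sides rather than a conjugation $\zeta^{-i}w_i\zeta^{i}$: the left factor $\zeta^{-i}$ drives the attracting point to $x_-$, while the right factor $\zeta^{-i}$ forces the repelling hyperplane $\zeta^{i}(H_{w_i})$ toward $H_-$, which remains at distance $\ge r$ from $x_-$. A conjugation would instead push the repelling hyperplane toward $H_+$, and since $x_-\in H_+$ the attracting point and the repelling hyperplane would collapse onto one another, destroying proximality as $i\to\infty$. With the present choice $\mathcal{A}(g_i)\subset A^-$ and $\mathcal{R}(g_i)\subset R^-$ stay uniformly separated, so each $g_i$ is very proximal with parameters independent of $i$.

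I would then organise the whole disjointness verification around a single mechanism. Every inequality demanded by Lemma \ref{lem:ping-pong} has the form $\zeta^{a}(U)\cap\zeta^{b}(V)=\emptyset$, where $U\subset S_{\mathrm{pt}}$ comes from one of the \emph{attracting} sets $\mathcal{A}(g_i^{\pm 1})$ (so $a\in\{+i,-i\}$) and $V\subset S$ comes from any set of $g_j^{\pm 1}$ (so $b\in\{+j,-j\}$); crucially the factor issuing from $g_i$ is always an attracting, i.e. point, neighbourhood. When $i\neq j$ we have $a\neq b$, and applying $\zeta^{-b}$ it suffices to see $\zeta^{a-b}(U)\cap V=\emptyset$; but $a-b\neq 0$, so $\zeta^{a-b}(U)\subset A^{+}$ or $A^{-}$ by the containment recorded above, and this is disjoint from $V\subset S$ because $S\cap A^{\pm}=\emptyset$. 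This settles all cross-terms at once. The single-element conditions (that each $g_i$ itself is very proximal with separated sets) follow by the same computation: either the two powers coincide, reducing to the separation of the four sets of $w_i$, or they differ by $2i$ and fall under the mechanism just described. Hence every finite subtuple satisfies Lemma \ref{lem:ping-pong}, and $\{\zeta^{-i}w_i\zeta^{-i}\mid i\in\N\}$ is an infinite Schottky tuple; repetitions among the $w_j$ are harmless, since distinct indices are separated by the power of $\zeta$ alone.

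The step I expect to be the main obstacle is exactly the bookkeeping of attracting points versus repelling hyperplanes, because of the degeneration $x_-\in H_+$. The naive attempt to use the fixed neighbourhoods $R^{\pm}$ as repelling sets for the $g_i$ fails outright: the attracting sets $\mathcal{A}(g_i)\subset A^-$ accumulate at $x_-\in H_+\subset R^{+}$ and so cannot be separated from $R^{+}$. The resolution is to localise the repelling sets as the shells $\zeta^{i}(\mathcal{R}(w_i))$ and $\zeta^{-i}(\mathcal{R}(w_i^{-1}))$, and to observe that in every required disjointness one of the two competing sets is a point neighbourhood that a suitable power of $\zeta$ sweeps into $A^{+}$ or $A^{-}$. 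Identifying this point/hyperplane dichotomy and getting the sign of each exponent right is the crux; once it is in place the remainder is purely formal.
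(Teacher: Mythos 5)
The paper offers no proof of this lemma to compare against --- it is stated with ``we leave its proof to the readers'' --- so I can only judge your argument on its own terms: it is correct and is surely the intended one. The key points are all in place: the four sets of $g_i=\zeta^{-i}w_i\zeta^{-i}$ are $\zeta^{\pm i}$-translates of attracting/repelling sets of $w_i^{\pm 1}$ (which one takes to be the sets of the first, respectively last, letter of the reduced word $w_i$), and every disjointness actually demanded by Lemma \ref{lem:ping-pong} and the accompanying remark pits an \emph{attracting} (point-type) set against another set carrying a \emph{different} power of $\zeta$, so that one application of $\zeta^{a-b}$ sweeps the point-neighbourhood into $A^{+}$ or $A^{-}$, which the original ping-pong hypotheses make disjoint from $S$. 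Two harmless overstatements are worth trimming: (i) your blanket claim that $S$ is disjoint from $R^{\pm}$ is not guaranteed for the hyperplane part $S_{\mathrm{hp}}$ (two repelling hyperplane-neighbourhoods will generally meet), but every place you actually invoke disjointness from $R^{\pm}$ only the point part $S_{\mathrm{pt}}$ enters, and likewise the ``pairwise separated'' sets of $w_i$ and the ``powers coincide'' case are never needed, since the required same-index conditions always carry exponents $+i$ versus $-i$; (ii) in your motivational comparison with conjugation, the repelling set is $\zeta^{i}(\mathcal{R}(w_i))$ in \emph{both} cases and tends to $H_-$ rather than $H_+$ --- what changes is the attracting point ($x_+$ versus $x_-$), and the degeneration you correctly sense comes from $x_+\in H_-$.
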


\begin{definition} \label{def:Mflag}
Let $K$ be any field. A pair $\overline{\alpha} = \left(\overline{H}^{+},\overline{v}^{-} \right) $ with $\overline{H}^{+} < \PP(K^n)$ a hyperplane and $\overline{v}^{-} \in \overline{H}^{+}$ a point will be called a {\it{minimal flag}} or an {\it{M-flag}} for short. We denote by $\MF(K^n)$ the (projective) variety of all such M-flags. We will say that two M-flags $\overline{\alpha}_1, \overline{\alpha}_2$ {\it{touch each other}} if either $\overline{v}^{-}_{1} \in \overline{H}^{+}_{2}$ or $\overline{v}^{-}_{2} \in \overline{H}^{+}_{1}$. A collection of M-flags $\left \{\alpha_i = \left( \overline{H}^{+}_i,\overline{v}^{-}_i \right) \ | \ i  \in I \right\}$ is said to be {\it{in general position}} if the following conditions are satisfied: 
\begin{itemize}
\item $\bigcap_{j \in J} \overline{H}^{+}_{j} = \emptyset,$ for every $J \subset I$ with $|J|=n$.  
\item $\Span \{\overline{v}^{-}_{j} \ | \ j \in J \} := \overline{\Span \left\{v^{-}_{j} \ | \ j \in J \right\}} = \PP(K^n),$ for every $J \subset I$ with $|J|=n$.
\end{itemize}
\end{definition}
Note that the $\pm$ indices in this definition do not really play any role, they are only there in anticipation of the following:
\begin{definition} \label{def:Mping}
If $k$ is a local field and $g \in \PGL_n(k)$ is a very proximal element then we can associate with it an M-flag $\alpha(g) = \left(\overline{H}_{g}, \overline{v}_{g^{-1}}\right)$. We will say that a ping pong tuple $(g_1,g_2, \ldots, g_m)$ is {\it{in general position}} if both $\{\overline{\alpha}(g_i) \ | \ 1 \le i \le m \}$ and $\{\overline{\alpha}(g_i^{-1}) \ | \ 1 \le i \le m \}$ are. 
\end{definition} 
\begin{lemma} \label{lem:NoTouch}
If $\{\alpha_i \in \MF(K^n) \ | \ 1 \le i \le m\}$ is in general position with $m \ge 2n-1$, then no $\alpha \in \MF(K^n)$ can touch simultaneously all of the M-flags $\{\alpha_{i}, \ 1 \le i \le m\}$. 
\end{lemma}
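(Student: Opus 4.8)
The plan is to argue by contradiction using a pigeonhole principle, exploiting the dictionary between the two general-position conditions and linear independence. Suppose some M-flag $\alpha = \left(\overline{H}^{+},\overline{v}^{-}\right)$ touches every $\alpha_i$, $1 \le i \le m$. By the definition of touching, for each index $i$ at least one of the two incidences holds: either $\overline{v}^{-} \in \overline{H}^{+}_i$, in which case I call $i$ of \emph{type A}, or $\overline{v}^{-}_i \in \overline{H}^{+}$, in which case I declare $i$ of \emph{type B} (assigning any index that satisfies both incidences to type A, say, so that the two types genuinely partition $\{1,\ldots,m\}$). Since $m \ge 2n-1$, if both types had at most $n-1$ indices the total would be at most $2n-2 < m$; hence at least one type contains $n$ indices.

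First I would treat the type A case. Lift $\overline{v}^{-}$ to a nonzero vector $v^{-} \in K^n$, and lift each hyperplane $\overline{H}^{+}_i$ to the linear subspace $H^{+}_i \subset K^n$ of dimension $n-1$. If $i_1,\ldots,i_n$ are of type A, then $v^{-}$ lies in $H^{+}_{i_1} \cap \cdots \cap H^{+}_{i_n}$. But the first general-position hypothesis, applied to $J = \{i_1,\ldots,i_n\}$, says precisely that $\bigcap_{k=1}^{n} \overline{H}^{+}_{i_k} = \emptyset$ in $\PP(K^n)$, i.e. that $H^{+}_{i_1} \cap \cdots \cap H^{+}_{i_n} = \{0\}$. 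This forces $v^{-} = 0$, contradicting that $v^{-}$ represents a projective point.

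The type B case is dual. If $i_1,\ldots,i_n$ are of type B, then each representative $v^{-}_{i_k}$ lies in the $(n-1)$-dimensional subspace underlying $\overline{H}^{+}$. On the other hand, the second general-position hypothesis, applied to the same $J$, asserts $\Span\{\overline{v}^{-}_{i_1},\ldots,\overline{v}^{-}_{i_n}\} = \PP(K^n)$, i.e. that $v^{-}_{i_1},\ldots,v^{-}_{i_n}$ form a basis of $K^n$. A single hyperplane cannot contain a spanning set, so we again reach a contradiction, which is what I want.

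I do not expect any real obstacle here beyond bookkeeping: the entire content is the translation of \emph{general position} into linear independence, together with the counting bound $m \ge 2n-1$. That bound is sharp, since with $m = 2n-2$ one could split the flags into $n-1$ of each type and escape the contradiction. The only point worth stating carefully is the assignment of doubly-incident indices to a single type, so that the two classes partition the index set and the pigeonhole count applies cleanly.
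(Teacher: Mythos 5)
Your proof is correct and is essentially the paper's argument: both rest on the observation that the first general-position condition allows at most $n-1$ indices with $\overline{v}^{-}\in\overline{H}^{+}_i$ and the second allows at most $n-1$ indices with $\overline{v}^{-}_i\in\overline{H}^{+}$, so $2n-1$ flags cannot all be touched. The only cosmetic difference is that you phrase it as a pigeonhole contradiction, whereas the paper argues directly by exhibiting an index $i_0$ for which $\alpha$ and $\alpha_{i_0}$ fail to touch.
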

\begin{proof}
Let $I = \{1,2,\ldots, 2n-1\}$ and assume that $\alpha_i = \left(\overline{H}_{i}^{+}, \overline{v}_{i}^{-} \right)$ and $\alpha = \left(\overline{H}^{+}, \overline{v}^{-} \right)$. Set $J = \{i \in I \ | \overline{v}^{-} \not \in \overline{H}^{+}_i \}$. Since $\overline{v}^{-} \in \bigcap_{i \in I \setminus J} \overline{H}^{+}_{i} \ne \emptyset$, the first condition in the general position assumption implies that $|J| \ge n$. Now the second condition implies that $\Span \left\{ \overline{v}_{j}^{-} \ | \ j \in J \right \} = \PP(K^n)$ so that $\overline{v}_{i_0}^{-} \not \in \overline{H}^{+}$ for some $i_0 \in J$. Thus the two M-flags $\alpha, \alpha_{i_0}$ fail to touch and the lemma is proved.
\end{proof}

\begin{definition}
A linear representation $\rho: \Gamma \rightarrow \GL_n(k)$ is called {\it{strongly irreducible}} if one of the following equivalent conditions hold:
\begin{itemize}
\item $\rho(\Gamma)$ does not preserve a finite collection of nontrivial proper subspaces. 
\item $\rho(\Delta)$ is irreducible for every finite index subgroup $\Delta < \Gamma$. 
\item $\left( \overline{\rho(\Gamma)}^{Z} \right)^{(0)}$ is irreducible. 
 \end{itemize}
 \end{definition}
We leave the verification of the equivalence to the reader. Strongly irreducible representations of groups of almost simple type offer a lot of flexibility, a fact which we attempt to capture in the next two lemmas. 
\begin{lemma} \label{lem:escape}
Let $\Gamma < \GL_n(K)$ be a group of almost simple type, $G = \overline{\Gamma}^Z$ its Zariski closure with $G^{(0)} = H^m$ as in Lemma \ref{lem:linear_ast}. Let $\rho: G \rightarrow \GL_m(K)$ be a strongly irreducible representation defined over $K$. Then $\rho(N)$ is irreducible for every $\trivgp \ne N \lhd \Gamma$. In particular given a vector $0 \ne v \in K^m$ and a hyperplane $V < K^m$ there is some $n \in N$ such that $\rho(n)(v) \not \in V$.
\end{lemma}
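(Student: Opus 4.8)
The plan is to reduce the whole statement to the single structural input isolated in Lemma \ref{lem:linear_ast} — that the Zariski closure of any nontrivial normal subgroup of $\Gamma$ already contains the connected component $G^{(0)}$ — after observing that strong irreducibility of $\rho$ is really a statement about the restriction of $\rho$ to $G^{(0)}$. First I would unwind the hypothesis on $\rho$ at the level of the connected component: since $\Gamma$ is Zariski dense in $G$ and $\rho$ is a morphism of algebraic groups, $\overline{\rho(\Gamma)}^Z = \rho(G)$ and hence $\left(\overline{\rho(\Gamma)}^Z\right)^{(0)} = \rho\big(G^{(0)}\big)$. By the third equivalent formulation of strong irreducibility, this connected subgroup $\rho\big(G^{(0)}\big)$ acts irreducibly on $K^m$; this is the only consequence of the hypothesis on $\rho$ that I will need.

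Next, fix $\trivgp \ne N \lhd \Gamma$ and let $W \subseteq K^m$ be any $K$-subspace invariant under $\rho(N)$. The stabilizer $P_W = \{g \in \GL_m : gW \subseteq W\}$ is a Zariski-closed subgroup defined over $K$, so invariance under $\rho(N)$ is the same as invariance under its Zariski closure; that is, $\rho(N)$ and $\overline{\rho(N)}^Z$ have exactly the same invariant $K$-subspaces. Now $\rho^{-1}\big(\overline{\rho(N)}^Z\big)$ is Zariski closed in $G$ and contains $N$, hence contains $\overline{N}^Z$, which by Lemma \ref{lem:linear_ast} contains $G^{(0)}$; applying $\rho$ gives $\rho\big(G^{(0)}\big) \subseteq \overline{\rho(N)}^Z$. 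Therefore $W$ is invariant under $\rho\big(G^{(0)}\big)$, and by the first paragraph $W \in \{0, K^m\}$. This proves $\rho(N)$ is irreducible.

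The ``in particular'' clause is then the standard orbit argument. Given $0 \ne v \in K^m$ and a hyperplane $V < K^m$, suppose toward a contradiction that $\rho(n)v \in V$ for every $n \in N$. Then $W = \Span\{\rho(n)v : n \in N\}$ is a $\rho(N)$-invariant subspace (because $N$ is a group), it is nonzero since $\rho(e)v = v \in W$, and it is proper since $W \subseteq V \subsetneq K^m$. This contradicts the irreducibility of $\rho(N)$ just established, so some $n \in N$ must satisfy $\rho(n)v \notin V$.

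The main obstacle here is conceptual bookkeeping rather than genuine difficulty: one must be careful that the equivalent notions of strong irreducibility are applied to a representation of the algebraic group $G$ rather than of the abstract group $\Gamma$, and one must pass cleanly between the invariant $K$-subspaces of a subgroup and those of its Zariski closure via the closed stabilizer $P_W$. The substantive input — that $\overline{N}^Z \supseteq G^{(0)}$ for every nontrivial normal $N$ — is already packaged in Lemma \ref{lem:linear_ast}, so no further structural work with the decomposition $G^{(0)} = H^m$ is needed at this stage.
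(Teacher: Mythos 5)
Your proposal is correct and follows essentially the same route as the paper: deduce from the third formulation of strong irreducibility that $\rho(G^{(0)})$ is irreducible, invoke Lemma \ref{lem:linear_ast} to get $G^{(0)} < \overline{N}^Z$ and hence irreducibility of $\rho(N)$, and conclude via the span of the orbit $\{\rho(n)v\}$. The extra bookkeeping you supply (the closed stabilizer $P_W$ and the passage between a subgroup and its Zariski closure) is exactly what the paper leaves implicit.
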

\begin{proof}
By the equivalence of the conditions in the previous definition $\rho(G^{(0)})$ is irreducible. By Lemma \ref{lem:linear_ast} $\overline{N}^Z > G^{(0)}$ so that $\rho(N)$ is irreducible as well. Since $\rho(N)$ is irreducible we have $$\Span\{\rho(n) v \ | \ n \in N \} = K^m$$ so this set cannot be contained in the proper subspace $V$.
\end{proof}
\begin{lemma} \label{lem:space}
Let $k$ be a local field, $m \in \N$ and $\Gamma < \GL_n(k)$ a strongly irreducible group of almost simple type. Assume that $\Gamma$ contains a very proximal element $g$. Then $\Gamma$ contains a Schottky tuple $(\eta_1, \eta_2, \ldots, \eta_m)$ in general position. More generally if $(\gamma_1,\gamma_2, \ldots, \gamma_l)$ is a spacious Schottky tuple, then one can find a spacious Schottky tuple $(\eta_1, \eta_2, \ldots, \eta_m)$ in general position, such that $(\gamma_1,\gamma_2, \ldots, \gamma_l, \eta_1, \eta_2, \ldots, \eta_m)$  is still spacious Schottky. 
\end{lemma}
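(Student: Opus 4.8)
The plan is to realize the $\eta_i$ as conjugates of a fixed high power of the given very proximal element, $\eta_i = h_i\, g^N\, h_i^{-1}$, and to choose the conjugators $h_i \in \Gamma$ to force the M-flags into general position while choosing the exponent $N$ large to secure the ping-pong disjointness. The point is that this decouples the two requirements. By Lemma \ref{fix} the element $g^N$ is proximal with respect to the \emph{same} canonical attracting point $\overline{v}_g$ and repelling hyperplane $\overline{H}_g$ as $g$, only with a much smaller contraction parameter $\epsilon$. Hence the M-flag of $\eta_i$ is $\alpha(\eta_i) = (h_i \overline{H}_g, h_i \overline{v}_{g^{-1}})$, which depends only on the conjugator $h_i$ and not on $N$. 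Thus placing the flags in general position is purely a matter of choosing the $h_i$, whereas shrinking the attracting and repelling neighborhoods is handled afterwards by enlarging $N$.

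For the general position of the family $\{\alpha(\eta_i)\}$ (and of the inverse family) one has to satisfy a finite list of non-degeneracy conditions: every $n$ of the attracting points must span $\PP(k^n)$, and every $n$ of the repelling hyperplanes must intersect trivially. Each condition is the complement of a proper Zariski-closed ``bad'' locus, and I realize it using the strong irreducibility of $\Gamma$: applying Lemma \ref{lem:escape} to the standard representation with $N = \Gamma$ shows any nonzero vector can be pushed out of any prescribed hyperplane by some element of $\Gamma$, and applying it to the dual representation does the same for hyperplanes. I then proceed inductively: having placed $\alpha(\eta_1),\dots,\alpha(\eta_k)$ in general position, I choose $h_{k+1}$ so that the new attracting point $h_{k+1}\overline{v}_{g^{-1}}$ avoids the finitely many hyperplanes spanned by $(n-1)$-subsets of the earlier points, and so that the new repelling hyperplane is transverse to the finitely many subspaces cut out by the earlier hyperplanes. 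Each single avoidance is achievable by Lemma \ref{lem:escape}, and finitely many can be met at once because a finite union of proper subvarieties is still proper.

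Once the fixed data are in general position — in particular the attracting points are pairwise distinct and lie off every repelling hyperplane — I invoke Lemma \ref{fix} again: raising to a large power $N$ makes every $\eta_i$ proximal with arbitrarily small $\epsilon$, so the attracting and repelling sets (images under the bi-Lipschitz $h_i$ of shrinking neighborhoods of $\overline{v}_g,\overline{H}_g$) can be taken pairwise disjoint in the sense demanded by Lemma \ref{lem:ping-pong}; this exhibits $(\eta_1,\dots,\eta_m)$ as a Schottky tuple in general position. For the relative statement, when choosing the $h_i$ I impose the additional demand that the fixed data of the $\eta_i$ avoid the finitely many attracting and repelling sets of the given $\gamma_1,\dots,\gamma_l$ and of the spaciousness witness $\zeta$ (with $(\gamma_1,\dots,\gamma_l,\zeta)$ Schottky); this is again an avoidance of proper closed sets, hence handled by the same irreducibility argument. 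Taking $N$ large then makes the $\eta$-neighborhoods disjoint from those of the $\gamma$'s as well, so $(\gamma_1,\dots,\gamma_l,\eta_1,\dots,\eta_m)$ satisfies the ping-pong conditions and is Schottky. Spaciousness throughout is obtained by manufacturing one further conjugate $\eta_{m+1}$ by the identical recipe, in general position and with neighborhoods disjoint from everything already chosen; it serves as the extra element witnessing that both $(\eta_1,\dots,\eta_m)$ and $(\gamma_1,\dots,\gamma_l,\eta_1,\dots,\eta_m)$ are spacious.

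The main obstacle is the simultaneous bookkeeping of all the avoidance constraints, and in particular verifying the quantitative ping-pong inequalities not over a single local field but over every intermediate field $k(\Delta) < k < K$. The decoupling above is what makes this tractable: the positions of the attracting points and repelling hyperplanes are fixed first as a Zariski-open (field-independent) condition over $K$ via Lemma \ref{lem:escape}, and only then is $N$ used to shrink neighborhoods. Since the contraction estimates of Lemmas \ref{fix} and \ref{lem:equiv_contraction} are uniform in the field, a single large $N$ secures the disjointness across all intermediate fields at once, which is the delicate step that the decoupling resolves.
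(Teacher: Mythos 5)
Your construction for the absolute case ($l=0$) is essentially the paper's own proof: conjugate $g$ by elements of $\Gamma$ produced via strong irreducibility to force the attracting points and repelling hyperplanes into general position (a finite list of avoidances of proper projective subspaces), and only afterwards raise to a high power $N$ to shrink the attracting and repelling neighbourhoods and secure the ping-pong disjointness. The decoupling you describe, and the observation that the canonical flag of $h g^N h^{-1}$ is $(h\overline{H}_g, h\overline{v}_{g^{-1}})$ independently of $N$, is exactly how the paper proceeds, and that part is fine.

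The relative statement is where there is a genuine gap. You write that the fixed data of the new elements can be made to ``avoid the finitely many attracting and repelling sets of the given $\gamma_1,\dots,\gamma_l$'' and that ``this is again an avoidance of proper closed sets, hence handled by the same irreducibility argument.'' But the attracting and repelling sets of the $\gamma_j$ are fixed metric neighbourhoods with nonempty interior, not proper Zariski-closed subsets; Lemma \ref{lem:escape} only lets you push a point off a prescribed linear subspace, and strong irreducibility gives no control over the metric position of an orbit relative to a given open set. The problem is most acute for the repelling hyperplanes: the ping-pong condition of Lemma \ref{lem:ping-pong} requires the attracting set $\mathcal{A}(\gamma_j)$ (a fixed open neighbourhood) to be disjoint from a neighbourhood of the hyperplane $h_i\overline{H}_g$, and no choice of large $N$ shrinks that hyperplane --- you would need $h_i\overline{H}_g$ to miss an entire fixed open set, which your avoidance machinery does not deliver. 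The paper circumvents this precisely by exploiting spaciousness through domination: it first builds the general-position tuple $(\eta_1',\dots,\eta_m')$ so that it plays ping-pong together with a high power of the witness $\zeta$ (the extra element making $(\gamma_1,\dots,\gamma_l,\zeta)$ Schottky), and then conjugates, setting $\eta_i=\zeta\eta_i'\zeta^{-1}$. This forces $\mathcal{A}(\eta_i^{\pm})\subset\mathcal{A}(\zeta)$ and $\mathcal{R}(\eta_i^{\pm})\subset\mathcal{R}(\zeta)$, and since $\mathcal{A}(\zeta),\mathcal{R}(\zeta)$ are already disjoint from all the data of the $\gamma_j$, the combined tuple is Schottky with no further avoidance needed. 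Your argument needs this (or an equivalent) step; as written, the passage from the absolute to the relative case does not go through.
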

\begin{proof}
We start with the first statement, namely with the special case $l=0$. Assume by induction that we already have a spacious Schottky tuple $\{\beta_i \ | \ i \in I \}$ whose corresponding attracting points and repelling hyperplanes satisfy all of the conditions implied so far. Namely:
\begin{itemize}
\item $\dim \left(\bigcap_{j \in J} \overline{H}^{+}_{j} \right) = n-|J|-1, \ \forall J \subset I$ 
\item $\dim(\Span\{\overline{v}^{-}_{i} \ | \ j \in J \}) = |J|-1, \ \forall J \subset I, |J| \le n$
\item $\overline{v}^{\eta}_{i} \not \in \overline{H}^{\epsilon}_{j}$ whenever either $i \ne j$ or $\epsilon = \eta$
\end{itemize}
Here $\dim$ denotes projective dimension, and we have adopted the convention that a negative dimension corresponds to an empty set. As the basis of the induction we can take $\beta_1$ to be any conjugate of $g$.

Now let us fix a finite set of points $A \subset \PP(k^n)$ such that $$\{\overline{v}_{i}^{\eta} \in A, \  | \ i \in I, \eta \in \pm 1\} \subset A$$ and also $A \bigcap \left(\bigcap_{j \in J} \overline{H}_{j}^{\eta} \right) \ne \emptyset,$ for every subset $J \subset I$ with $|J| < n,$ and every $\eta \in \{\pm 1\}$. Since $\Gamma$ is strongly irreducible we can find an element $\gamma \in \Gamma$ subject to the following two conditions:
\begin{itemize}
\item $A \bigcap \gamma \overline{H}_g^{\epsilon} = \emptyset, \ \forall \epsilon \in \pm 1$. 
\item $\gamma \overline{v}_{g}^{-} \not \in \bigcup_{\substack{J \subset I \\ |J| < n}}\Span \{\overline{v}_{j}^{-} \ | \ j \in J\}.$
\end{itemize}

Setting $\beta_{p+1} = \gamma g \gamma^{-1}$ we have $\overline{H}_{\beta_{p+1}}^{\epsilon} = \gamma \overline{H}_{g}^{\epsilon}$ and  $\overline{v}_{\beta_{p+1}}^{\epsilon} = \gamma \overline{v}_{g}^{\epsilon}, \ \forall \epsilon \in \pm 1$. After $m$ such steps we obtain a collection of very proximal elements whose attracting points and repelling hyperplanes are subject to all of the desirable conditions mentioned above. We obtain the desired ping-pong tuple by setting $\eta_i = \beta_i^N$ for a high enough value of $N$.

Now assume $l \ne 0$ and let $g$ be the proximal element such that $(\gamma_1,\gamma_2, \ldots, \gamma_l,g)$ is Schottky. Let us construct $(\eta_1', \eta_2', \ldots, \eta_n')$ as a spacious Schottky tuple in general position, just as we did in the last paragraph. By taking care we may assume that these were constructed in such a way that $(g^n, \eta'_1, \eta'_2, \ldots, \eta'_n)$ is still Schottky. Now setting $\eta_i = g \eta_i' g^{-1}$ we obtain a Schottky tuple in general position $(\eta_1,\eta_2,\ldots, \eta_n)$ subject to the additional condition that $\Ac(\eta_i^{\pm}) \subset \Ac(g), \Rc(\eta_i^{\pm}) \subset \Rc(g)$. This  implies that $(\gamma_1,\gamma_2, \ldots, \gamma_l, \eta_1, \eta_2, \ldots, \eta_l)$ is as required. 
\end{proof}

The main ingredient in the method we use for generating free subgroups is a projective representation whose image contains contracting elements and acts strongly irreducibly. The following theorem is a particular case of Theorem 4.3 from \cite{BG:Topological_Tits}. Note that a similar statement appeared also earlier in \cite{MS:Maximal}.

\begin{theorem}\label{thm:good-representation}
Let $F$ be a field and $\mathbb{H}$ an algebraic $F$-group for which the connected component $\mathbb{H}^{\circ}$ is not solvable, and let $\Psi<\Gamma <\mathbb{H}$ be Zariski dense subgroups with $\Psi$ finitely generated and $\Gamma$ countable. Assume that $\Psi$ contains at least one element of infinite order. Then we can find a valued field $(K,v)$, an embedding $F \hookrightarrow K$, an integer $n$ and a strongly irreducible projective representation $\rho :\mathbb{H}(K)\rightarrow \PGL_{n}(K)$ defined over $K$ with the following properties. 
\begin{enumerate}
\item $(k(\Delta),v(\Delta))$ is a local field for every finitely generated subgroup $\Delta < \Gamma$.  
\item There exits an element $\psi \in \Psi$, such that $\rho(\psi)$ is a very proximal element on $\PP(k^n)$ for some parameters $(\epsilon,r)$ satisfying the stronger condition appearing in Lemma \ref{fix} for every intermediate local field $k(\Psi) < k < K$. 
 \end{enumerate}
\end{theorem}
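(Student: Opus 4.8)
\emph{Overall strategy.} Since $\Gamma$ is Zariski dense in $\mathbb{H}$ and $\mathbb{H}^{\circ}$ is not solvable, neither $\Gamma$ nor its finitely generated Zariski dense subgroup $\Psi$ is virtually solvable; in positive characteristic Schur's theorem (a finitely generated torsion linear group is finite) then forces $\Psi$ to contain an element of infinite order, so the last hypothesis is essentially bookkeeping. The proof rests on two independent engines. Engine (I) produces a single strongly irreducible representation over a local field in which some $\psi\in\Psi$ is very proximal with arbitrarily good parameters. Engine (II) spreads the coefficient field out to a (generally non-local) valued field $(K,v)$ so that every finitely generated $\Delta<\Gamma$ retains a local completion $k(\Delta)$, the former taking care of condition (2) and the latter of condition (1).

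\emph{Engine I: proximality and strong irreducibility (Tits' mechanism).} Following Tits \cite{Tits:alternative} and its quantitative refinement in \cite{BG:Topological_Tits}, I would pick $g\in\Psi$ of infinite order with an eigenvalue $\lambda$ that is not a root of unity, and choose a place $w$ of the finitely generated field $k(\Psi)$ at which $|\lambda|_w\neq 1$; completing gives a local field $k_0=k(\Psi)_w$ (a finite extension of $\mathbb{Q}_p$ or of $\mathbb{F}_q((t))$, or $\mathbb{R},\mathbb{C}$) in which the image of $g$ acquires a dominant Cartan direction. To turn this into genuine proximality I would take a representation $\rho$ of $\mathbb{H}$ whose restriction to $\mathbb{H}^{\circ}$ is irreducible of \emph{regular dominant} highest weight $\chi_0$ (such representations are produced in \cite{BG:Topological_Tits}); its top weight space is one–dimensional, the radical of $\mathbb{H}^{\circ}$ acts by scalars, and so projectively we see only the semisimple part. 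An element whose Cartan projection pairs strictly maximally with $\chi_0$ is sent to a proximal transformation, and replacing $\chi_0$ by a high multiple $N\chi_0$ widens the ratio $a_1/a_2$ of the first two singular values; by Lemma \ref{lem:equiv_contraction} this makes $[\rho(\psi)]$ $\epsilon$-contracting with $\epsilon\to 0$ and, after passing to a power of $\psi$ and treating $\psi^{-1}$ symmetrically, $(r,\epsilon)$-very proximal with $r\gg\epsilon$, in particular satisfying the stronger hypothesis $r\ge c_1\epsilon^{2/3}$ of Lemma \ref{fix}. Since $\chi_0$ is regular, $\rho$ is irreducible on $\mathbb{H}^{\circ}$; as $\Gamma$ is Zariski dense in $\mathbb{H}$ we get $\bigl(\overline{\rho(\Gamma)}^{Z}\bigr)^{(0)}=\rho(\mathbb{H}^{\circ})$ irreducible, i.e. $\rho$ is strongly irreducible.

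\emph{Engine II: the valued field and condition (1).} The point is that $K$ need not, and typically will not, be local; only its finitely generated slices must be. In characteristic $0$ I would extend the embedding $k(\Psi)\hookrightarrow k_0\subset\overline{\mathbb{Q}_p}$ to an embedding $k(\Gamma)\hookrightarrow\overline{\mathbb{Q}_p}$, possible because $\overline{\mathbb{Q}_p}$ is algebraically closed of transcendence degree the continuum while $k(\Gamma)$ is countable. Taking $K=\mathbb{C}_p$ with its $p$-adic valuation $v$, every finitely generated $\Delta<\Gamma$ has $k(\Delta)$ generated over the prime field by finitely many elements of $\overline{\mathbb{Q}_p}$, hence lies in a finite extension of $\mathbb{Q}_p$, so its completion $k(\Delta)_v$ is local; this is (1), and $K=\mathbb{C}_p$ is not itself local. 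The positive–characteristic case is identical with $\overline{\mathbb{Q}_p}$ and $\mathbb{C}_p$ replaced by $\overline{\mathbb{F}_q((t))}$ and the completion of its algebraic closure. The eigenvalues of $\psi$, and hence its proximality data, are fixed already by the embedding of the finitely generated field $k(\Psi)$ and are untouched by the extension; since the singular–value description of proximality in Lemma \ref{lem:equiv_contraction} is insensitive to scalar extension, $[\rho(\psi)]$ stays $(r,\epsilon)$-very proximal on $\PP(k^n)$ for \emph{every} intermediate local field $k(\Psi)<k<K$, which is (2).

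\emph{Main obstacle.} The delicate point is Engine II: producing a single valuation $v$ on the large field $K$ whose restriction to each of the countably many finitely generated coefficient fields yields a \emph{local} completion, while remaining compatible with the place $w$ responsible for proximality. The Abhyankar-type obstruction, namely that a transcendental extension of a local field admits no rank–one valuation with both finite residue field and discrete value group, is exactly what forces one to embed $k(\Gamma)$ \emph{algebraically} over $k_0$ inside $\overline{\mathbb{Q}_p}$ rather than to extend $w$ naively, and is the reason the statement is phrased to allow $K$ to be non-local. Everything else—the reduction to the semisimple part at the level of the projective representation, the highest–weight proximality criterion, and the persistence of proximality under scalar extension—is routine once this field-theoretic setup is in place.
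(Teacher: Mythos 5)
The paper does not actually prove this statement: it is quoted as a particular case of Theorem 4.3 of \cite{BG:Topological_Tits} (with a precursor in \cite{MS:Maximal}), so there is no in-paper argument to compare yours against. That said, your two ``engines'' are a fair description of the strategy underlying the cited result: Tits' place-selection argument to land in a local field where some element acquires a dominant singular direction, a regular dominant highest-weight representation to convert this into proximality while guaranteeing strong irreducibility, and an embedding of the countable coefficient field into $\mathbf{C}_p$ (or $\mathbf{C}$, or the completion of $\overline{\F_q((t))}$) so that every finitely generated slice closes up to a local field. Your identification of the non-locality of $K$ as the reason for the formulation of condition (1) is exactly right.

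Two steps are glossed over in a way that would be genuine gaps in a self-contained write-up. First, the existence of $g\in\Psi$ with an eigenvalue that is not a root of unity does not follow from $g$ having infinite order (unipotent elements in characteristic zero are the obvious counterexample); it is a consequence of $\Psi$ being finitely generated, Zariski dense, and hence not virtually solvable, via Tits' eigenvalue lemma: roots of unity lying in bounded-degree extensions of a finitely generated field have bounded order, so if every element had only root-of-unity eigenvalues the traces would take finitely many values and a Burnside--Schur argument would force virtual solvability, contradicting the non-solvability of $\mathbb{H}^{\circ}$. Second, passing to powers of $\psi$ improves the contraction parameter $\epsilon$ but does nothing for the separation $r$ between the attracting point and the repelling hyperplane, nor does it by itself make $\psi^{-1}$ proximal; to obtain an $(r,\epsilon)$-\emph{very} proximal element satisfying $r\ge c_1\epsilon^{2/3}$ one must exploit Zariski density to multiply or conjugate by auxiliary elements of $\Psi$ that put the attracting points and repelling hyperplanes of $\psi$ and $\psi^{-1}$ in general position --- the same manoeuvre the paper itself performs in Lemma \ref{lem:space}. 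Finally, in positive characteristic your ``identical'' is optimistic: one must also exclude the possibility that the coefficients of some finitely generated $\Delta$ are all algebraic over the prime field, in which case $k(\Delta)$ is finite rather than local; this is precisely where the non-torsion hypothesis (and the paper's caveat about $\PSL_2(\overline{\F_7})$) enters.
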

Where as above, $(k(\Delta),v(\Delta))$ denotes the closed subfield of $K$ generated by all matrix coefficients of $\rho(\Delta)$. 

\subsection{primitivity for linear groups of almost simple type}
\begin{theorem} \label{thm:ast1}
Let $\Gamma$ be a countable linear group of almost simple type, containing at least one element of infinite order. Then $\Gamma$ is primitive. 
\end{theorem}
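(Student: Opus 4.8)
The plan is to produce a \emph{free prodense} subgroup $F<\Gamma$ — free, and dense in the normal topology $\tau_N$, meaning $F\cdot N=\Gamma$ for every $\trivgp\ne N\lhd\Gamma$ — and then to extend it to a maximal subgroup. First I would record the reduction. Suppose $F$ is $\tau_N$-dense and $M<\Gamma$ is a maximal subgroup containing $F$. Then $\Core(M)=\bigcap_{\gamma}\gamma M\gamma^{-1}$ is a normal subgroup contained in $M$; were it nontrivial, density would give $\Gamma=F\cdot\Core(M)\subseteq M\cdot\Core(M)=M$, contradicting $M\ne\Gamma$. Hence $M$ is core-free, so the action $\Gamma\acts\Gamma/M$ is faithful and primitive, and since $\Gamma$ is infinite it is automatically of infinite index (an infinite group cannot act faithfully on a finite set). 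So everything reduces to constructing $F$ and extending it to a maximal subgroup.

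Second, I would set up the dynamics. Using Lemma \ref{lem:linear_ast} to pass to the semisimple adjoint picture and Theorem \ref{thm:good-representation} (whose hypothesis is met because $\Gamma$ contains an element of infinite order), I would replace the given embedding by a strongly irreducible projective representation $\rho:\Gamma\to\PGL_n(K)$ over a valued field $(K,v)$ such that $k(\Delta)$ is local for every finitely generated $\Delta<\Gamma$ and such that some $\rho(\psi)$ is very proximal with parameters good enough for Lemma \ref{fix}. In this representation Lemma \ref{lem:escape} applies: $\rho(N)$ is (strongly) irreducible for every $\trivgp\ne N\lhd\Gamma$; since moreover $\overline{\rho(N)}^Z\supseteq\rho(G^{(0)})$ acts proximally, each such $N$ itself contains very proximal elements, and by irreducibility their attracting points and repelling hyperplanes range over a dense set of positions in $\PP(k^n)$.

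Third — and this is the crux — I would build $F$ as an infinite Schottky group realising the coset requirements for density. Enumerate all pairs $\{(\gamma_k,g_k)\}_{k\in\N}$ with $\gamma_k\in\Gamma$ and $g_k\ne e$; producing, for each $k$, a generator $w_k\in\gamma_k\llangle g_k\rrangle$ yields $F\cdot\llangle g\rrangle=\Gamma$ for every $g\ne e$, hence $F\cdot N=\Gamma$ for every $\trivgp\ne N\lhd\Gamma$ (choose $e\ne g\in N$). To place $w_k$ in the prescribed coset and make it very proximal at once, I would pick a very proximal $u\in N_k:=\llangle g_k\rrangle$ and a high power and set $w_k=\gamma_k u^{M}$: then $w_k\in\gamma_k N_k$, while for large $M$ the element $w_k$ is very proximal, its attracting point lying near $\gamma_k\overline{v}_u$ and its repelling hyperplane near $\overline{H}_u$, and symmetrically for $w_k^{-1}$. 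Choosing $u$ among the proximal elements of $N_k$ (via Lemma \ref{lem:escape}) lets me put all four pieces of dynamical data in general position and disjoint from the finitely many already placed; Lemma \ref{fix} guarantees that passing to a high power shrinks these neighbourhoods, so the ping-pong hypotheses of Lemma \ref{lem:ping-pong} are preserved at every finite stage, and Lemmas \ref{lem:space} and \ref{lem:subsch} keep the growing tuple spacious Schottky and in general position across the varying intermediate local fields. The resulting $\{w_k\}$ is then an infinite Schottky tuple, so $F=\langle w_k\mid k\in\N\rangle$ is free and, by construction, prodense.

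Finally I would extend $F$ to a maximal subgroup $M\supseteq F$ and invoke the reduction of the first paragraph. I expect the main obstacle to be twofold. The serious technical difficulty is the simultaneous control in the third step: forcing each generator into a prescribed coset (an ``arithmetic'' constraint) while keeping the entire infinite family in ping-pong position (a ``dynamical'' constraint), all while the ambient local field $k(\Delta)$ changes as the tuple grows — this is exactly what the spaciousness and general-position machinery is engineered to handle. The second, more structural, point is the extension to a maximal subgroup: for finitely generated $\Gamma$ this is immediate from Zorn's lemma, since a chain of proper subgroups containing $F$ cannot exhaust a finitely generated group; for general countable $\Gamma$ one must take additional care in the construction to ensure that $F$ sits inside a proper subgroup to which Zorn's lemma can be applied.
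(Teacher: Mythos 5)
Your overall strategy --- a prodense Schottky subgroup, then a maximal subgroup containing it, then the core-freeness reduction --- is exactly the paper's, and your first two steps (the reduction and the dynamical setup via Lemma \ref{lem:linear_ast}, Theorem \ref{thm:good-representation} and Lemma \ref{lem:escape}) are correct. But there is a genuine gap at the very point you flag and then leave unresolved: the passage from a prodense proper subgroup to a \emph{maximal} one. For a countable group that is not finitely generated, Zorn's lemma does not apply to the poset of proper subgroups containing $F$, because an increasing chain of proper subgroups can exhaust the whole group (as in $\Q=\bigcup_n \frac{1}{n!}\Z$). Since Theorem \ref{thm:ast1} is precisely about general countable linear groups --- the finitely generated case being already covered by Theorem \ref{thm:MS} --- this is not a technicality you can defer; without it the proof does not close. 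The paper's fix is to interleave a second, independent family of requirements into the induction: besides hitting every coset $\gamma\llangle n\rrangle$ (your odd steps, which give prodensity), the constructed group $\Delta$ is forced to meet every double coset $\Sigma\theta\Sigma$ of a fixed finitely generated auxiliary Schottky subgroup $\Sigma=\langle\sigma_1,\dots,\sigma_{2n-1},\zeta\rangle$. This makes $\Delta$ \emph{cofinitely generated}, $\Gamma=\langle\Delta,\Sigma\rangle$, and then the union of any chain of proper subgroups above $\Delta$ is proper (it cannot contain the finitely generated $\Sigma$ without some member of the chain already doing so), so Zorn's lemma applies. Lemma \ref{lem:NoTouch} and the general-position hypothesis on $\Sigma$ are exactly what make these even steps compatible with the ping-pong constraints.

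A secondary, lesser issue is your mechanism for producing the coset representatives $w_k$. You propose to find a very proximal element $u$ inside $N_k=\llangle g_k\rrangle$ with dynamical data in general position and set $w_k=\gamma_k u^M$. Lemma \ref{lem:escape} only gives irreducibility of $\rho(N_k)$; it does not by itself supply very proximal elements of $N_k$ with attracting points and repelling hyperplanes at prescribed locations, and justifying that would need an additional Abels--Margulis--So\u\i fer-type argument. The paper sidesteps this: it uses Lemma \ref{lem:escape} only to choose elements $n_1,n_2,n_3\in\llangle n_\ell\rrangle$ satisfying finitely many transversality conditions, and manufactures proximality by sandwiching them between high powers of the already-placed auxiliary generators, as in $\eta_\ell=\zeta^{\ell}\sigma_1^N n_3\sigma_1^{-N}\zeta^{-\ell}n_2\gamma_\ell\zeta^{\ell}\sigma_1^{-N}n_1\sigma_1^N\zeta^{-\ell}$, which lies in $\gamma_\ell\llangle n_\ell\rrangle$ and is dominated by $\sigma_1$, so Lemma \ref{lem:subsch} keeps the tuple Schottky. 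If you adopt that device, your odd steps go through; the double-coset steps remain the essential missing ingredient.
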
 
Let $\mathbb{H} = \overline{\Gamma}^{Z}$ be the  Zariski closure. By dimension considerations we can find a finitely generated subgroup $\Psi < \Gamma$ with the same Zariski closure. Obviously $\Psi$ contains an element of infinite order. We now apply Theorem \ref{thm:good-representation} and fix once and for all the data that this theorem yields. In fact we will just identify $\Gamma$ with its image under this representation $\rho(\Gamma)$ in order to avoid cumbersome notation. Once we have fixed this representation we will denote by $X(\Gamma) = X^{\rho}(\Gamma)$ the collection of all subgroups whose image under this fixed representation is locally spacious Schottky. With all this notation in place Theorem \ref{thm:ast1} will follow from the following, slightly more general statement. 

\begin{theorem} \label{thm:ast2}
Any finitely generated spacious Schottky subgroup $D < \Gamma$ is contained in a maximal core-free subgroup $D < M < \Gamma$. 
\end{theorem}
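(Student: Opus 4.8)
The plan is to derive the theorem from a construction of a free subgroup together with a soft application of Zorn's lemma. Call a subgroup $F<\Gamma$ \emph{prodense} if $FN=\Gamma$ for every $\trivgp\ne N\lhd\Gamma$, i.e. if $F$ is dense in the normal topology $\Top{N}$. Two easy reductions isolate what must really be built. First, if $F$ is prodense and $F\le M\lneqq\Gamma$, then $M$ is core-free: its normal core $\Core(M)$ is a normal subgroup contained in $M$, so if $\Core(M)\ne\trivgp$ we would get $\Gamma=M\,\Core(M)=M$, absurd. Second, if $F$ is proper and $\Gamma=\langle F,\gamma_0\rangle$ for some $\gamma_0\notin F$, then $F$ lies in a maximal proper subgroup: in any chain of proper subgroups containing $F$ whose union were $\Gamma$, the single element $\gamma_0$ would already lie in some member $H$, forcing $H\supseteq\langle F,\gamma_0\rangle=\Gamma$; hence every chain has a proper upper bound and Zorn applies. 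Thus it suffices to construct a free subgroup $F$ with $D<F<\Gamma$ that is prodense and satisfies $\langle F,\gamma_0\rangle=\Gamma$ and $\gamma_0\notin F$ for some fixed $\gamma_0\in\Gamma$; the resulting $M$ then obeys $D<F\le M<\Gamma$ and is maximal and core-free.

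To build $F$ I would exhaust a countable list of requirements while keeping the group under construction a finitely generated spacious Schottky subgroup, so that its increasing union is locally spacious Schottky, hence free and in $X(\Gamma)$. Since $\{\llangle g\rrangle : e\ne g\in\Gamma\}$ is a countable neighbourhood basis at the identity for $\Top{N}$, and any nontrivial $N$ contains $\llangle g\rrangle$ for $g\in N\setminus\{e\}$, it is enough to make $F$ meet every coset $\gamma\,\llangle g\rrangle$; this forces $FN=\Gamma$ for all $N$. Fixing an enumeration $\Gamma=\{\gamma_0,\gamma_1,\ldots\}$, I would interleave three families of tasks along an induction $D=D_0<D_1<D_2<\cdots$, each $D_{t+1}$ obtained from $D_t$ by adjoining one very proximal generator: (density) for each pair $(\gamma_i,g_j)$, arrange $F\cap\gamma_i\,\llangle g_j\rrangle\ne\emptyset$; (generation) for each $n\ge1$, arrange $\gamma_n\in\langle F,\gamma_0\rangle$; (properness) keep $\gamma_0$ outside $D_t$. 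Then $F=\bigcup_t D_t$.

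The crux, and where I expect the main obstacle, is the adjunction step. Given the current spacious Schottky tuple with witness $\zeta$ and a target coset $\gamma N$ (with $N=\llangle g\rrangle$ for a density task), I must produce a very proximal $w\in\gamma N$ whose attracting point and repelling hyperplane are prescribed in general position relative to all M-flags already present and are dominated by $\zeta$, so that by Lemma \ref{lem:ping-pong} together with the general-position and domination bookkeeping of Lemmas \ref{lem:space} and \ref{lem:subsch} the enlarged tuple is again spacious Schottky. To manufacture $w$ I would first exhibit a very proximal element inside $N$: starting from a very proximal $g_\ast\in\Gamma$ (supplied by Theorem \ref{thm:good-representation}) and using that $\rho(N)$ is irreducible by Lemma \ref{lem:escape}, I can choose $n\in N$ moving the relevant attracting point off the relevant repelling hyperplane, whence the commutator-type element $g_\ast^{\,s} n g_\ast^{-s}\in N$ is very proximal for large $s$ by the contraction estimates of Lemmas \ref{fix} and \ref{lem:equiv_contraction}. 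Conjugating this anchor by a further element of $N$ places its M-flag generically (again by irreducibility of $\rho(N)$), and left multiplication by $\gamma$ yields $w\in\gamma N$ with freely prescribable proximal dynamics; a high power shrinks its attracting and repelling neighbourhoods enough to realise domination by $\zeta$ and disjointness from the finitely many existing sets. For the generation tasks the same recipe applies with the anchor taken to be a conjugate of $\gamma_0$: setting $w=\gamma_n\,\theta\gamma_0^{\,s}\theta^{-1}$ with $\theta$ already available gives $\gamma_n=w\,\theta\gamma_0^{-s}\theta^{-1}\in\langle F,\gamma_0\rangle$, and the same general-position freedom lets me dodge $\gamma_0$ and keep it outside every $D_t$.

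Passing to the limit, $F=\bigcup_t D_t$ is locally spacious Schottky, hence free; by construction it is prodense, satisfies $\langle F,\gamma_0\rangle=\Gamma$, and omits $\gamma_0$, so the two reductions yield the desired maximal core-free $M$ with $D<M<\Gamma$. The two delicate points I would treat carefully are the uniform maintenance of spaciousness along the infinite induction, so that a fresh witness always survives, and the verification that a prescribed-position very proximal element can indeed be found in an \emph{arbitrary} nontrivial coset $\gamma N$; this last point is exactly where the almost simple hypothesis is used, entering through the irreducibility of $\rho(N)$ furnished by Lemmas \ref{lem:linear_ast} and \ref{lem:escape}.
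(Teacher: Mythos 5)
Your skeleton matches the paper's: build a prodense, cofinitely generated, proper (spacious Schottky) subgroup containing $D$ by an infinite ping-pong induction, then apply Zorn; the two reductions and the use of the cosets $\gamma\llangle g\rrangle$ as a neighbourhood basis for the normal topology are exactly as in the paper. The gaps are in the dynamical heart of the adjunction step. First, $g_{*}^{s}ng_{*}^{-s}$ is a conjugate of $n$ and has the same eigenvalues, so it is never proximal unless $n$ already is (think of $n$ unipotent or elliptic); contraction is not created by conjugation, however large $s$ is. The paper's element is
$\eta_{\ell}=\zeta^{\ell}\sigma_{1}^{N}n_{3}\sigma_{1}^{-N}\zeta^{-\ell}\,n_{2}\gamma_{\ell}\,\zeta^{\ell}\sigma_{1}^{-N}n_{1}\sigma_{1}^{N}\zeta^{-\ell}$: the high powers of the proximal $\sigma_1$ sit at the two ends of a genuine product (not a conjugation of a single factor), so the product inherits their contraction, while $n_{1},n_{2},n_{3}\in\llangle n_{\ell}\rrangle$, supplied by Lemma \ref{lem:escape}, only serve to knock attracting points off repelling hyperplanes, and normality keeps the whole word in the coset $\gamma_{\ell}\llangle n_{\ell}\rrangle$. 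A commutator such as $g_{*}^{s}ng_{*}^{-s}n^{-1}$ could be salvaged, but the conjugate as written cannot.

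Second, the cofinite-generation step anchored at a single $\gamma_{0}$ does not go through. Beyond the fact that the first element of an arbitrary enumeration need not be proximal, the real obstruction is transversality: to make $\gamma_{n}\,\theta\gamma_{0}^{s}\theta^{-1}$ very proximal you need $\gamma_{n}^{\pm1}$ to move the anchor's attracting flag off its repelling flag, and for a given $\gamma_{n}$ this can fail for every admissible $\theta$ --- here $\theta$ is confined to the group already built, which need not be Zariski dense, and unlike the density step there is no normal subgroup available to conjugate by while remaining in the relevant coset. This is exactly what the paper's auxiliary group $\Sigma=\langle\sigma_{1},\dots,\sigma_{2n-1},\zeta\rangle$ is for: its $2n-1$ generators have M-flags in general position, so Lemma \ref{lem:NoTouch} guarantees that for \emph{any} $\theta$ some $\alpha(\sigma_{i})$ fails to touch $\theta\alpha(\sigma_{1}^{-1})$, whence $\sigma_{i}^{n}\theta\sigma_{1}^{n}$ is very proximal and dominated (Lemma \ref{lem:subsch}); one then meets every double coset $\Sigma\theta\Sigma$, which yields $\Gamma=\langle\Delta,\Sigma\rangle$ and hence cofinite generation. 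Without a device of this kind your induction can get stuck at a generation task, so the proposal as it stands has a genuine gap.
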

\begin{proof} 
By Proposition \ref{prop:cfgpd} It suffices to construct a subgroup $D < \Delta < \Gamma$ which is prodense and cofinitely generated. As a feature of the proof the group $\Delta$ we construct will be Schottky, and in particular $\Delta \in X(\Gamma)$. 

Let $(\delta_1, \delta_2, \ldots, \delta_m)$ be a Schottky generating set for the subgroup $D$. Let $D < D' < \Gamma$ be a larger (not necessarily Schottky) finitely generated subgroup that has the same Zariski closure as $\Gamma$. Letting $k = k(D')$ be the local field generated by the matrix coefficients of $D'$, this implies that $\rho(\Gamma) \bigcap \GL_n(k)$ is strongly irreducible. Appealing to Lemma \ref{lem:space} we can construct a spacious Schottky tuple $(\delta_1, \delta_2, \ldots, \delta_m, \sigma_1,\ldots, \sigma_{2n-1}, \zeta)$ with $\Sigma = \langle \sigma_1, \ldots, \sigma_2,\ldots, \sigma_{2n-1}, \zeta \rangle$, a Schottky subgroup whose generators are in general position, as in Definition \ref{def:Mflag}. The last element is denoted differently just because it will play a separate role in the proof. We set $\Delta_0 :=D, k_0 = k(\langle \Delta_0, \Sigma \rangle)$ and in $\PP(k_0^n)$ we denote by $\overline{H}_i^{\pm} = \overline{H}_{\sigma_i}^{\pm}, \overline{v}_i^{\pm} = \overline{v}_{\sigma_i}^{\pm}, \overline{H}^{\pm} = \overline{H}_{\zeta}^{\pm}, \overline{v}^{\pm} = \overline{v}_{\zeta}^{\pm}$ the attracting points and repelling hyperplanes of the corresponding ping pong game, all defined over $k_0$. We did not name the attracting points and repelling neighborhoods of the $\delta_i$'s, as we will not refer to them explicitly.

Let us use odd indices $\{ \gamma_1 \langle \langle n_1 \rangle \rangle,  \gamma_3 \langle \langle n_3 \rangle \rangle,  \gamma_5 \langle \langle n_5 \rangle \rangle, \ldots \}$ to list all cosets of all these normal subgroups that are generated by one, nontrivial, conjugacy class. Since any nontrivial normal subgroup contains one of these, the collection of these cosets forms a basis for the normal topology on $\Gamma$. So a subgroup intersecting all of them nontrivially will be prodense. Similarly we use even indices $(\Sigma \theta_2 \Sigma, \Sigma \theta_4 \Sigma, \ldots)$ to enumerate the nontrivial double-cosets of $\Sigma$ in $\Gamma$. A subgroup $\Delta < \Gamma$ that has a nontrivial intersection with all of these double cosets will be cofinitely generated by virtue of the fact that $\Gamma = \langle \Delta, \Sigma \rangle$. 

We will construct an infinitely generated Schottky group, 
$$\Delta = \langle \delta_1, \delta_2, \ldots, \delta_m, \eta_1, \eta_2, \ldots \rangle $$
such that $\eta_i \in \gamma_i \langle \langle n_i \rangle \rangle$ for every odd $i$ and $\eta_i \in \Sigma \theta_i \Sigma$ for every even $i$. This will be done by induction on $i$, with $\Delta_i := \langle D, \eta_1, \eta_2, \ldots, \eta_i \rangle$, the group constructed at the $i^{th}$ step and $k_i = k(\langle \Delta_i, \Sigma \rangle)$ the corresponding local subfield of $K$. We obtain a sequence of local fields $k_0 < k_1 < k_2 < \ldots < K$ with corresponding, direct sequence of projective spaces $\PP(k_0^n) \subset \PP(k_1^n) \subset \PP(k_2^n) \subset  \ldots \subset \PP(K^n)$. The generators of $\Delta_i$ will form a ping pong tuple in their action on $\PP(k_i^n)$. By Lemma \ref{lem:equiv_contraction}, the canonical attracting and repelling neighborhoods of the ping pong players $\eta_i$ will be of the form $\left(\overline{H}_{\eta_i}^{\pm} \right)_{\epsilon_i}, \left(\overline{v}_{\eta_i}^{\pm} \right)_{\epsilon_i}$. These are all defined over $k_i$, in the sense that $v_i \in k_i^n$, $H_i$ has a basis consisting of $(n-1)$ vectors in $k_i^n$ and $\epsilon$ is determined, by the singular values of $\eta_i$ which are inside $k_i$. Thus the same elements will form a ping-pong tuple also on $\PP(k_j)$ for any $j > i$. The attracting points and repelling hyperplanes in the extended vector space are obtained by an extension of scalars and $\epsilon_i$ remains the same. 

Assume we constructed $\Delta_{\ell} = \langle\delta_1, \ldots, \delta_m, \eta_1, \eta_2, \ldots, \eta_{\ell-1} \rangle < \PGL_n(k_{\ell-1})$ satisfying:
\begin{itemize}
\item $\eta_i \in \gamma_{i} \langle \langle n_i \rangle \rangle$ if $i$ is odd and $\eta_i \in \Sigma \theta_i \Sigma$ if $i$ is even. 
\item $\eta_{i} = \zeta^{i} q_{i} \zeta^{-i}$ with $q_{i}$ very proximal and dominated by some nontrivial element in $\langle \sigma_1, \sigma_2, \ldots, \sigma_{2n-1} \rangle < \Sigma$ for every $1 \le i < \ell$. 
\end{itemize}
The second condition guarantees that $\Delta_{\ell-1}$ is Schottky by Lemma \ref{lem:subsch}. 

When $\ell$ is odd, we are looking for an element $\eta_{\ell} \in \gamma_{\ell} \langle \langle n_{\ell} \rangle \rangle$. We extend scalars to $K$, but by abuse of notation we will identify $L_i^{\pm} < k_i^n$ with their scalar extensions $L_i^{\pm} \otimes_{k_{\ell-1}} K < K^n$ and similarly for $w_{i}^{\pm} \in k_{\ell-1}^n \subset K^n$. By Lemma \ref{lem:escape} we find $n_1,n_2,n_3 \in \langle \langle n_{\ell} \rangle \rangle$ such that $n_1^{\epsilon} \overline{v}_{1}^{+} \not \in \overline{H}_{1}^{-}, n_2 \gamma_{\ell} \overline{v}^{+} \not \in \overline{H}^{-}, n_2^{-1} \overline{v}^{+} \not \in \gamma \overline{H}^{-}, n_3^{\epsilon} \overline{v}_{1}^{-} \not \in \overline{H}_{1}^{+}, \ \forall \epsilon \in \{\pm 1\}$. Let $k_{\ell} = k(\langle \Delta_{\ell-1}, \Sigma, n_1,n_2,n_3 \rangle)$ and consider the element 
$$\eta_{\ell} = \zeta^{\ell} q_{\ell} \zeta^{-\ell} = \zeta^{\ell} \sigma_{1}^{N}n_3 \sigma_{1}^{-N} \zeta^{-\ell} n_2 \gamma_{\ell} \zeta^{\ell} \sigma_{1}^{-N}n_1 \sigma_{1}^{N} \zeta^{-\ell} \in \gamma_{\ell} \langle \langle n_{\ell} \rangle \rangle,$$  
where the auxiliary element $q_{\ell}$ is also defined by the above equation. Following the dynamics of the action of this element on $\PP(k_{\ell}^n)$ we see that, for $N$ large enough, $q_{\ell}$ is very proximal and dominated by $\sigma_1$. Thus by Lemma \ref{lem:subsch} the tuple $(\delta_1, \delta_2, \ldots, \delta_m, \eta_1, \eta_2, \ldots, \eta_{\ell}) = (\delta_1, \delta_2, \ldots, \delta_m, \zeta q_1 \zeta^{-1}, \ldots, \zeta^{\ell} q_{\ell} \zeta^{-\ell})$ is Schottky. Group theoretically it is easy to verify that $\eta_{\ell}$ thus defined belongs to the desired coset $\gamma_{\ell} \langle \langle n_{\ell} \rangle \rangle$.  

Next, assume that $\ell$ is even. Now our goal is to construct a ping-pong player of the form $\eta_{\ell} = \zeta^{\ell} q_{\ell} \zeta^{-\ell} \in \Sigma \theta_{\ell} \Sigma$. By Lemma \ref{lem:NoTouch} we can find some $i = i_{\ell}$ such that the M-flag $\alpha(\sigma_i)$ does not touch the M-flag $\theta_{\ell} \alpha(\sigma_1^{-1})$. Explicitly this means that $\overline{v}_{i}^{-} \not \in \theta_{\ell} \overline{H}_{1}^{-}$ and $\theta_{\ell} \overline{v}_{1}^{+} \not \in \overline{H}_{i}^{+}.$ These are exactly the conditions needed in order to ensure that, for a high enough value of $n = n_l$ the element $q_{\ell} := \sigma_{i}^{n} \theta_{l} \sigma_1^{n}$ is dominated by the element $\sigma_i^n \sigma_1^n \in \Sigma$. Now set $\eta_{\ell} : = \zeta^{l} \sigma_{i}^{n} \theta_{\ell} \sigma_1^{n} \zeta^{- \ell} \in \Sigma \theta_{\ell} \Sigma$. This concludes the even step of the induction and completes the proof.  
\end{proof}

\section{Counting maximal subgroups of $\SL_n(\Z)$}

When restricting the attention to $\SL_n(\Z),~n\ge 3$, one can make use of the arithmetic structure and the abundance of unipotent elements to produce $2^{\aleph_0}$ different maximal subgroups.
We follow the argument of \cite{GM:max}.
 
\subsection{Projective space}
Let $n \ge 3$. By $\BP(k^{n})$ we denote the $(n-1)$-dimensional real projective space endowed with the metric defined in Section \ref{sec:pt}. For every $0 \le k \le n-1$, the set $\mathbb{L}_k$ of $k$-dimensional subspaces of $\BP(k^{n})$ can be endowed with the metric defined by 
$$
 \dist_{\mathbb{L}_k}(L_1,L_2):=\max\{d(k^{n})(x,L_i) \mid x \in L_{3-i} \forall \ 1 \le i \le 2\}
 $$ 
 for every $L_1,L_2 \in \mathbb{L}_k$. Note that 
$\mathbb{L}_k$ is naturally homeomorphic to the Grassmannian $\mathrm{Gr}(k+1,\R^n)$. 


\subsection{Unipotent elements}
\begin{definition}[Rank 1 unipotent elements]\label{def - rank 1 unipotent} We say that a unipotent element $u$ has rank 1 if $\text{rank}(u-\textrm{I}_n)=1$. The point $p_u\in \BP(k^{n})$ which is induced by the euclidean line ${\{ux-x\mid x \in \R^n\}} $ is called the point of attraction of $u$.  The $(n-2)$-dimensional subspace  $L_u\subseteq \BP(k^{n})$ which is induced by the euclidean $(n-1)$-dimensional space ${\{x \in \R^n \mid ux=x\}}$ is called the fixed hyperplane of $u$. The set of rank-1 unipotent elements in $SL_n(\Z)$ is denoted by $\mathcal{U}$. 
\end{definition}

The following two lemmas follow directly from the definition of $\mathcal{U}$ and are stated for future reference. 

\begin{lemma}[Structure of unipotent elements]\label{lemma - structure of unipotent} 
The set $\mathcal{U}$ can be divided into equivalence classes in the following way: $u,v \in \mathcal{U}$ are equivalent if 
there exist non-zero integers $r$ and $s$ such that $u^s=v^r$.
The map $u \mapsto (p_u,L_u)$ is a bijection between equivalence classes in $\mathcal{U}$ and the set of pairs $(p,L)$ where $p \in \BP(k^{n})$ is a rational point and $ L \subseteq \mathbb{L}_{n-2}$ 
is an $(n-2)$-dimensional rational subspace which contains $p$.
\end{lemma}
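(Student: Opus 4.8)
The plan is to reduce the whole statement to elementary linear algebra of rank-one nilpotent matrices. For $u \in \mathcal{U}$ set $N_u := u - \mathrm{I}_n$; this is a rank-one nilpotent matrix, so it has the form $N_u = v\, w^{t}$ with $v$ a column vector spanning $\operatorname{im}(N_u)$ and $w^{t}$ a row covector whose kernel is $\ker(N_u)$. Nilpotency is equivalent to $w^{t}v = 0$, i.e. to $N_u^{2}=0$, which yields the identity $u^{s} = \mathrm{I}_n + s\,N_u$ for every integer $s$. In this language $p_u = \overline{v}$ is the projective point of the image line and $L_u = \mathbb{P}(\ker N_u)$ is the projective hyperplane of the fixed space. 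Since $u$ has integer entries, $N_u$ is integral, so $v$ and $w$ can be chosen primitive and integral; hence $p_u$ is a rational point, $L_u$ is a rational $(n-2)$-dimensional subspace, and $w^{t}v=0$ says exactly that $p_u \in L_u$. This already shows that $u \mapsto (p_u,L_u)$ takes values in the asserted target set.

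I would then check that $\sim$ is an equivalence relation; only transitivity needs comment, and it follows from $u^{ss'} = (v^{r})^{s'} = (v^{s'})^{r} = (w^{r'})^{r} = w^{rr'}$ whenever $u^{s}=v^{r}$ and $v^{s'}=w^{r'}$ with all exponents nonzero. Well-definedness of the induced map on classes uses the power identity above: $u^{s}=v^{r}$ becomes $s\,N_u = r\,N_v$, so $N_u$ and $N_v$ are nonzero scalar multiples of one another and therefore share image and kernel, giving $(p_u,L_u)=(p_v,L_v)$.

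Injectivity is the same computation reversed. If $(p_u,L_u)=(p_v,L_v)$ then $N_u$ and $N_v$ are rank-one maps with equal image and equal kernel, and such a map is determined up to a scalar by this pair, so $N_u = \lambda N_v$ with $\lambda \neq 0$; integrality of both matrices forces $\lambda \in \Q$, and clearing denominators produces nonzero integers $q,p$ with $q N_u = p N_v$, whence $u^{q} = v^{p}$ and $u \sim v$. For surjectivity I would reverse the normal-form construction: given a rational point $p=\overline{v}$ with $v$ a primitive integer vector and a rational $(n-2)$-dimensional subspace $L$ cut out by a primitive integral equation $w^{t}x=0$, the hypothesis $p \in L$ reads $w^{t}v=0$, and the matrix $u := \mathrm{I}_n + v\,w^{t}$ is integral, unipotent (all eigenvalues $1$, so $\det u = 1$), of rank-one type, and satisfies $p_u = p$ and $L_u = L$.

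The only step beyond bookkeeping is the translation between projective data and integral matrices — choosing primitive integer representatives for $p$ and for $L$, and clearing the denominator of $\lambda$ in the injectivity step — but these are the standard facts about rational points and rational subspaces of $\R^{n}$, so I do not expect any genuine obstacle; the entire lemma is essentially the observation that the pair $(\operatorname{im}, \ker)$ is a complete invariant of a rank-one matrix up to scalar, combined with the fact that unipotent powers only rescale $N_u$.
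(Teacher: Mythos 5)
Your proof is correct. The paper in fact states this lemma without proof (it is introduced with ``the following two lemmas follow directly from the definition of $\mathcal{U}$''), and your argument --- writing $u-\mathrm{I}_n = v\,w^{t}$ with $w^{t}v=0$, using $u^{s}=\mathrm{I}_n+s(u-\mathrm{I}_n)$, and observing that a rank-one matrix is determined up to a scalar by its image and kernel --- is precisely the elementary verification the authors leave to the reader, carried out completely and with the rationality and integrality bookkeeping done correctly.
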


\begin{lemma}[Dynamics of unipotent elements]\label{lemma - dynamic of unipotent} Let $u \in \mathcal{U}$.  For every $\varepsilon>0$ and every $\delta>0$ there exists a constant $c$ such that if 
$m \ge c$
and $v=u^m $, then  $v^{k}(x) \in (p_u)_{\varepsilon}$ for every $x \in \BP(k^{n})\setminus (L_u)_{\delta}$ and every  $k \ne 0$. Note that the previous lemma implies that $p_u=p_v$ and $L_u=L_v$.
\end{lemma}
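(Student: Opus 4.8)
The plan is to put the rank-one unipotent $u$ into its canonical normal form and then read off the dynamics by a direct computation with the standard projective metric. Since $\operatorname{rank}(u-\mathrm{I}_n)=1$, I would write $u(x)=x+\phi(x)\,w$ for a fixed vector $0\neq w\in\R^n$ and a fixed linear functional $\phi$; unipotence forces $(u-\mathrm{I}_n)^2=0$, i.e. $\phi(w)=0$. Comparing with Definition \ref{def - rank 1 unipotent}, the attracting point is $p_u=\overline{w}$ and the fixed hyperplane is $L_u=\BP(\ker\phi)$, and the identity $u^{j}=\mathrm{I}_n+j(u-\mathrm{I}_n)$ (valid because $(u-\mathrm{I}_n)^2=0$) gives for $v=u^m$ and any $k$ the explicit formula
$$v^{k}(x)=u^{mk}(x)=x+mk\,\phi(x)\,w.$$

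Next I would compute the distance to $p_u$ in the standard metric $d(\overline a,\overline b)=\frac{\|a\wedge b\|}{\|a\|\,\|b\|}$. The key algebraic observation is that $w\wedge w=0$, so
$$d\big(v^{k}(x),\,p_u\big)=\frac{\|(x+mk\phi(x)w)\wedge w\|}{\|x+mk\phi(x)w\|\,\|w\|}=\frac{\|x\wedge w\|}{\|x+mk\phi(x)w\|\,\|w\|},$$
and the numerator no longer depends on $m$ or $k$. It remains to bound the denominator from below. The hypothesis $x\notin(L_u)_\delta$ means $d(\overline x,L_u)\ge\delta$; since the distance from $\overline x$ to the hyperplane $\BP(\ker\phi)$ equals $\tfrac{|\phi(x)|}{\|\phi\|\,\|x\|}$, this translates into the uniform lower bound $|\phi(x)|\ge\delta\,\|\phi\|\,\|x\|$. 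In particular $\phi(x)\neq0$ on the relevant region. Using $|k|\ge1$ and $m\ge c$, the reverse triangle inequality gives
$$\|x+mk\phi(x)w\|\ \ge\ |mk|\,|\phi(x)|\,\|w\|-\|x\|\ \ge\ \|x\|\big(c\,\delta\,\|\phi\|\,\|w\|-1\big),$$
while $\|x\wedge w\|\le\|x\|\,\|w\|$, so that $d(v^{k}(x),p_u)\le\big(c\,\delta\,\|\phi\|\,\|w\|-1\big)^{-1}$ uniformly in $x$ and in $k\neq0$. Choosing $c$ large enough that $c\,\delta\,\|\phi\|\,\|w\|-1>\varepsilon^{-1}$ forces $v^{k}(x)\in(p_u)_\varepsilon$, which is the claim; note that $\|\phi\|$ and $\|w\|$ depend only on the fixed element $u$, so $c=c(u,\varepsilon,\delta)$ as required.

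I do not expect a serious obstacle here: once the normal form is chosen, the cancellation $w\wedge w=0$ does essentially all the work, and the remaining points require only bookkeeping. First, one must convert the geometric condition ``$\overline x$ is $\delta$-far from $L_u$'' into the analytic lower bound on $|\phi(x)|$, and second, one must check that a single $c$ works simultaneously for every $k\neq0$, which is immediate from $|mk|\ge m\ge c$. The statement $p_u=p_v$, $L_u=L_v$ is automatic from $v-\mathrm{I}_n=m(u-\mathrm{I}_n)$ together with Lemma \ref{lemma - structure of unipotent}. The only genuinely metric-dependent step is the formula $d(\overline x,L_u)=\tfrac{|\phi(x)|}{\|\phi\|\,\|x\|}$; I would carry out the computation for the standard (wedge) metric and then transfer the conclusion to the chosen compatible metric $\dist_{\BP(k^{n})}$, using that on the compact space $\BP(k^{n})$ the set $\{\overline x : \dist_{\BP(k^{n})}(\overline x,L_u)\ge\delta\}$ has positive wedge-distance to $L_u$ and that every $\dist_{\BP(k^{n})}$-ball about $p_u$ contains a wedge-ball (adjusting $\varepsilon,\delta,c$ accordingly).
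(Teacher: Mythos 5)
Your proof is correct, and it is precisely the direct computation the paper has in mind: the lemma is stated there without proof, introduced only by the remark that it ``follows directly from the definition of $\mathcal{U}$.'' Writing $u-\mathrm{I}_n=w\otimes\phi$ with $\phi(w)=0$, exploiting $u^{j}=\mathrm{I}_n+j(u-\mathrm{I}_n)$ and the cancellation $w\wedge w=0$, and converting $x\notin(L_u)_\delta$ into the bound $|\phi(x)|\ge\delta\|\phi\|\|x\|$ is exactly the intended verification, and your final remark about transferring from the wedge metric to the unspecified compatible metric on the compact space $\BP(k^{n})$ correctly handles the one point the paper's phrasing leaves loose.
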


\subsection{Schottky systems}

\begin{definition} Assume that $\mathcal{S}$ is a non-empty subset of $\mathcal{U}$ and $\mathcal{A} \subseteq \mathcal{R}$ are closed subsets of $\BP(k^{n})$. We say that $\mathcal{S}$ is a Schottky set with respect to the attracting set  $\mathcal{A}$ and the repelling set $\mathcal{R}$ and call the triple 
$(\mathcal{S},\mathcal{A},\mathcal{R})$ a Schottky system if for every $u \in \mathcal{S}$ there exist two positive
numbers  $\delta_u \ge \epsilon_u$ such that the following properties hold:
\begin{enumerate}
\item $u^{k}(x) \in (p_u)_{\varepsilon_u}$ for every $x \in \BP(k^{n})\setminus (L_u)_{\delta_u}$ and every  $k \ne 0$;
\item If $u \ne v \in \mathcal{S}$ then $(p_u)_{\varepsilon_u} \bigcap (L_v)_{\delta_v}=\emptyset$;
\item $\bigcup_{u \in \mathcal{S}}(p_u)_{\varepsilon_u} \subseteq \mathcal{A}$;
\item $\bigcup_{u \in \mathcal{S}}(L_u)_{\delta_u} \subseteq \mathcal{R}$.
\end{enumerate}
\end{definition} 
\begin{definition} 
The Schottky system $(\mathcal{S},\mathcal{A},\mathcal{R})$ is said to be profinitely-dense if $\mathcal{S}$ generates a profinitely-dense subgroup of $\SL_n(\Z)$.
We say that the Schottky system $(\mathcal{S}_+,\mathcal{A}_+,\mathcal{R}_+)$ contains the Schottky system $(\mathcal{S},\mathcal{A},\mathcal{R})$ if 
$\mathcal{S}_+ \supseteq \mathcal{S}$, $\mathcal{A}_+ \supseteq \mathcal{A}$ and $\mathcal{R}_+ \supseteq \mathcal{R}$. 
\end{definition}

\begin{lemma}\label{adding} Let $(\mathcal{S},\mathcal{A},\mathcal{R})$ be a Schottky system. 
Assume that $[p]_\epsilon \bigcap \mathcal{A}=\emptyset$ and $[L]_\delta \bigcap \mathcal{R}=\emptyset$ where $\delta \ge\epsilon >0$ and 
$p$ is a rational point which is continued in a rational 
subspace $L \in \mathbb{L}_{n-2}$.  Denote $\mathcal{A}_+=\mathcal{A} \bigcup [p]_\epsilon$ and $\mathcal{R}_+=\mathcal{R} \bigcup [L]_\delta$. 
Then there exist $v \in \mathcal{U}$ with $p=p_v$, $L=L_v$ such that  $(\mathcal{S}_+,\mathcal{A}_+,\mathcal{R}_+)$ is  Schottky system which contains $(\mathcal{S},\mathcal{A},\mathcal{R})$ where $\mathcal{S}_+:=\mathcal{S}\bigcup\{v\}$.
\end{lemma}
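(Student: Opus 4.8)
The plan is to produce $v$ as a sufficiently high power of a fixed rank-$1$ unipotent having exactly the prescribed attracting point and fixed hyperplane, and then to verify the four Schottky axioms by a direct parameter chase. First I would invoke Lemma \ref{lemma - structure of unipotent}: since $p$ is a rational point lying on the rational $(n-2)$-dimensional subspace $L$, the pair $(p,L)$ is in the image of the stated bijection, so there is some $u_0 \in \mathcal{U}$ with $p_{u_0}=p$ and $L_{u_0}=L$. Every nonzero power $u_0^m$ is again in $\mathcal{U}$ with the same attracting point $p$ and the same fixed hyperplane $L$, so the candidate will be $v:=u_0^m$ for an exponent $m$ to be fixed at the very end.

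The crucial observation — and the one genuinely delicate point — is that $p \notin \mathcal{R}$. This is where the hypothesis $[L]_\delta \cap \mathcal{R}=\emptyset$ enters: since $p \in L \subseteq [L]_\delta$, the point $p$ lies in a set disjoint from $\mathcal{R}$, hence $p \notin \mathcal{R}$. Because $\BP(k^{n})$ is compact and $\mathcal{R}$ is closed, $\dist(p,\mathcal{R})>0$. I would then fix the two Schottky radii for $v$ to be any positive numbers $\epsilon_v = \delta_v$ with
\[
\epsilon_v \le \epsilon \qquad\text{and}\qquad \epsilon_v < \dist(p,\mathcal{R});
\]
this is possible, and the assumption $\epsilon \le \delta$ guarantees in addition $\delta_v \le \delta$. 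These choices do not depend on $m$.

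With $\epsilon_v,\delta_v$ fixed I would check the axioms. For the containments, $\epsilon_v \le \epsilon$ gives $(p_v)_{\epsilon_v}=(p)_{\epsilon_v}\subseteq [p]_\epsilon \subseteq \mathcal{A}_+$ (axiom 3), and $\delta_v \le \delta$ gives $(L_v)_{\delta_v}=(L)_{\delta_v}\subseteq [L]_\delta \subseteq \mathcal{R}_+$ (axiom 4); one also records $\mathcal{A}_+\subseteq\mathcal{R}_+$, which reduces to $[p]_\epsilon\subseteq [L]_\delta$, true because $p \in L$ and $\epsilon \le \delta$. For the separation axiom (2) between $v$ and an old $u \in \mathcal{S}$ there are two intersections to show empty. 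The set $(p_u)_{\epsilon_u}\cap (L_v)_{\delta_v}$ is empty because $(p_u)_{\epsilon_u}\subseteq \mathcal{A}\subseteq\mathcal{R}$ while $(L_v)_{\delta_v}\subseteq [L]_\delta$ is disjoint from $\mathcal{R}$. The set $(p_v)_{\epsilon_v}\cap (L_u)_{\delta_u}$ is empty because $(L_u)_{\delta_u}\subseteq\mathcal{R}$ while $(p_v)_{\epsilon_v}=(p)_{\epsilon_v}$ is disjoint from $\mathcal{R}$ by the choice $\epsilon_v<\dist(p,\mathcal{R})$ — precisely the step that needed $p\notin\mathcal{R}$.

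Finally, for the contraction axiom (1), I would apply Lemma \ref{lemma - dynamic of unipotent} to $u_0$ with the now-fixed parameters $\epsilon_v$ and $\delta_v$, obtaining a constant $c$, and set $v:=u_0^m$ for any $m \ge c$. Then $v^{k}(x)\in (p_{u_0})_{\epsilon_v}=(p_v)_{\epsilon_v}$ for every $x \notin (L_{u_0})_{\delta_v}=(L_v)_{\delta_v}$ and every $k \ne 0$, which is exactly axiom (1); as noted there, $p_v=p_{u_0}=p$ and $L_v=L_{u_0}=L$. Since the pairwise conditions internal to $\mathcal{S}$ are untouched, $(\mathcal{S}_+,\mathcal{A}_+,\mathcal{R}_+)$ is a Schottky system containing $(\mathcal{S},\mathcal{A},\mathcal{R})$, as required. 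I expect the main obstacle to be conceptual rather than computational: recognizing that it is the hypothesis controlling $L$ (namely $[L]_\delta\cap\mathcal{R}=\emptyset$), not the one controlling $p$, that forces $p$ out of the repelling set $\mathcal{R}$, and that this is the only place where the asymmetry $\mathcal{A}\subseteq\mathcal{R}$ built into the definition could otherwise break separation.
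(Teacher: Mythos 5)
Your proof is correct and follows exactly the paper's (very terse) argument: obtain $u_0\in\mathcal{U}$ with $(p_{u_0},L_{u_0})=(p,L)$ from Lemma \ref{lemma - structure of unipotent}, then pass to a high power via Lemma \ref{lemma - dynamic of unipotent}; you merely spell out the axiom checks the paper leaves implicit. One small remark: the detour through $\dist(p,\mathcal{R})>0$ is unnecessary, since $(p)_{\epsilon_v}\subseteq[p]_\epsilon\subseteq[L]_\delta$ is already disjoint from $\mathcal{R}$ by hypothesis.
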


\begin{proof} Lemma \ref{lemma - structure of unipotent} implies that there exists  $u \in \mathcal{U}$ such that $p_u=p$ and $L_u=L$.
Lemma \ref{lemma - dynamic of unipotent}  implies that there exists $m \ge 1$ such that $v:=u^m$ satisfies the required properties.  
\end{proof}

The following lemma is a version of the well known ping-pong lemma:

\begin{lemma}[Ping-pong]\label{lemma - ping pong} Let $(\mathcal{S},\mathcal{A},\mathcal{R})$ be a 
Schottky system. Then the natural homomorphism $*_{u \in \mathcal{S}}\langle u\rangle \rightarrow \langle \mathcal{S}\rangle $ is an isomorphism. 
\end{lemma}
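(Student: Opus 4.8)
The plan is to prove that the obvious surjection $*_{u \in \mathcal{S}} \langle u\rangle \twoheadrightarrow \langle \mathcal{S}\rangle$ is injective by a ping-pong (table-tennis) argument, taking the attracting balls $A_u := (p_u)_{\epsilon_u}$ as the playground. Since each $u \in \mathcal{S}$ is a nontrivial unipotent, $\langle u\rangle \cong \Z$, and a general nontrivial element of the free product is represented by a reduced word $w = u_1^{k_1} u_2^{k_2}\cdots u_\ell^{k_\ell}$ with $k_t \neq 0$ and $u_t \neq u_{t+1}$. It suffices to show that every such $w$ with $\ell \ge 1$ acts nontrivially on $\BP(k^{n})$, hence is $\neq e$ in $\langle \mathcal{S}\rangle$.

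First I would record two purely geometric consequences of the Schottky axioms. (i) \emph{The sets $A_u$ are nonempty and pairwise disjoint.} Each contains $p_u$; and because a rank-$1$ unipotent satisfies $(u-\mathrm{I}_n)^2 = 0$, its attracting point lies on its fixed hyperplane, $p_u \in L_u$, so that $A_u = (p_u)_{\epsilon_u} \subseteq (L_u)_{\delta_u}$ using $\epsilon_u \le \delta_u$. Combining this with axiom (2), which gives $A_v \cap (L_u)_{\delta_u} = \emptyset$ for $u \neq v$, yields $A_u \cap A_v \subseteq (L_u)_{\delta_u}\cap A_v = \emptyset$. (ii) \emph{The ping-pong inclusion:} for $u \neq v$ and every $k \neq 0$ one has $u^{k}(A_v) \subseteq A_u$. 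Indeed axiom (2) places $A_v$ in the complement of $(L_u)_{\delta_u}$, and axiom (1) then maps this complement into $(p_u)_{\epsilon_u} = A_u$ under every nonzero power of $u$.

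With these inputs the argument is the standard one. Given a reduced word $w = u_1^{k_1}\cdots u_\ell^{k_\ell}$, I would try to choose an index $m$ with $m \notin \{u_1,u_\ell\}$ together with a point $x \in A_m$, and push $x$ through $w$ from right to left: since $m \neq u_\ell$, (ii) gives $u_\ell^{k_\ell}(x) \in A_{u_\ell}$, and as consecutive letters differ, repeated application of (ii) yields $w(x) \in A_{u_1}$; since $m \neq u_1$, the disjointness in (i) forces $w(x) \neq x$, so $w \neq e$. Such an $m$ is available whenever $|\mathcal{S}| \ge 3$, and also whenever $u_1 = u_\ell$ and $|\mathcal{S}|\ge 2$ (in particular for every word of length $\ell = 1$).

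The one point requiring care, and the main (if mild) obstacle, is the configuration $|\mathcal{S}| = 2$ with $u_1 \neq u_\ell$, where no third index $m$ exists; the degenerate case $|\mathcal{S}| = 1$ is trivial, the map being the identity of $\Z$. To dispose of this configuration I would use that each factor $\langle u\rangle$ is infinite: conjugating $w$ by a power $u_1^{\,j}$ with $j \neq 0$ and $j \neq -k_1$ produces a reduced word $u_1^{\,j}\, w\, u_1^{-j} = u_1^{\,j+k_1} u_2^{k_2}\cdots u_\ell^{k_\ell} u_1^{-j}$ whose first and last letters both lie in $\langle u_1\rangle$ (the choice $j \neq -k_1$ keeps the leading letter nontrivial, and $u_\ell \neq u_1$ prevents the trailing letters from merging). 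This conjugate falls under the $u_1 = u_\ell$ case of the previous paragraph, with $m$ taken to be the other generator, so it is nontrivial, and hence so is $w$. This reduces every case to the generic one and completes the proof.
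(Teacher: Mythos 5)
Your proof is correct: the paper states this lemma without proof, invoking it as ``a version of the well known ping-pong lemma,'' and your argument is precisely the standard one it implicitly relies on. The two geometric inputs you extract from the Schottky axioms (disjointness of the attracting balls via $p_u\in L_u$, which follows from $(u-\mathrm{I}_n)^2=0$ for rank-one unipotents, and the inclusion $u^k(A_v)\subseteq A_u$) are exactly what is needed, and your conjugation trick correctly disposes of the only delicate configuration, $|\mathcal{S}|=2$ with distinct first and last letters.
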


An important ingredient for our methods is the following beautiful result:

\begin{theorem} \cite[Venkataramana]{Venkataramana:LC_completions} \label{Thm - Venkataramana} Let $\Gamma$ be a Zariski-dense subgroup of $\SL_n(\Z)$. Assume that
$u\in \mathcal{U} \bigcap \Gamma$, $v \in \Gamma$ is unipotent  and $\langle u,v\rangle \simeq \Z^2$. Then $\Gamma$ has finite index in $\SL_n(\Z)$. In particular, if  
$\Gamma$ is profinitely-dense then $\Gamma=\SL_n(\Z)$. 
\end{theorem}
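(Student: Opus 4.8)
The statement is an external arithmetic input, so my plan is to lay out the architecture of a proof rather than to reduce it to the dynamical lemmas of the previous sections: the finite-index conclusion is genuinely arithmetic and cannot come from projective dynamics alone, since thin subgroups of $\SL_n(\Z)$ are Zariski dense and contain plenty of unipotents yet have infinite index. First I would normalize the configuration. By Lemma \ref{lemma - structure of unipotent} a rank-$1$ unipotent is, up to $\Q$-conjugacy, an integer multiple of a transvection, so after conjugating $\Gamma$ inside $\SL_n(\Q)$ I may assume $u$ lies in the highest-root subgroup $U_\alpha$; then $\langle u\rangle$ is a finite-index subgroup of the integer points $U_\alpha(\Z)$, and $U_\alpha$ sits inside the \emph{abelian} unipotent radical $U\cong\mathbf{G}_a^{\,n-1}$ of the maximal parabolic $P$ stabilizing the attracting line of $u$. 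The commuting unipotent $v$, together with the relation $\langle u,v\rangle\cong\Z^2$, then provides (after using Zariski density to push it into the radical) a second independent integral direction, so that $\Gamma$ contains a rank-$2$ lattice of commuting unipotents inside $P$.

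Next I would fill out the radical. The Zariski closure of $\Gamma$ is all of $\SL_n$, so the image of $\Gamma$ in the Levi factor $L\cong\GL_{n-1}$ of $P$ is Zariski dense; conjugating the root elements already present and clearing denominators, I aim to show that $\Gamma$ contains a finite-index subgroup of the full integral radical $U(\Z)\cong\Z^{n-1}$, that is, $E_{1j}(m\Z)$ for all $j\ne 1$ and a fixed $m\in\N$. Controlling this \emph{integral} spreading — guaranteeing that the conjugates can be taken inside a single congruence level rather than merely over larger and larger fields — is the first genuinely technical point.

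The higher-rank mechanism then takes over. Using Zariski density once more, I would produce (a finite-index subgroup of) an opposite radical $U^{-}(\Z)$, i.e. elements $E_{i1}(m'\Z)$, by conjugating $u$ to put its attracting line and fixed hyperplane in general position. The Steinberg commutator identity $[E_{i1}(s),E_{1j}(t)]=E_{ij}(st)$ — available precisely because for $n\ge 3$ there exist indices $i\ne j$ both distinct from $1$ — recovers all elementary unipotents $E_{ij}(m''\Z)$, so $\Gamma$ contains the relative elementary subgroup $E_n(m''\Z)$, which for $n\ge 3$ is of finite index in $\SL_n(\Z)$ by the theory of relative elementary subgroups together with strong approximation and the congruence subgroup property of Bass--Milnor--Serre. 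This last passage is the main obstacle, and it is exactly where $n\ge 3$ is indispensable: in $\SL_2$ the analogous step fails outright, since two opposite unipotents can generate an infinite-index free group (Sanov), so there is no commutator index $i,j\ne 1$ to exploit and no congruence subgroup property to invoke. The rank-$1$-plus-commuting-$\Z^2$ hypothesis is the precise rigidity that forces the configuration into a genuine (congruence) unipotent radical rather than a thin one.

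Finally, the ``in particular'' clause is immediate from what precedes. If $\Gamma\le\SL_n(\Z)$ has finite index and $N$ is its normal core, then $N$ is a finite-index normal subgroup and $\Gamma/N$ is the image of $\Gamma$ in the finite quotient $\SL_n(\Z)/N$; profinite density forces this image to be all of $\SL_n(\Z)/N$, whence $\Gamma=\SL_n(\Z)$.
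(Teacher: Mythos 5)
First, a calibration: the paper does not prove Theorem \ref{Thm - Venkataramana} at all. It is imported verbatim from \cite{Venkataramana:LC_completions} and used as a black box --- only the statement and its ``in particular'' clause are ever invoked (via Lemma \ref{lemma - finding a good pair}, Lemma \ref{throwing}, and the counting theorem). So there is no in-paper argument to compare yours against, and your instinct to treat this as an external arithmetic input rather than something derivable from the projective dynamics is correct; your thin-group remark explaining \emph{why} it cannot follow from dynamics alone is also well taken.

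That said, what you have written is an outline, not a proof, and the two places you flag as ``technical points'' are not loose ends to be tidied --- they are the entire content of the theorem: converting Zariski density of the Levi image into \emph{integral} generation of a full unipotent radical $U(\Z)$ (and then of an opposite one) is exactly what Venkataramana's argument accomplishes, and ``conjugating and clearing denominators'' does not yet do it. There is also one concrete unaddressed gap in your normalization: the centralizer of a transvection $E_{12}(t)$ in $\SL_n$ is strictly larger than the unipotent radical of the parabolic stabilizing its attracting line --- it contains, for instance, $E_{32}(s)$, which lies in the Levi factor --- so the commuting unipotent $v$ with $\langle u,v\rangle\cong\Z^2$ need not lie in, nor be conjugated into, the abelian radical you set up (you cannot move $v$ independently of $u$ without destroying the commutation). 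The case where $v$ is transverse to the radical needs separate treatment, though it is in some sense the friendlier one, since it hands you non-commuting root configurations directly. The final reduction (relative elementary subgroups $E_n(m\Z)$ have finite index for $n\ge 3$, where the congruence-subgroup machinery and the restriction $n\ge 3$ genuinely enter) and your deduction of the ``in particular'' clause from the normal core are both correct as stated. In short: the skeleton matches the standard strategy for results of this type, but as a proof it defers precisely the steps that make the theorem a theorem, which is presumably why the authors chose to cite it rather than prove it.
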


Note that if $u,v \in \SL_n(\Z)\bigcap \mathcal{U}$ and $p_u=p_v$ then $(u-1)(v-1)=(v-1)(u-1)=0$ and in particular
$uv=vu$. 
Thus we get the following lemma:
\begin{lemma}\label{lemma - finding a good pair} Let $g \in \SL_n(\Z)$ and $u_1,u_2 \in \mathcal{U}$.
Assume that $p_{u_2}=gp_{u_1}$ and $L_{u_2} \ne gL_{u_1}$. Then $\langle u_1,g^{-1}u_2g \rangle \simeq \Z^2$.
\end{lemma}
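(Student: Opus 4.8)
The plan is to reduce everything to the single element $w := g^{-1}u_2 g$ and to analyze the pair $(u_1,w)$ directly. First I would record how the point of attraction and the fixed hyperplane transform under conjugation. Since $w - \mathrm{I}_n = g^{-1}(u_2-\mathrm{I}_n)g$, the element $w$ is again a rank-$1$ unipotent in $\mathcal{U}$, its point of attraction is $p_w = g^{-1}p_{u_2}$ and its fixed hyperplane is $L_w = g^{-1}L_{u_2}$, where $g^{-1}$ acts as a projective transformation on $\BP(k^n)$ and on $\mathbb{L}_{n-2}$. Feeding in the two hypotheses, $p_{u_2}=g p_{u_1}$ gives $p_w = p_{u_1}$, while $L_{u_2}\neq g L_{u_1}$ gives $L_w \neq L_{u_1}$.

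Next, since $u_1,w\in\SL_n(\Z)\bigcap\mathcal{U}$ share the same point of attraction $p_{u_1}=p_w$, the observation recorded just before the lemma applies and yields $(u_1-1)(w-1)=(w-1)(u_1-1)=0$; in particular $u_1$ and $w$ commute. Hence $\langle u_1,w\rangle$ is abelian and is a quotient of $\Z^2$ via the map $(a,b)\mapsto u_1^a w^b$, so it only remains to rule out a nontrivial relation $u_1^a w^b = \mathrm{I}_n$ with $(a,b)\neq(0,0)$.

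For this independence step, write $u_1 = \mathrm{I}_n + N_1$ and $w = \mathrm{I}_n + N_w$ with $N_1,N_w$ nilpotent of rank one, so that $u_1^a = \mathrm{I}_n + aN_1$ and $w^b = \mathrm{I}_n + bN_w$; in particular $p_{u_1^a}=p_{u_1}$ and $L_{u_1^a}=L_{u_1}$ for every $a\neq 0$, and likewise for $w$. If $a=0$ or $b=0$ the relation forces a nonzero power of a nontrivial unipotent to be trivial, which is impossible in characteristic zero since $aN_1\neq 0$. If $a,b\neq 0$, then $u_1^a = w^{-b}$ reads $aN_1 = -bN_w$, so $N_1$ and $N_w$ are proportional and share the same kernel; this gives $L_{u_1}=L_w$, contradicting $L_w\neq L_{u_1}$. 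Thus the kernel of $(a,b)\mapsto u_1^a w^b$ is trivial and $\langle u_1,w\rangle\cong\Z^2$.

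The argument is short, and I do not expect a serious obstacle: the only point requiring a little care is the bookkeeping of how $p_u$ and $L_u$ transform under conjugation by $g$ and under taking powers, which is precisely what converts the two hypotheses into the two statements ``same attracting point'' and ``different fixed hyperplane'' that drive the proof.
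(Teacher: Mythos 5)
Your proof is correct and follows the same route the paper intends: the paper only records the observation that two rank-one unipotents with the same point of attraction satisfy $(u-1)(v-1)=(v-1)(u-1)=0$ and states the lemma without further argument, and your write-up supplies exactly the missing bookkeeping (how $p_u$ and $L_u$ transform under conjugation) plus the independence step via $u_1^a w^b = \mathrm{I}_n + aN_1 + bN_w$ and the hypothesis $L_w \neq L_{u_1}$. No gaps.
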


\begin{lemma}\label{throwing} Assume that $g$ is an element of $\SL_n(\Z)$, $(\mathcal{S},\mathcal{A},\mathcal{R})$ is a profinitely-dense Schottky system, $\delta \ge \epsilon>0$, 
$p_1$ and $p_2$ are rational  points and $L_1$ and $L_2$ are rational  $(n-2)$-dimensional subspaces such that the following conditions hold:
\begin{enumerate}
\item $([p_1]_\epsilon  \bigcup [p_2]_\epsilon)\bigcap \mathcal{R}=\emptyset$ and $([L_1]_\delta \bigcup [L_2]_\delta)\bigcap \mathcal{A}=\emptyset$;
\item $[p_1]_\epsilon \bigcap [L_2]_\delta=\emptyset$ and $[p_2]_\epsilon \bigcap [L_1]_\delta=\emptyset$;
\item $p_1=gp_2$ and $L_1 \ne gL_2$.
\end{enumerate}
Denote $\mathcal{A}_+=\mathcal{A} \bigcup [p_1]_\epsilon \bigcup [p_2]_\epsilon$ and $\mathcal{R}_+=\mathcal{R} \bigcup [L_1]_\delta \bigcup [L_2]_\delta$.
Then there exists a set $\mathcal{S}_+ \supseteq \mathcal{S}$ such that $(\mathcal{S}_+,\mathcal{A}_+,\mathcal{R}_+)$ is  a Schottky system  which contains $(\mathcal{S},\mathcal{A},\mathcal{R})$
and $\langle \mathcal{S}_+,g \rangle=\SL_n(\Z)$. 
\end{lemma}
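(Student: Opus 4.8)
The plan is to enlarge the given Schottky set by adjoining exactly two rank-one unipotent elements, one realizing the pair $(p_1,L_1)$ and one realizing $(p_2,L_2)$, and then to force the group generated together with $g$ to be all of $\SL_n(\Z)$ by exhibiting a pair of commuting unipotents and invoking Venkataramana's theorem (Theorem \ref{Thm - Venkataramana}).

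First I would produce the two elements by applying Lemma \ref{adding} twice. Since each $p_i$ lies in $L_i$ and $\delta \ge \epsilon$, one has $[p_i]_\epsilon \subseteq [L_i]_\delta$, so adjoining these neighborhoods preserves the inclusion $\mathcal{A}_+ \subseteq \mathcal{R}_+$ built into the notion of a Schottky system. The first application, to $(\mathcal{S},\mathcal{A},\mathcal{R})$ with the pair $(p_1,L_1)$, is legitimate: hypothesis (1) supplies the disjointness of $[p_1]_\epsilon$ from $\mathcal{R}$ and of $[L_1]_\delta$ from $\mathcal{A}$ that Lemma \ref{adding} requires. This yields $v_1 \in \mathcal{U}$ with $p_{v_1}=p_1$, $L_{v_1}=L_1$ and an intermediate Schottky system with sets $\mathcal{A}_1=\mathcal{A}\cup[p_1]_\epsilon$, $\mathcal{R}_1=\mathcal{R}\cup[L_1]_\delta$. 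I would then adjoin $(p_2,L_2)$ to this intermediate system. The required disjointnesses are now against the enlarged sets: $[p_2]_\epsilon \cap \mathcal{R}_1=\emptyset$ and $[L_2]_\delta \cap \mathcal{A}_1=\emptyset$. These follow by combining hypothesis (1) (giving disjointness from $\mathcal{R}$ and $\mathcal{A}$) with hypothesis (2) (giving $[p_2]_\epsilon\cap[L_1]_\delta=\emptyset$ and $[p_1]_\epsilon\cap[L_2]_\delta=\emptyset$, which dispose of the newly added pieces). This produces $v_2 \in \mathcal{U}$ with $p_{v_2}=p_2$, $L_{v_2}=L_2$, and setting $\mathcal{S}_+:=\mathcal{S}\cup\{v_1,v_2\}$ gives a Schottky system $(\mathcal{S}_+,\mathcal{A}_+,\mathcal{R}_+)$ containing $(\mathcal{S},\mathcal{A},\mathcal{R})$.

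It remains to show $\langle\mathcal{S}_+,g\rangle=\SL_n(\Z)$. Since $\mathcal{S}_+\supseteq\mathcal{S}$ and $(\mathcal{S},\mathcal{A},\mathcal{R})$ is profinitely dense, the group $\langle\mathcal{S}_+,g\rangle\supseteq\langle\mathcal{S}\rangle$ is profinitely dense, and a profinitely dense subgroup of $\SL_n(\Z)$ is a fortiori Zariski dense. To feed Venkataramana's theorem I then need a copy of $\Z^2$ of unipotents inside $\langle\mathcal{S}_+,g\rangle$; this is exactly what hypothesis (3) delivers through Lemma \ref{lemma - finding a good pair}. Applying that lemma with the roles $u_1:=v_2$ and $u_2:=v_1$, the equalities $p_{v_1}=p_1=g\,p_2=g\,p_{v_2}$ and $L_{v_1}=L_1\neq g L_2=g\,L_{v_2}$ hold precisely by (3), so $\langle v_2,\,g^{-1}v_1 g\rangle\cong\Z^2$. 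Both $v_2\in\mathcal{U}$ and the unipotent $g^{-1}v_1 g$ lie in $\langle\mathcal{S}_+,g\rangle$, so Theorem \ref{Thm - Venkataramana} applied with $u=v_2$, $v=g^{-1}v_1 g$ shows $\langle\mathcal{S}_+,g\rangle$ has finite index, whence $\langle\mathcal{S}_+,g\rangle=\SL_n(\Z)$ by profinite density.

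I expect the only real difficulty to be bookkeeping rather than conceptual. One must verify carefully that inserting $v_1$ does not destroy the disjointness needed to insert $v_2$ — this is exactly the purpose of the cross conditions in (2) — and one must match the indices in Lemma \ref{lemma - finding a good pair} in the order forced by $p_1=g\,p_2$, so that conjugation by $g^{-1}$ turns the $v_1$-unipotent into one commuting with $v_2$. All the genuinely hard dynamical and arithmetic content is already packaged into Lemma \ref{adding}, Lemma \ref{lemma - finding a good pair}, and Theorem \ref{Thm - Venkataramana}.
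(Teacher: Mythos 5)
Your proof is correct and follows essentially the same route as the paper: realize $(p_1,L_1)$ and $(p_2,L_2)$ by rank-one unipotents adjoined to $\mathcal{S}$, check the Schottky conditions via hypotheses (1) and (2), and then combine Lemma \ref{lemma - finding a good pair} with Theorem \ref{Thm - Venkataramana} and profinite density to conclude $\langle\mathcal{S}_+,g\rangle=\SL_n(\Z)$. The only cosmetic difference is that the paper picks both unipotents at once and invokes Lemma \ref{lemma - dynamic of unipotent} directly to choose a single common power $m$, whereas you apply Lemma \ref{adding} twice in succession; your careful matching of indices in Lemma \ref{lemma - finding a good pair} (getting $\langle v_2, g^{-1}v_1 g\rangle\cong\Z^2$ from $p_1=gp_2$) is in fact slightly more precise than the paper's own wording.
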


\begin{proof} For every $1 \le i \le 2$ choose $u_i\in \mathcal{U}$ such that
$p_{u_i}=p_i$ and $L_{u_i}=L_i$.
  Lemma \ref{lemma - finding a good pair} implies that $\langle u_1,g^{-1}u_2g \rangle \simeq \Z^2$.
Lemma \ref{lemma - dynamic of unipotent}  implies that there exists $m \ge 1$ such that $(\mathcal{S}_+,\mathcal{A}_+,\mathcal{R}_+)$ is  Schottky system
where $v_1:=u_1^m$, $v_2:=u_2^m$ and $\mathcal{S}_+:=\mathcal{S} \bigcup \{v_1,v_2\}$. Theorem \ref{Thm - Venkataramana} implies  
that $\langle \mathcal{S}_+,g \rangle=\SL_n(\Z)$. 
\end{proof}

\begin{definition} Let $1 \le k \le n$. A $k$-tuple $(p_1,\ldots,p_k) $ of projective points is called generic if  $p_1,\ldots,p_k$ span a $(k-1)$-dimensional
subspace of $\BP(k^{n})$. Note that the set of generic $k$-tuples of $\BP(k^{n})$ is an open subset of the product of $k$ copies of the projective space, indeed it is even Zariski open.
\end{definition}

\begin{theorem}[Conze-Guivarc'h, \cite{CG00}]\label{them - conze} Assume that $n \ge 3$ and that $\Gamma \le \SL_n(\R)$ is a lattice. Then $\Gamma$ acts minimally
of the set of generic $(n-1)$-tuples.  
\end{theorem}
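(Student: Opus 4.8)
The plan is to recast the statement as a homogeneous dynamics problem and to locate the genuine difficulty precisely. Write $G = \SL_n(\R)$ and let $X$ denote the set of generic $(n-1)$-tuples. First I would observe that $G$ acts transitively on $X$: an ordered $(n-1)$-tuple of lines in general position is the same datum as an ordered basis of lines of a linear hyperplane $\hat W \subset \R^n$, and $G$ carries any such configuration to any other. Thus $X \cong G/H$, where a direct computation identifies the stabilizer $H$ of the standard tuple $([e_1], \dots, [e_{n-1}])$ as the solvable group of matrices preserving each line $\langle e_i\rangle$ for $i \le n-1$, i.e. the matrices whose first $n-1$ columns are $\mu_i e_i$ and whose last column is arbitrary. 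In particular $H = T \ltimes U$, where $T$ is the full diagonal torus and $U \cong \R^{n-1}$ is the abelian unipotent subgroup generated by the root groups $E_{i,n}$, $i \le n-1$ (equivalently, the rank-one unipotents fixing the hyperplane $\hat W$). Since $\Gamma$ is a lattice it is Zariski dense by the Borel density theorem, which we will use repeatedly.

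Next I would invoke the duality between the two homogeneous spaces: for any closed subgroup $H \le G$ and any subgroup $\Gamma \le G$, the $\Gamma$-orbit of $gH$ in $G/H$ is dense if and only if the $H$-orbit of $\Gamma g$ in $\Gamma\backslash G$ is dense, since both amount to density of the same double-coset set $\Gamma g H$ in $G$. Hence $\Gamma$ acts minimally on $X = G/H$ if and only if $H$ acts minimally on the finite-volume space $\Gamma \backslash G$. This reformulation already clarifies both the hypothesis $n \ge 3$ and the exact count $n-1$: replacing $H$ by the diagonal torus $T$ alone (which is the stabilizer of a generic $n$-tuple) yields the Weyl-chamber flow, which is never minimal, so the theorem is genuinely false for $n$-tuples and the extra unipotent directions $U \subset H$ are exactly what must rescue minimality.

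The heart of the argument, and the step I expect to be the main obstacle, is therefore to prove that the solvable group $H = T\ltimes U$ acts minimally on $\Gamma\backslash G$. Here I would exploit the unipotent directions: by Ratner's orbit-closure theorem (or, in the style closer to Conze--Guivarc'h, by direct expansion and recurrence estimates for the $T$-flow acting on the $U$-horospheres), every $U$-orbit closure in $\Gamma\backslash G$ is a homogeneous set $Lx$ for some closed intermediate subgroup $U \le L \le G$. One then promotes this to $H$-orbit closures, uses $T$-invariance together with recurrence of the diagonal flow to propagate density along the expanding horospherical directions, and finally rules out every proper intermediate $L$: since $\Gamma$ is Zariski dense and $G$ is almost simple of real rank $n-1 \ge 2$, the possible $L$ are severely constrained, forcing $L = G$ and hence density of every $H$-orbit. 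Once minimality of $H \curvearrowright \Gamma\backslash G$ is established, the duality of the second paragraph returns minimality of $\Gamma \curvearrowright X$, completing the proof. A self-contained alternative that avoids Ratner's theorem is the random-walk approach of Furstenberg--Guivarc'h: one fixes a generating probability measure $\mu$ on $\Gamma$, analyzes the $\mu$-stationary measures on the compact space $(\mathbb{P}^{n-1})^{n-1}$, and uses proximality of random matrix products together with Zariski density to show the unique stationary measure is supported on all of $X$, whose support coincides with the unique minimal $\Gamma$-invariant closed subset.
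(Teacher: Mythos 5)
The survey gives no proof of this statement --- it is quoted from \cite{CG00} as an external ingredient --- so there is no internal argument to compare yours against; I can only assess your sketch on its own terms. Your framework is the right one: the identification $X\cong G/H$ with $H=T\ltimes U$, where $U$ is the unipotent radical of the maximal parabolic stabilizing the hyperplane spanned by the tuple, the duality ``$\Gamma$ minimal on $G/H$ iff $H$ minimal on $\Gamma\backslash G$'', and the remark that the statement fails for $n$-tuples because the Weyl chamber flow is not minimal, are all correct.

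The gap sits exactly in the step you flag as the main obstacle, and both mechanisms you offer for closing it fail as stated. First, proper $U$-orbit closures genuinely occur --- for $\Gamma=\SL_n(\Z)$ and $x=\Gamma e$ the orbit $xU$ is already compact, since $U\cap\Gamma$ is a lattice in $U\cong\R^{n-1}$ --- so Ratner/Dani alone settle nothing, and the promotion to $H$-orbits must carry the whole weight. But the recurrence you invoke is not available: if $t_k=\exp(\mathrm{diag}(s_1,\dots,s_n))\in T$ is any sequence whose adjoint action expands all of $U$, then $s_i-s_n\to+\infty$ for each $i<n$, and summing against $\sum_i s_i=0$ forces $s_n\to-\infty$; hence the lattice $\Z^n t_k$ contains the vector $e^{s_n}e_n\to 0$ and $xt_k$ diverges into the cusp. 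So at this very basepoint there is no recurrent sequence along which $U$ is expanded, and ``Zariski density constrains $L$'' is no substitute (Zariski density of a lattice is automatic and does not control which intermediate subgroups arise as orbit closures). Second, the stationary-measure alternative is also broken: stationary measures on the compact space $(\PP^{n-1})^{n-1}$ are \emph{not} unique --- the diagonal copy of $\PP^{n-1}$ is closed, $\Gamma$-invariant, and carries the push-forward of the Furstenberg measure --- and worse, by proximality the Ces\`aro limits of $\mu^{*k}*\delta_x$ for a generic tuple $x$ concentrate precisely on that diagonal, i.e.\ on \emph{non-generic} tuples. That argument therefore only shows the orbit closure of $x$ in the compactification meets the degenerate locus, not that it contains all of $X$. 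Either route needs a genuinely new input beyond what you have written; as it stands neither paragraph of the sketch proves minimality of $H\curvearrowright\Gamma\backslash G$.
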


\begin{corollary}\label{cor - conze} Assume that  $n \ge 3$ and $\Gamma \le \SL_n(\R)$  is a lattice. For  every $1 \le i \le 2$ let $p_i \in L_i \in \mathbb{L}_{n-2}$. Then for every positive numbers $\varepsilon$
and $\delta$ there exists $g \in \Gamma$ such that $gp_1 \in (p_2)_{\varepsilon}$ and
$gL_1 \in (L_2)_{\delta}$.
\end{corollary}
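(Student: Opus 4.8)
The plan is to reduce the statement to the minimality of the $\Gamma$-action on generic $(n-1)$-tuples (Theorem \ref{them - conze}) by encoding each incidence pair $(p_i,L_i)$ as such a tuple, and then to transport the resulting approximation back to $\BP(k^n)$ and to $\mathbb{L}_{n-2}$ using continuity of the span map.

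First, for each $i \in \{1,2\}$ I would choose projective points $q^{(i)}_2,\dots,q^{(i)}_{n-1} \in L_i$ such that, together with $p_i$, they span $L_i$; this is possible because $L_i \in \mathbb{L}_{n-2}$ is an $(n-2)$-dimensional projective subspace containing $p_i$ and $\R$ is infinite. Writing $q^{(i)}_1 := p_i$, the tuple $T_i := (q^{(i)}_1,\dots,q^{(i)}_{n-1})$ is then a generic $(n-1)$-tuple in the sense of the definition above: its points span the $(n-2)$-dimensional subspace $L_i$. Next I apply Theorem \ref{them - conze}: since $\Gamma$ acts minimally on the set of generic $(n-1)$-tuples, the orbit of $T_1$ is dense, so for every $\eta>0$ there is $g \in \Gamma$ with $\dist(g q^{(1)}_j, q^{(2)}_j) < \eta$ for all $1 \le j \le n-1$ simultaneously. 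The first coordinate already gives $\dist(g p_1, p_2) < \eta$, so taking $\eta \le \varepsilon$ yields $g p_1 \in (p_2)_\varepsilon$.

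Finally, I would invoke the continuity of the map sending a generic $(n-1)$-tuple to the element of $\mathbb{L}_{n-2}$ that it spans. The genericity condition is open (indeed Zariski open), so $T_2$ has a neighborhood of generic tuples on which this span map is well defined and continuous into $(\mathbb{L}_{n-2},\dist_{\mathbb{L}_{n-2}})$. Hence there is $\eta_0 > 0$ such that any generic tuple within $\eta_0$ of $T_2$ spans a subspace within $\dist_{\mathbb{L}_{n-2}}$-distance $\delta$ of $\mathrm{span}(T_2)=L_2$. Choosing $\eta \le \min(\varepsilon,\eta_0)$ in the previous step, the tuple $gT_1$ is generic and its span $gL_1 = \mathrm{span}(g q^{(1)}_1,\dots,g q^{(1)}_{n-1})$ satisfies $gL_1 \in (L_2)_\delta$, which together with $g p_1 \in (p_2)_\varepsilon$ completes the argument.

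The main obstacle is this last step: one has to make the passage from ``the $n-1$ spanning points move a little'' to ``the spanned hyperplane moves a little in the Grassmannian metric'' quantitative and uniform near $T_2$. This requires knowing that a small perturbation of a spanning tuple stays generic (retains full span) — guaranteed by openness of the genericity locus — and that the span then depends continuously on the points, so that the approximation in $\BP(k^n)^{\,n-1}$ furnished by minimality genuinely controls the approximation in $\mathbb{L}_{n-2}$.
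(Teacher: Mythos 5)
Your argument is correct and is exactly the intended deduction: the paper states Corollary \ref{cor - conze} without proof, and the natural route is precisely yours --- encode each incidence pair $(p_i,L_i)$ as a generic $(n-1)$-tuple, invoke the minimality from Theorem \ref{them - conze}, and use openness of the genericity locus together with continuity of the span map into $\mathbb{L}_{n-2}$ to control $gL_1$. Nothing is missing; the continuity/openness point you flag as the ``main obstacle'' is standard and you handle it adequately.
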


The proof of the following Proposition is based on the proof of the main result of \cite{AGS:gen}. 

\begin{proposition}\label{prop -pro-dense} Assume that $n \ge 3$ and $p \in L \in \mathbb{L}_{n-2}$.
Then for every $\delta \ge \epsilon >0$ there exists a finite subset $\mathcal{S}\subseteq \mathcal{U}$ such that  $(\mathcal{S},\mathcal{A},\mathcal{R})$ is a
profinitely-dense Schottky system  where $\mathcal{A}:=[p]_\epsilon$ and  $\mathcal{R}:=[L]_\delta$.
\end{proposition}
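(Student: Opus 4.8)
The plan is to realize $\mathcal{S}$ as a finite family of suitably high powers of rank-$1$ unipotents whose flags $(p_i,L_i)$ cluster near the prescribed flag $(p,L)$, and to read off profinite density from a strong approximation argument. The first move is to replace the requirement that $\langle\mathcal{S}\rangle$ be profinitely dense by two more tractable conditions. Since $\SL_n(\Z)$ has the congruence subgroup property for $n\ge 3$, its finite quotients are exactly the congruence quotients $\SL_n(\Z/m)$, so profinite density amounts to surjectivity onto $\SL_n(\Z/m)$ for every $m$. By strong approximation (Matthews--Vaserstein--Weisfeiler, Nori), any finitely generated Zariski-dense subgroup of $\SL_n(\Z)$ already surjects onto $\SL_n(\Z/m)$ for all $m$ coprime to a finite set of bad primes $\{p_1,\dots,p_r\}$; and for each bad prime, a subgroup surjecting onto $\SL_n(\Z/p_\ell^{k_\ell})$ for a single suitably large $k_\ell$ automatically surjects onto $\SL_n(\Z_{p_\ell})$, because the congruence kernel is pro-$p$ and generation propagates from the first congruence quotient. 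Thus it suffices to build $\mathcal{S}$ so that $\langle\mathcal{S}\rangle$ is Zariski dense and surjects onto the single finite group $\SL_n(\Z/M)$ with $M=\prod_\ell p_\ell^{k_\ell}$.

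Next I would fix the geometry. Using Lemma \ref{lemma - structure of unipotent} I choose finitely many rational flags $(p_i,L_i)$ with $p_i\in L_i$, all lying inside $(p)_{\epsilon}$ and $(L)_{\delta}$, subject to the genericity requirement $p_i\notin L_j$ for $i\ne j$; since $p\in L$ forces $[p]_\epsilon\subseteq[L]_\delta$, this is exactly what will let condition (2) of a Schottky system hold after passing to high powers. Two families of flags are selected. First, enough flags in general position that the associated transvections generate a Zariski-dense subgroup of $\SL_n$: the locus of tuples of flags generating a Zariski-dense group is Zariski open and nonempty once the number of flags is large, hence it meets every analytic neighbourhood of $(p,L)$ and contains rational points there, so these flags can be taken rational and arbitrarily close to $(p,L)$. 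Second, for each bad prime I add rational flags near $(p,L)$ whose reductions modulo $p_\ell^{k_\ell}$ are in general position; since rationals in a small real ball realise arbitrary residues modulo a fixed modulus, the corresponding reductions can be made to generate $\SL_n(\Z/p_\ell^{k_\ell})$, so together they surject onto $\SL_n(\Z/M)$.

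Finally, for each chosen flag let $u_i\in\mathcal{U}$ be the unipotent given by Lemma \ref{lemma - structure of unipotent} and set $v_i=u_i^{m_i}$. By Lemma \ref{lemma - dynamic of unipotent} I may take the exponents $m_i$ large enough that each $v_i$ has attracting and repelling radii $\epsilon_i\le\delta_i$ so small that $[p_i]_{\epsilon_i}\subseteq[p]_\epsilon$, $[L_i]_{\delta_i}\subseteq[L]_\delta$, and $(p_i)_{\epsilon_i}\cap(L_j)_{\delta_j}=\emptyset$ for $i\ne j$; this is precisely a repeated application of Lemma \ref{adding}, so that $(\mathcal{S},\mathcal{A},\mathcal{R})$ with $\mathcal{S}=\{v_i\}$, $\mathcal{A}=[p]_\epsilon$ and $\mathcal{R}=[L]_\delta$ is a Schottky system and $\langle\mathcal{S}\rangle$ is free by the ping-pong Lemma \ref{lemma - ping pong}. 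The point is that passing to powers must not spoil the arithmetic input: replacing $u_i$ by $v_i$ leaves the Zariski closure of the generated group unchanged, and choosing every $m_i\equiv 1 \pmod{M}$ (possible among arbitrarily large integers by the Chinese remainder theorem) gives $v_i\equiv u_i \pmod M$, so the reduction of $\langle\mathcal{S}\rangle$ modulo $M$ is still all of $\SL_n(\Z/M)$.

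I expect the density statement, not the geometry, to be the main obstacle: the Schottky conditions are arranged routinely by taking high powers, whereas making a \emph{finite} family profinitely dense forces one to combine strong approximation, to dispose of all but finitely many primes at once, with explicit control of the reductions modulo the remaining bad primes. The delicate part is reconciling the two competing demands on the exponents $m_i$ — large enough for proximal ping-pong dynamics, yet congruent to $1$ modulo $M$ to preserve surjectivity onto $\SL_n(\Z/M)$ — and checking that flags clustered near a single $(p,L)$ can simultaneously generate a Zariski-dense group and realise the prescribed reductions. This is exactly where the argument of \cite{AGS:gen} is adapted.
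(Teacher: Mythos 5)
Your proposal has the same skeleton as the paper's proof: reduce profinite density to congruence surjectivity, build the Schottky set out of high powers of rank-$1$ unipotents with flags clustered near $(p,L)$, get Zariski density from a first batch of generators, invoke strong approximation to isolate finitely many bad primes, add a second batch to cover those, and choose exponents so that powering does not change the reduction modulo the relevant modulus (your observation that $u^m=I+mN$ for rank-one unipotents, so $m\equiv 1 \pmod M$ preserves the image mod $M$, is exactly the role played by the exponents $tk_i+1$ in the paper). Two points deserve comment. First, the order of quantifiers around strong approximation is slippery as you state it: the bad primes depend on the subgroup, so you must first fix the (already powered) Zariski-dense family $H_1$, extract $q$ from \emph{that} group, and only then construct the second family for the modulus $q^2$; the paper's two-stage presentation does precisely this, and your version should be rearranged accordingly rather than speaking of ``the'' bad primes of a group not yet constructed. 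Second, and more substantively, the crux of the construction is producing unipotents whose flags lie in prescribed small neighbourhoods of $(p,L)$ \emph{and} whose reductions modulo the fixed modulus are prescribed. You dispatch this with the one-line assertion that ``rationals in a small real ball realise arbitrary residues modulo a fixed modulus''; this is a weak-approximation statement for the flag variety which is true but is exactly the nontrivial input, and it is left unproved. The paper takes a different and cleaner route here: it conjugates the explicit elementary matrices $e_{i,j}$ by elements $g_i$ of the principal congruence subgroup $K_3$ (resp.\ $K_{q^2}$), so the reduction modulo $3$ (resp.\ $q^2$) is \emph{exactly} $e_{i,j}$ by construction, and it steers the flags $g_i\cdot(p_{e_i},L_{e_i})$ into the prescribed neighbourhoods using the Conze--Guivarc'h minimality theorem (Corollary \ref{cor - conze}) applied to the lattice $K_d$. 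That is the reason Theorem \ref{them - conze} appears in the paper at all; if you prefer your approximation route you must actually prove the density statement you are using, otherwise you should substitute the conjugation-plus-minimality argument.
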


We will make use of the following well known:

\begin{lemma}\label{lem:profinitely-dense}
	Let $n\ge 3$ and $\Gamma \le \SL_n(\Z)$ be a subgroup such that $\Gamma$ projects on $\SL_n(\Z/4\Z)$ and $\SL_n(\Z/p\Z)$ for every odd prime $p$. Then $\Gamma $ is profinitely dense in  $\SL_n(\Z)$. 
\end{lemma}

We sketch a proof that was shown to us by Chen Meiri:

\begin{proof}
For $1 \le i \ne j \le n$ and $a \in \Z$ let $E_{i,j}(a)$ be the matrix with 1 on the diagonal, $a$ on the $(i,j)$-entry and zero elsewhere. For $a,b,m \in \Z$ we write $a\equiv_m b$ to indicate that $a$ is equal to $b$ modulo $m$. For every every prime $p$ and every $m \ge 1$, let $\pi_{p^m}:\SL_n(\Z)\rightarrow \SL_n(\Z/p^m\Z)$ be the reduction map. The following two facts are straightforward:
		
\begin{fact}\label{claim1}
	Let $p$ be an odd prime. If $A \equiv_p E_{i,j}(1)$ then for every $m \ge 1$, $A^{p^m} \equiv_{p^{m+1}} E_{i,j}(p^m)$.  
\end{fact}	

\begin{fact}\label{claim2}
	If $A \equiv_4 E_{i,j}(2)$ then for every $m \ge 1$, $A^{2^m} \equiv_{2^{m+2}} E_{i,j}(2^{m+1})$.  
\end{fact}	

\begin{clm}\label{claim3} Let $p$ be an odd prime and  $S \subseteq \SL_n(\Z)$. If $\pi_p(S) \supseteq \pi_{p}(\{E_{i,j}(1)\mid 1 \le i \ne j \le n\})$ then for every $m \ge 1$,   $\pi_{p^m}(\langle S\rangle)=\SL_n(\Z/p^m\Z)$.
\end{clm}
\begin{proof}
	The proof is by induction on $m$. The case $m=1$ is clear.  Assume that the claim holds for some $m \ge 1$. By \ref{claim1}, 
	$$
	\pi_{p^{m+1}}(\langle S\rangle) \supseteq \{\pi_{p^{m+1}}(E_{i,j}(p^m)) \mid 1\le i \le j \le n\}.
	$$ 
	The result follows since 
	$
	\SL_n(\Z/p^m\Z) / \SL_n(\Z/p^{m+1}\Z) 
	$
	 is isomorphic to $ \mathfrak{sl}_n(\Z/p\Z)$  and $\mathfrak{sl}_n(\Z/p\Z)$ is spanned by  $\{ge_{i,j}(1)g^{-1} \mid g \in \SL_n(\Z/p\Z),\ 1 \le i \ne j \le n\}$.
\end{proof}

Arguing similarly, one obtains:

\begin{clm}\label{claim4} Let $S \subseteq \SL_n(\Z)$. If $\pi_4(S) \supseteq \pi_{4}(\{E_{i,j}(1)\mid 1 \le i \ne j \le n\})$ then for every $m \ge 2$,  $\pi_{2^m}(\langle S\rangle)=\SL_n(\Z/2^m\Z)$.
\end{clm}

\begin{clm}\label{claim5} For every distinct primes $q,p_1,\ldots,p_k$ with $q$ odd and every $m \ge 1$, the group $\Gamma$ contains an element $A$ such that $\pi_q(A)=\pi_q(E_{i,j}(1))$ and  $\pi_{p_r^{m}}(A)=\pi_{p_r^{m}}(I_n)$, for every $1 \le r \le k$. 
\end{clm}
\begin{proof} For every prime $p$, $\PSL_n(\Z/p\Z)$ is simple. The Jordan-Holder theorem implies that $\Gamma$ projects onto $\SL_n(\Z/q\Z) \times \prod_{1 \le r \le k}\PSL_n(\Z/p_r\Z)$. Choose $1 \le l \le n$ distinct from $i$ and $j$. Then $\Gamma$ contains an element $B$ such that  $\pi_q(B)=\pi_q(E_{l,j}(1))$ and  $\pi_{p_r}(B)=\pm \pi_{p_r}(I_n)$, for every $1 \le r \le k$. Choose $C \in \Gamma$ such that $\pi_q(C)=\pi_q(E_{i,l}(1))$. Denote $D:=[C,B]$. Then,  $\pi_q(D)=\pi_q(E_{i,j}(1))$ and  $\pi_{p_r}(D)=\pi_{p_r}(I_n)$, for every $1 \le r \le k$. Choose $t$ such that $t(p_1\cdots p_k)^{m-1}\equiv_q 1$. Then $A=D^t$ is the required element. 
\end{proof}

In order to complete the proof of Lemma \ref{lem:profinitely-dense},
note that in view of the congruence subgroup property it is enough to prove that for every distinct primes $p_1,\ldots,p_k$ and every $m \ge 2$, $\Gamma$ projects onto $\prod_{1 \le r \le k}\SL_n(\Z/p_r^m)$.  Claim \ref{claim4} implies that for every $m \ge 1$, $\Gamma$ projects onto $\SL_n(\Z/2^m\Z)$. The result follows from Claims \ref{claim3} and \ref{claim5}.  
\end{proof}


\begin{proof}[Proof of Proposition \ref{prop -pro-dense}] We recall some facts about Zariski-dense and profinitely-dense subgroups.  For a positive integer $d \ge 2$ let 
$\pi_d:\SL_n(\Z)\rightarrow \SL_n(\Z/d\Z)$ be the modulo-d homomorphism and denote 
$K_d:=\ker \pi_d$. 

\begin{itemize}
\item[(a)] If $H \le \SL_n(\Z)$ and $\pi_p(H)=\SL_n(\Z/p\Z)$ for some odd prime $p$ then 
$H$ is Zariski-dense, \cite{We96} and \cite{lub:14aa}.  
\item[(b)] The strong approximation theorem  of Weisfeiler \cite{Weisfeiler:SAT} and Nori \cite{Nori:SAT} implies that if a subgroup $H$ of  $\SL_n(\Z)$ is Zariski-dense then there exists some positive integer $q$ such that $\pi_d(H)=\SL_n(\Z/d\Z)$ whenever $\gcd(q,d)=1$. 
 \end{itemize}

Fix $\delta \ge \epsilon>0$ and set $\mathcal{A}:=[p]_\epsilon$ and $\mathcal{R}:=[L]_\delta$. For every $1 \le i \le 2n^2-n$, fix a point $p_i$ belonging to an $(n-2)$-dimensional
subspace $L_i$ and positive numbers  $\delta_i \ge \epsilon_i>0$ such that the following two conditions hold:
\begin{itemize}
\item[(1)] $\bigcup_{1 \le i \le 2n^2-n}(p_i)_{\varepsilon_i}\subseteq \mathcal{A}$
 and  $\bigcup_{1 \le i \le 2n^2-n}(L_i)_{\delta_i}\subseteq \mathcal{R}$;
 \item[(2)] For every $1 \le i \ne j \le 2n^2-n$, $(p_i)_{\varepsilon_i} \bigcap (L_j)_{\delta_j}=\emptyset$.
\end{itemize}  
For every $1 \le i \ne j \le n$, let $e_{i,j}\in \SL_n(\Z)$ be the matrix with 1 on the diagonal and on the $(i,j)$-entry and zero elsewhere and
let $e_1,\ldots,e_{n^2-n}$ be an enumeration of the $e_{i,j}$'s. Denote the exponent of $\SL_n(\Z/3\Z)$ by $t$.
If $g_1,\ldots,g_{n^2-n}\in K_3$ and  $k_1,\ldots,k_{n^2-n}$ are positive integers then
$\pi_3(H_1)=\SL_n(\Z/3\Z)$ where $u_i:=g_ie_i^{tk_i+1}g_i^{-1}$ and $H_1:=\langle u_i \mid 1 \le i \le n^2-n \rangle$.  Note that for 
every $u \in \mathcal{U}$ and $g \in \SL_n(\Z)$, $p_{gug^{-1}}=gp_u$ and $L_{gug^{-1}}=gL_u$. Thus, Lemma \ref{lemma - dynamic of unipotent} and Corollary \ref{cor - conze} imply that it is possible to choose $g_i$'s and $k_i$'s
such that:
\begin{itemize}
\item[(3)]  $u_i^k(x) \in (p_i)_{\varepsilon_i}$ for every $1 \le i \le n^2-n$, every $x \not \in (L_i)_{\delta_i}$ and every $k \ne 0$.
\end{itemize}
In particular, $\{ u_1,\ldots, u_{n^2-n}\}$ is a Schottky set with respect to $\mathcal{A}$ and $\mathcal{R}$ which generates a Zariski-dense subgroup $H_1$. 

The strong approximation theorem implies that  there exists some positive integer $q$ such that $\pi_d(H_1)=\SL_n(\Z/d\Z)$ whenever $\gcd(q,d)=1$.
Denote the exponent of $\SL_n(\Z/q^2\Z)$ by $r$.
 As before, there exist $g_{n^2-n+1},\ldots,g_{2n^2-2n}\in K_{q^2}$ and positive integers $k_{n^2-n+1},\ldots,k_{2n^2-2n}$ such that the elements of the form $u_i:=g_ie_i^{rk_i+1}g_i^{-1}$ satisfy:
\begin{itemize}
\item[(4)] $\pi_{q^2}(H_2)=\SL_n(\Z/q^2\Z)$ where   $H_2:=\langle u_i \mid n^2-n+1 \le i \le 2n^2-2n \rangle$;
\item[(5)]   $u_i^k(x) \in (p_i)_{\varepsilon_i}$ for every $n^2-n+1 \le i \le 2n^2-2n$, every $x \not \in (L_i)_{\delta_i}$ and every $k \ne 0$. 
\end{itemize}

Denote $\mathcal{S}:=\{ u_1,\ldots, u_{2n^2-2n}\}$.  Lemma \ref{lem:profinitely-dense} implies that $\pi_d(\langle \mathcal{S} \rangle)=\SL_n(\Z/d\Z)$ for every $d \ge 1$. 
Thus, $(\mathcal{S},\mathcal{A},\mathcal{R})$ is the required profinitely-dense Schottky system.
\end{proof}


Zorn's lemma implies that every proper subgroup $H$ of $\SL_n(\Z)$ is contained in a maximal subgroup $M$
(Since $\SL_n(\Z)$ is finitely generated an increasing union of proper subgroups is a proper subgroup). 
If $H$ is profinitely-dense then so is $M$; hence $M$ should
have infinite index. Thus, Theorem \ref{thm counting} follows from the following proposition:

\begin{theorem} 
Let $n \ge 3$. There exist $2^{\aleph_0}$ infinite-index profinitely-dense subgroups of $\SL_n(\Z)$
such that the union of any two of them generates $\SL_n(\Z)$.
\end{theorem}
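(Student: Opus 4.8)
The plan is to build a binary tree of profinitely-dense Schottky systems and to read off one subgroup from each infinite branch. Concretely, I would construct, for every finite binary word $s \in \{0,1\}^{<\omega}$, a profinitely-dense Schottky system $(\mathcal{S}_s,\mathcal{A}_s,\mathcal{R}_s)$ so that whenever $s$ is a prefix of $t$ the system at $t$ contains the one at $s$. The root system is produced by Proposition \ref{prop -pro-dense}, starting from any rational flag $p\in L\in\mathbb{L}_{n-2}$ and small sets $\mathcal{A}_\emptyset=[p]_\epsilon$, $\mathcal{R}_\emptyset=[L]_\delta$. For an infinite branch $x\in\{0,1\}^{\N}$ I set $H_x:=\langle\bigcup_k \mathcal{S}_{x|_k}\rangle$. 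Since every finite subset of this generating set lies in one of the finite Schottky systems along the branch, the ping-pong Lemma \ref{lemma - ping pong} shows that the generators are free, so $H_x$ is a free group and in particular $H_x\neq\SL_n(\Z)$ (the latter is not free for $n\ge 3$). As $H_x\supseteq\langle\mathcal{S}_\emptyset\rangle$, it is profinitely dense, and a proper profinitely-dense subgroup has infinite index (a finite-index subgroup is closed in the profinite topology, so if it were also dense it would be everything). Thus each $H_x$ is automatically an infinite-index profinitely-dense subgroup.

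The heart of the construction is the branching step, engineered so that two branches which first disagree at a node $s$ generate $\SL_n(\Z)$ together. At node $s$ I would pick a rational point $p_s$ and two \emph{distinct} rational hyperplanes $L_s^0,L_s^1\in\mathbb{L}_{n-2}$, both containing $p_s$, together with a pair $\delta\ge\epsilon>0$, chosen so that the closed neighbourhoods $[p_s]_\epsilon,[L_s^0]_\delta,[L_s^1]_\delta$ are disjoint from $\mathcal{A}_s\cup\mathcal{R}_s$ in the manner required by Lemma \ref{adding}. Such data exist because at each finite stage $\mathcal{A}_s$ and $\mathcal{R}_s$ are finite unions of closed balls, so their complement is open and dense, while rational flags are dense in the relevant Grassmannians. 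Applying Lemma \ref{adding} twice then yields rank-1 unipotents $v_s^0,v_s^1\in\mathcal{U}$ with $p_{v_s^0}=p_{v_s^1}=p_s$, $L_{v_s^0}=L_s^0$, $L_{v_s^1}=L_s^1$, and Schottky systems extending $(\mathcal{S}_s,\mathcal{A}_s,\mathcal{R}_s)$ by adjoining $v_s^0$ to the $0$-child and $v_s^1$ to the $1$-child. The twins are never placed in the same system, so no Schottky condition is violated; iterating along both children, always reserving fresh regions of $\BP(k^{n})$ at deeper nodes, produces the whole tree.

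Now fix distinct branches $x\neq y$ and let $s$ be the node where they first diverge, say $x$ passing through $s0$ and $y$ through $s1$; then $v_s^0\in H_x$ and $v_s^1\in H_y$. Both are rank-1 unipotents with the same point of attraction $p_s$, so by the commutation remark preceding Lemma \ref{lemma - finding a good pair} they commute, and since $L_s^0\neq L_s^1$ they lie in distinct equivalence classes of Lemma \ref{lemma - structure of unipotent}, whence there is no relation $(v_s^0)^a=(v_s^1)^b$ with $(a,b)\neq(0,0)$; therefore $\langle v_s^0,v_s^1\rangle\cong\Z^2$. The group $\Gamma:=\langle H_x,H_y\rangle$ contains this $\Z^2$, contains $v_s^0\in\mathcal{U}$, and is profinitely dense because it contains $H_x$, so Theorem \ref{Thm - Venkataramana} forces $\Gamma=\SL_n(\Z)$. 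In particular each $H_x$ is proper, so $x\mapsto H_x$ is injective (equal subgroups could never co-generate $\SL_n(\Z)$), yielding $2^{\aleph_0}$ distinct subgroups with exactly the asserted property.

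The main obstacle is the bookkeeping inside the inductive branching: one must simultaneously preserve all four Schottky conditions as the attracting and repelling sets grow, keep enough untouched room in $\BP(k^{n})$ to continue along every branch indefinitely, and, at each branching, plant the twin unipotents sharing an attracting point but with distinct fixed hyperplanes. This last device is the crux, as it is precisely what lets Venkataramana's theorem convert divergence of branches into co-generation of the full group. Reconciling these constraints is routine given the density of rational flags and the finiteness of the neighbourhood systems at each stage, but it is where essentially all the care of the argument lies.
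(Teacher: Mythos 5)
Your proposal is correct and is essentially the paper's own argument: both rest on Proposition \ref{prop -pro-dense} for a profinitely-dense Schottky base, then plant, for each index, a pair of rank-1 unipotents sharing an attraction point but with distinct fixed hyperplanes (hence generating $\Z^2$), place exactly one twin in each subgroup, and invoke Venkataramana's Theorem \ref{Thm - Venkataramana} when two subgroups pick up both twins. The only cosmetic difference is that the paper fixes all the flags $(p_i,L_i)$ with disjoint neighbourhoods in advance and indexes the subgroups by functions $f:\N\to\{0,1\}$, whereas you organize the same data as a binary tree, which entails slightly more bookkeeping but changes nothing essential.
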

    
\begin{proof} For every non-negative integer $i$ fix a rational point $p_i$ belonging to a rational $(n-2)$-dimensional 
subspace $L_i$ and two numbers $\delta_i \ge \epsilon_i >0$ such that $[p_i]_{\varepsilon_i} \bigcap [L_j]_{\delta_j}=\emptyset$  for every $i \ne j$.
Let $\mathcal{A}$ and $\mathcal{R}$ be the closures of $\bigcup_{i \ge 0}(p_i)_{\epsilon_i}$ and $\bigcup_{i \ge 0}(L_i)_{\delta_i}$ respectively. Proposition \ref{prop -pro-dense} implies that there exists a finite subset $\mathcal{S}_0\subseteq \mathcal{U}$ such that $(\mathcal{S}_0,\mathcal{A}_0,\mathcal{R}_0)$ is a profinitely-dense
Schottky system where  $\mathcal{A}_0 = [p_0]_{\varepsilon_{0}}$ and $\mathcal{R}_0 = [L_0]_{\delta_0}$.  Lemmas \ref{lemma - structure of unipotent} and \ref{lemma - dynamic of unipotent} imply that for every $i \ge 1$ there are $u_{i,1},u_{i,2} \in \mathcal{U}$ such that:
\begin{enumerate}
\item $p_i=p_{u_{i,1}}=p_{u_{i,2}}$ and $L_{u_{i,1}} \ne L_{u_{i,2}} \subseteq (L_i)_{\delta_i}$ (hence, $\langle u_{i,1},u_{i,2}\rangle\cong \Z^2$);
\item $u_{i,j}^k(x) \in (p_i)_\varepsilon$ for every $1 \le j \le 2$, every $x \not \in (L_i)_{\delta_i}$ and every $k \ne 0$. 
\end{enumerate}
For every function $f$ from the positive integers to $\{0,1\}$ the set $\mathcal{S}_f:=\mathcal{S}_0\bigcup \{u_{i,f(i)\mid i\ge 1}\}$ is a Schottky set  with respect to the attracting set  $\mathcal{A}$ and the repelling set $\mathcal{R}$. If $f$ and $g$ are distinct functions then $\mathcal{S}_f \bigcup \mathcal{S}_g $ contains $\{u_{i,1},u_{i,2}\}$ for some $i \ge 1$ so Theorem \ref{Thm - Venkataramana} implies that 
$\langle \mathcal{S}_f \bigcup \mathcal{S}_g \rangle =\SL_n(\Z)$. 
\end{proof}

\section{Higher transitivity in negative curvature settings} \label{sec:ht}

\noindent
Our goal in this section is to prove theorem \ref{thm:rank_one_ht}

\subsection{Precise ping-pong dynamics}  \label{sec:ppd} 
The proof will proceed via the topological dynamics of the action of $\Gamma$ on $\PP := \PP(k^2)$ and on the limit set $L = L(\Gamma) \subset \PP$ (see Lemma \ref{lem:limit}). By a neighborhood of a point $p$ we mean any set containing $p$ in its interior. By a {\it{fundamental domain}} for the action of $\Gamma$ on an open invariant subset $Y \subset \PP$, we will always refer to an open subset $O \subset Y$ satisfying (i) $\gamma O \bigcap O=\emptyset, \ \forall \gamma\in\Gamma\setminus\{1\}$, (ii) $Y \subset \bigcup_{\gamma \in \Gamma} \gamma \overline{O}$ and (iii) $\partial O$ has an empty interior. 

Let $\gamma \in \SL_2(k)$ be a very proximal element. In our current rank one setting there is a new symmetry between the attracting and repelling neighbourhoods: the repelling hyperplane $\overline{H}^{+}_{\gamma}$ reduces to a single point and coincides with the attracting point of the inverse $\overline{v}^{-}_{\gamma}$. To emphasize this we slightly change the notation. We say that $\Omega_{\gamma}^{\pm} \subset \PP$ are attracting and repelling neighborhoods for a very proximal element $\gamma$ on $\PP$ if they are closed, disjoint neighborhoods of the attracting and repelling points $\overline{v}_{\gamma}^{\pm} \in \Omega_{\gamma}^{\pm}$ satisfying $\gamma \left(L \setminus \Omega^{-} \right) \subset \Omega^{+}$, (or equivalently $\gamma^{-1} \left( L \setminus \Omega^{+} \right) \subset \Omega^{-}$). We will further call such attracting and repelling neighborhoods {\it{precise}} if $O = L \setminus (\Omega^{+} \bigcup \Omega^{-})$ is a fundamental domain for the action of the cyclic group $\langle \gamma \rangle$ on $L \setminus \{\overline{v}^{+},\overline{v}^{-}\}$. 
\begin{lemma} \label{lem:limit} 
Let $k$ be a local field, and $\Gamma < \SL_2(k)$ a center free unbounded countable group. Assume $\Gamma$ neither fixes a point, nor a pair of points in $\PP(k^2)$. Then
\begin{enumerate}
\item \label{itm:exist_prox} $\Gamma$ contains a very proximal element. 
\item \label{itm:limit} There is unique minimal closed, $\Gamma$-invariant subset $L=L(\Gamma) \subset \PP(k^2)$. Moreover $L(\Gamma)$ is perfect, as a topological space. 
\item \label{itm:dense_pairs} The collection $\{(\overline{v}_{\gamma}^{+}, \overline{v}_{\gamma}^{-}) \ | \ \gamma \in \Gamma {\text{ very proximal}} \}$ is dense in $L(\Gamma)^2$. 
\item \label{itm:ess_free} If $e \ne \gamma \in \Gamma$ then $\Supp(\gamma) = \{x \in L(\Gamma) \ | \ \gamma x \ne x\}$ is an open dense subset of $L(\Gamma)$. 
\end{enumerate}
\end{lemma}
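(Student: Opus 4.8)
The plan is to prove the four items in order, each feeding the next. I begin with (1). I claim first that the hypotheses force $\Gamma$ to be Zariski dense in $\SL_2$. Since $\Gamma$ is unbounded it is infinite, so its Zariski closure $\mathbf{G} = \overline{\Gamma}^Z$ has positive-dimensional identity component. Every proper connected subgroup of $\SL_2$ lies in a Borel subgroup or in a maximal torus, and the normalizer of the former fixes a point of $\PP(k^2)$ while the normalizer of the latter preserves a pair of points; both are excluded by hypothesis, so $\mathbf{G} = \SL_2$. Being unbounded and Zariski dense, $\Gamma$ contains a proximal element by the contraction/irreducibility machinery of \cite{BG:Topological_Tits}: unboundedness yields $\epsilon$-contracting elements for every $\epsilon$ (via the Cartan decomposition and Lemma \ref{lem:equiv_contraction}), and Zariski density lets one conjugate the attracting point off the repelling hyperplane. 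In the rank-one $\SL_2$ setting such a proximal element is automatically very proximal, since an eigenvalue of modulus greater than $1$ makes the inverse proximal as well.

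For (2) I set $L := \overline{P}$ with $P = \{\overline{v}_{\gamma}^{+} : \gamma \in \Gamma \text{ very proximal}\}$; this is nonempty by (1) and $\Gamma$-invariant since $g\gamma g^{-1}$ is very proximal with attracting point $g\,\overline{v}_{\gamma}^{+}$, so its closure $L$ is closed and invariant. To see $L$ is minimal, let $C$ be any nonempty closed $\Gamma$-invariant set; as $\Gamma$ fixes no point, $|C| \ge 2$, so for each very proximal $\gamma$ there is $x \in C$ with $x \ne \overline{v}_{\gamma}^{-}$, and then $\gamma^n x \to \overline{v}_{\gamma}^{+}$ forces $\overline{v}_{\gamma}^{+} \in C$. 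Hence $P \subseteq C$ and $L \subseteq C$, so $L$ is the unique minimal closed invariant set. Perfectness is then obtained in two steps. First, $L$ cannot be finite: a finite invariant set is invariant under a very proximal $\gamma$, and north--south dynamics forces every point other than $\overline{v}_{\gamma}^{-}$ to eventually equal $\overline{v}_{\gamma}^{+}$, so $L \subseteq \{\overline{v}_{\gamma}^{+},\overline{v}_{\gamma}^{-}\}$, contradicting that $\Gamma$ preserves no pair. Second, with $L$ infinite, choosing $y \in L \setminus \{\overline{v}_{\gamma}^{+},\overline{v}_{\gamma}^{-}\}$ gives $\gamma^n y \to \overline{v}_{\gamma}^{+}$ through distinct points, so $\overline{v}_{\gamma}^{+}$ is not isolated; the set of non-isolated points is closed, invariant and nonempty, hence equals $L$ by minimality, so $L$ is perfect.

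Item (4) is then short. The set $\Fix(\gamma) \cap L$ is closed, so $\Supp(\gamma)$ is open in $L$. Since $\Gamma$ is center-free we have $-I \notin \Gamma$, so every $\gamma \ne e$ is non-scalar and fixes at most two points of $\PP(k^2)$; thus $\Fix(\gamma) \cap L$ is finite. In a perfect compact space every nonempty open set is infinite, so finite sets have empty interior, and therefore $\Supp(\gamma) = L \setminus \Fix(\gamma)$ is dense.

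The real work is (3). Given a target $(a,b) \in L \times L$ with neighbourhoods $U \ni a$ and $V \ni b$, I want a very proximal $\gamma$ with $\overline{v}_{\gamma}^{+} \in U$ and $\overline{v}_{\gamma}^{-} \in V$. Since $P$ is dense in $L$ by (2), I choose very proximal $g_1, g_2$ with $\overline{v}_{g_1}^{+} \in U$ and $\overline{v}_{g_2}^{+} \in V$, and using perfectness of $L$ arrange that the four points $\overline{v}_{g_1}^{\pm}, \overline{v}_{g_2}^{\pm}$ are distinct with disjoint attracting/repelling neighbourhoods. For large $N$ the element $\gamma = g_1^N g_2^{-N}$ is very proximal: for $x$ away from $\overline{v}_{g_2}^{+}$ the map $g_2^{-N}$ sends $x$ near $\overline{v}_{g_2}^{-}$ and then $g_1^N$ pushes it near $\overline{v}_{g_1}^{+}$, while tracing $\gamma^{-1} = g_2^N g_1^{-N}$ symmetrically places the repelling point of $\gamma$ near $\overline{v}_{g_2}^{+}$; the required contraction estimates are exactly those of Lemmas \ref{fix} and \ref{lem:equiv_contraction}. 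Hence $\overline{v}_{\gamma}^{+} \in U$ and $\overline{v}_{\gamma}^{-} \in V$. I expect the main obstacle to be the general-position bookkeeping here — guaranteeing that the fixed points of $g_1$ and $g_2$ are distinct and the ping-pong neighbourhoods disjoint uniformly enough to run the estimate, particularly in the diagonal case $a = b$, where perfectness of $L$ is exactly what supplies the room to separate the chosen attracting points.
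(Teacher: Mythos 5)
Your proof is correct and follows essentially the same route as the paper: unboundedness plus the absence of fixed points or invariant pairs produces very proximal elements, $L$ is realized as the closure of attracting points, minimality and perfectness come from north--south dynamics, and item (4) reduces to $|\Fix(\gamma)|\le 2$ inside a perfect set. The only real divergence is in item (3), where you take $\gamma=g_1^N g_2^{-N}$ for two independently chosen very proximal elements — which additionally requires $\overline{v}_{g_1}^{-}\ne\overline{v}_{g_2}^{-}$, a condition you do not directly control (though it is easily arranged by inserting an $h\in\Gamma$ with $h\overline{v}_{g_2}^{-}\ne\overline{v}_{g_1}^{-}$ between the two powers) — whereas the paper uses $q=h^{+}g^{n}(h^{-})^{-1}$ built from a single very proximal $g$ and elements $h^{\pm}$ supplied by minimality, which sidesteps that piece of bookkeeping.
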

\begin{proof}
Since $\Gamma$ is unbounded it contains elements with an unbounded ratio between their singular values. Thus by Lemma \ref{lem:equiv_contraction}, for every $\epsilon > 0$, we can arrange for $g \in \Gamma$ such that both $g,g^{-1}$ are $\epsilon$-contractions. Let $h \in \Gamma$ be such that $h \overline{v}_{g}^{+} \ne v_{g}^{-}$ then $h g^n$ will be very proximal for a high enough value of $n$. 

Let $g$ be a very proximal element with attracting and repelling points $\overline{v}^{\pm}$. We know that $\lim_{n \arrow \infty} g^n(x) = \overline{v}^{+}, \forall x \ne \overline{v}^{-}$ and if $h \overline{v}^{-} \ne \overline{v}^{-}$ then $\lim_{n \arrow \infty} g^n h (\overline{v}^{-}) = \overline{v}^{+}$. So $L:=\overline{\Gamma (\overline{v}_{1}^{+})}$ is contained in every closed $\Gamma$-invariant set proving the first sentence of (\ref{itm:limit}). 

Now let $U^{\pm} \subset L(\Gamma)$ be two (relatively) open subsets. As the action of $\Gamma$ on $L(\Gamma)$ is clearly minimal we have elements $h^{\pm} \in \Gamma$ such that $h^{\pm} \overline{v}^{\pm} \in U^{\pm}$, respectively. The element $q=h^{+} g^n (h^{-})^{-1}$ will be very proximal with attracting and repelling points $\overline{v}_{q}^{\pm} \in U^{\pm}$. This proves (\ref{itm:dense_pairs}). If, $q,g$ are two very proximal elements with different attracting and repelling points, then $\{q^{n}\overline{v}_{g}^{+} \ | \ n \in \Z\}$ is an infinite set of points contained in $L(\Gamma)$. Thus $L(\Gamma)$ is an infinite compact minimal $\Gamma$-space, and hence perfect.  This concludes (\ref{itm:limit}). Finally (\ref{itm:ess_free}) follows from the fact that $|\Fix(\gamma)| < 3$ for every $e \ne \gamma \in \Gamma$. 
 \end{proof}

\begin{lemma}  \label{lem:hyp_prec}
Any very proximal element $\gamma \in \Gamma\le  \SL_2(k)$ admits precise attracting and repelling neighborhoods $\Omega_{\gamma}^{\pm} \subset L(\Gamma)$, in its action on $L(\Gamma)$. 
\end{lemma}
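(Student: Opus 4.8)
The plan is to transfer the statement into a purely dynamical fact about the cyclic group $\langle\gamma\rangle$ acting on $L = L(\Gamma)$, and then to manufacture the two neighbourhoods as the two sides of a single ``collar''. First I would record the north--south dynamics of $\gamma$ on $L$. Since $\gamma$ is very proximal on $\PP$ and $L$ is closed and $\gamma$-invariant, the restriction of $\gamma$ to $L$ fixes exactly the two points $\overline{v}_{\gamma}^{+},\overline{v}_{\gamma}^{-}$ (by Lemma \ref{lem:limit}(\ref{itm:ess_free}), $|\Fix(\gamma)|<3$), with $\gamma^{n}x\to\overline{v}_{\gamma}^{+}$ for every $x\neq\overline{v}_{\gamma}^{-}$ and $\gamma^{-n}x\to\overline{v}_{\gamma}^{-}$ for every $x\neq\overline{v}_{\gamma}^{+}$, uniformly away from the opposite fixed point. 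Consequently $\langle\gamma\rangle\cong\Z$ acts freely and properly discontinuously on $L':=L\setminus\{\overline{v}_{\gamma}^{+},\overline{v}_{\gamma}^{-}\}$, and ``preciseness'' is exactly the assertion that this $\Z$-action admits an open fundamental domain of the prescribed shape $L\setminus(\Omega^{+}\cup\Omega^{-})$.

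The construction I would use starts from a single closed neighbourhood $\Omega$ of $\overline{v}_{\gamma}^{+}$ that is strictly forward invariant, $\gamma\Omega\subset\mathrm{int}\,\Omega$, is regular closed (equal to the closure of its interior), and has nowhere dense boundary $\partial\Omega$. Given such an $\Omega$, set $\Omega^{+}:=\gamma\Omega$ and $\Omega^{-}:=L\setminus\mathrm{int}\,\Omega$, so that $O:=L\setminus(\Omega^{+}\cup\Omega^{-})=\mathrm{int}\,\Omega\setminus\gamma\Omega$ is the open collar between $\gamma\Omega$ and $\Omega$. These are disjoint closed neighbourhoods of $\overline{v}_{\gamma}^{\pm}$ with $\gamma(L\setminus\Omega^{-})=\gamma(\mathrm{int}\,\Omega)=\mathrm{int}\,\Omega^{+}\subset\Omega^{+}$, hence attracting and repelling neighbourhoods in the required sense. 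The family $\{\gamma^{n}\Omega\}_{n\in\Z}$ is nested, with $\bigcap_{n}\gamma^{n}\Omega=\{\overline{v}_{\gamma}^{+}\}$ and $\bigcup_{n}\gamma^{n}\Omega=L\setminus\{\overline{v}_{\gamma}^{-}\}$ (every point $\neq\overline{v}_{\gamma}^{-}$ enters $\mathrm{int}\,\Omega$ under forward iteration). The translates $\gamma^{n}O$ therefore lie in the pairwise disjoint collars $\gamma^{n}\Omega\setminus\gamma^{n+1}\Omega$, giving disjointness; each $x\in L'$ lies in a unique such collar and, using that $\Omega$ is regular closed with nowhere dense boundary, one checks $\gamma^{-n}x\in\overline{O}$, giving the covering property; and $\partial O\subseteq\partial\Omega\cup\gamma\,\partial\Omega$ is nowhere dense, giving the last condition. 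Thus $O$ is a fundamental domain and $\Omega^{\pm}$ are precise.

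The main obstacle I anticipate is producing $\Omega$ with all three features simultaneously: strict forward invariance together with a nowhere dense, regular boundary. For this I would take $\Omega=\overline{\{x\in L:d(x,\overline{v}_{\gamma}^{+})<\rho\}}$ for a small radius $\rho$. Local contraction at the attracting fixed point yields $\gamma\Omega\subset\mathrm{int}\,\Omega$ once $\rho$ is small, while the radius may be chosen generically: the spheres $\{d(\cdot,\overline{v}_{\gamma}^{+})=\rho\}$ are pairwise disjoint, so, $L$ being separable, all but countably many of them have empty interior and are hence nowhere dense, forcing $\partial\Omega$ to be nowhere dense for such $\rho$. I would remark finally that when $L$ is totally disconnected (for instance for non-archimedean $k$) one may instead take $\Omega$ clopen, whence $\partial\Omega=\emptyset$ and the point-set bookkeeping in the covering step disappears entirely.
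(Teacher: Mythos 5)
Your proof is correct and follows essentially the same route as the paper: the paper regularizes the repelling neighbourhood and sets $\Omega^{+}:=\overline{\gamma(L\setminus\Omega^{-})}$, which is exactly your $\Omega^{+}=\gamma\Omega$, $\Omega^{-}=L\setminus\mathrm{int}\,\Omega$ for a regular closed, strictly forward-invariant $\Omega$, and both arguments then locate each point of $L\setminus\{\overline{v}^{+},\overline{v}^{-}\}$ in a unique collar $\gamma^{n}\Omega\setminus\gamma^{n+1}\Omega$ via the north--south dynamics. Your write-up is somewhat more explicit about the point-set details (covering by closures of translates, nowhere-density of $\partial O$) that the paper's brief verification leaves implicit.
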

\begin{proof}
Given attracting and repelling neighborhoods $\Omega_1^{\pm} \subset L(\Gamma)$, we replace them by precise neighborhoods by setting $\Omega^{-}:=\overline{\mathring{\Omega_1^{-}}}$ and $\Omega^{+}:= \overline{\gamma(L(\Gamma) \setminus \Omega^{-})}$. It is clear that $\Omega^{\pm}$ thus defined are closed that the sets $\{\gamma^k O \ | \ k \in \Z\}$ are pairwise disjoint where $O = L(\Gamma) \setminus (\Omega^{-} \bigcup \Omega^{+} )$. Given any $x \in L(\Gamma) \setminus \{\overline{v}^{+}_{\gamma}, \overline{v}^{-}_{\gamma}\}$ let $k \in \Z$ be the largest number such that $\gamma^k x \not \in \Omega^{+}.$ 
Then $\gamma^k x$ is neither in $\Omega^+$ nor in $\mathring\Omega^-$, since $\gamma^{k+1}x\in\Omega^+$. It follows that $\gamma^kx\in O$.
\end{proof}

In this rank one setting we can obtain a more precise version of the ping-pong Lemma.  \ref{lem:ping-pong} 
\begin{lemma} \label{lem:precise_ping_pong} (Precise ping-pong lemma)  Let $S = \{\gamma_1, \gamma_2,\ldots,\gamma_N\} \subset \SL_2(k)$ be a collection of $N \ge 2$ very proximal elements. Set $\Delta = \langle S \rangle$. Suppose that $\{\Omega^{\pm}_i \subset \PP\}_{i = 1}^{N}$ are pairwise disjoint, {\it{precise}} attracting and repelling neighborhoods for the $\gamma_i$ action on $\PP$. Set $\Omega_i = \Omega^{+}_i \bigcup \Omega^{-}_i$ and $\Omega = \bigcup_{i=1}^{N} \Omega_i$ and assume that $O = \PP \setminus \Omega$ is nonempty. Then:
\begin{enumerate}
\item \label{itm:ind} $S$ freely generates a free group $\Delta$. 
\item \label{itm:limit2} There is a unique minimal closed $\Delta$-invariant subset $L=L(\Delta) \subset \PP$. 
\item \label{itm:map} There is a $\Delta$-equivariant, homeomorphism $\ell: \partial \Delta \arrow L$. 
\item \label{itm:nd} $\Delta O$ is open dense and $L$ is nowhere dense in $\PP$. 
\item \label{itm:fd} $O$ is a fundamental domain for the action $\Delta \curvearrowright \PP \setminus L.$  
\end{enumerate}
\end{lemma}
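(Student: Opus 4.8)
The plan is to reduce all five claims to a single combinatorial device: a coding of the points of $\PP$ by finite and infinite reduced words in the generators, governed by the ping-pong dynamics. Claim (\ref{itm:ind}) is immediate: since the precise neighbourhoods $\{\Omega_i^{\pm}\}$ are pairwise disjoint, the attracting set of each $\gamma_i^{\pm 1}$ is disjoint from the attracting and repelling sets of every $\gamma_j^{\pm 1}$ with $j\neq i$, which is exactly the hypothesis of the ping-pong Lemma \ref{lem:ping-pong}; hence $S$ freely generates $\Delta\cong F_N$. For the remaining claims I would introduce the expanding return map $T\colon\Omega\to\PP$ defined by $T|_{\Omega_i^{+}}=\gamma_i^{-1}$ and $T|_{\Omega_i^{-}}=\gamma_i$, and assign to each point $x$ its itinerary, a finite or infinite word $a_1a_2\cdots$ with $a_n=\gamma_i^{\pm1}$ recording which $\Omega_i^{\pm}$ contains $T^{n-1}x$, continued while $T^{n-1}x\in\Omega$. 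A direct computation gives $T^{n}x=(a_1\cdots a_n)^{-1}x$, so $T^{n}x\in O$ precisely when $x\in(a_1\cdots a_n)O$.

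The heart of the argument is to upgrade this itinerary to a homeomorphism. Writing $W_n=a_1\cdots a_n$ and $A(a)$ for the attracting neighbourhood of a letter (so $A(\gamma_i^{\pm1})=\Omega_i^{\pm}$), the ping-pong inclusion $\gamma(\PP\setminus\Omega^{-})\subset\Omega^{+}$ together with pairwise disjointness yields $a_nA(a_{n+1})\subset A(a_n)$ whenever $a_{n+1}\neq a_n^{-1}$; hence the closed sets $J_n:=W_{n-1}A(a_n)$ are nested decreasing. Two facts then combine. First, by the contraction estimates of Lemma \ref{lem:equiv_contraction}, each $A(a_{n+1})$ lies at a uniform positive distance from the repelling set of $a_n$ (finitely many pairwise disjoint compacta), so the generators act on the successive pieces with a uniform Lipschitz factor $<1$; thus $\mathrm{diam}(J_n)\to0$ and $\bigcap_nJ_n$ is a single point $\ell(w)$. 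Second, the hypothesis that the neighbourhoods are \emph{precise} is exactly what forces every itinerary of a non-returning point to be a \emph{reduced} word, and conversely every infinite reduced word to be realized, since precision gives $\gamma_i^{-1}(\Omega_i^{+})\cap\mathring{\Omega_i^{-}}=\emptyset$ and the cyclic tiling $\PP\setminus\{\overline v_i^{\pm}\}=\bigsqcup_{n\in\Z}\gamma_i^{n}\overline O$. Granting these, $\ell\colon\partial\Delta\to\PP$ is a well-defined $\Delta$-equivariant injection (distinct infinite reduced words eventually fall into disjoint $J_n$'s), it is continuous, and as a continuous injection from the compact space $\partial\Delta$ it is a homeomorphism onto $L:=\ell(\partial\Delta)$, giving (\ref{itm:map}). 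Minimality and uniqueness of $L$ in (\ref{itm:limit2}) then follow exactly as in Lemma \ref{lem:limit}: the pairs $(\overline v_\gamma^{+},\overline v_\gamma^{-})$ of very proximal $\gamma\in\Delta$ are dense in $L^2$, so every closed invariant set absorbs an attracting point and hence contains $L$.

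With the coding in place, (\ref{itm:fd}) and (\ref{itm:nd}) are book-keeping. The identity $T^nx=W_n^{-1}x$ shows that $\PP\setminus L$ is precisely the set of points whose $T$-orbit eventually enters $O$, that is $\PP\setminus L=\bigcup_{g\in\Delta}gO$; and $gO\cap hO\neq\emptyset$ forces $h^{-1}g\,O\cap O\neq\emptyset$, which by the ping-pong estimate $WO\subset A(a_1)\subset\Omega$ valid for every nontrivial reduced $W$ implies $g=h$. Thus the translates $\{gO\}_{g\in\Delta}$ are pairwise disjoint and tile $\PP\setminus L$, giving the first two fundamental-domain conditions, while the third follows since $\partial O\subset\bigcup_i\partial\Omega_i^{\pm}$ and the precise neighbourhoods, being regular closed, have nowhere-dense boundary; this proves (\ref{itm:fd}) and shows $\Delta O=\PP\setminus L$ is open. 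Finally $L$ is closed, and the prefixes $W_n$ of the address of any $x\in L$ produce translates $W_nO$ of diameter tending to $0$ that accumulate at $x$; hence $\bigcup_ggO$ is dense, $L$ has empty interior, and (\ref{itm:nd}) follows.

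I expect the main obstacle to be the well-definedness and bijectivity of $\ell$ in the second paragraph: extracting a genuine uniform contraction factor from ``very proximal'' so that the nested sets $J_n$ collapse to a single point, and simultaneously using the \emph{precise} hypothesis in its correct quantitative form to guarantee both that addresses are reduced and that every infinite reduced word is attained. Once $\ell$ is understood as a homeomorphism conjugating the shift on $\partial\Delta$ to the $\Delta$-action, the minimality, fundamental-domain, and nowhere-density statements are essentially formal.
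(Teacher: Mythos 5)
Your proposal is correct and follows essentially the same route as the paper: item (\ref{itm:ind}) via Lemma \ref{lem:ping-pong}, the coding of boundary points by nested sets $W_{n-1}A(a_n)$ shrinking to points via Lemma \ref{lem:equiv_contraction} to get the homeomorphism $\ell:\partial\Delta\to L$, and the fundamental domain by following a point's itinerary until it lands in $\overline O$ (your return map $T$ is just a repackaging of the paper's inductive procedure, and both arguments terminate for the same reason, namely that a non-terminating itinerary would place $x$ in $\ell(\xi)\subset L$). The quantitative point you flag --- extracting a genuine contraction factor so that $\mathrm{diam}(J_n)\to 0$ --- is treated at exactly the same level of detail in the paper, which likewise defers it to the metric estimates of Lemma \ref{lem:equiv_contraction}.
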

\begin{proof}
Statement (\ref{itm:ind}) is the standard ping-pong lemma, as proved for example in \cite[Proposition 3.4]{G:conv_non_ig}. 

Let us denote by $\overline{v_i}^{\pm} \in \PP$ the attracting and repelling points for $\gamma_i$. We know that $\lim_{n \arrow \infty} \gamma_1^n(x) = \overline{v}_{1}^{+}, \forall x \ne \overline{v}_1^{-}$ and $\lim_{n \arrow \infty} \gamma_1^n \gamma_2 (\overline{v}_{1}^{-}) = \overline{v}_{1}^{+}$. Set $L:=\overline{\Delta (\overline{v}_{1}^{+})}$. Then $L$ is contained in every closed $\Delta$-invariant set proving (\ref{itm:limit2}). Note that $L$ contains every attracting or repelling point for every $e \ne \delta \in \Delta$. 

We identify $\partial \Delta$ (the geometric boundary of the free group $\Delta$), with infinite reduced words in $S \sqcup S^{-1}$. If $\xi \in \partial \Delta$ let $\xi(k) \in \Delta$ denote its $k$-prefix. Now define recursively a map $\ell: \Delta \setminus \{e\} \arrow \Cl(\PP)$ of $\Delta$ into the space of closed subsets of $X$, by setting $\ell(s^{\epsilon}) = \Omega_s^{\epsilon}$ for $s \in S, \epsilon \in \{\pm 1\}$ and $\ell(s^{\epsilon} w) = s^{\epsilon} \ell(w)$ whenever $w \in \Delta$ is represented by a reduced word that does not start with $s^{-\epsilon}$. The ping-pong dynamics yields two properties that are easy to verify:
(i) $\ell(w) \subset \ell(v)$ whenever the reduced word representing $v$ is a prefix of that representing $w$, (ii) $\gamma \ell(w) \subset \ell(\gamma w)$ whenever $\gamma, w \in \Delta$ and $w$ is represented by a long enough word in the generators. 

Now we extend this definition to $\partial \Delta$ by setting $\ell(\xi) = \bigcap_{k \in \N} \ell(\xi(k))$ for every $\xi \in \partial \Delta$. $\ell(\xi)$ is nonempty by the finite intersection property. Using the metric contraction properties of the very proximal elements $\gamma^{\pm}_i$ given in Lemma \ref{lem:equiv_contraction}, one verifies that $\ell(\xi(k)) \in \PP$ is a single point. Thus $\ell$ defines a point map, which by abuse of notation we will still denote by $\ell: \partial \Delta \arrow \PP$. By property (i) above this map is injective, by property (ii) it is $\Delta$-invariant. Assume that $\xi_n \rightarrow \xi$ is $\partial \Delta$. By the definition of the standard topology on $\partial \Delta$ this just means that for every $m \in \N$ the m-prefixes $\{ \xi_n(m) \ | \ n \in \Z\}$ eventually stabilize, and are equal to $\xi(m)$. Consequently $\ell(\xi_n) \in \ell(\xi(m))$ for every $n$ large enough, which immediately implies continuity. Because both spaces are compact and metric $\ell$ is a homeomorphism onto its image. But the image is a minimal $\Delta$ set, hence equal to $L(\Delta)$ by (\ref{itm:limit2}). This concludes the proof of (\ref{itm:map}). 

Set $O_i = \PP \setminus \Omega_i$ and note that $O = \bigcap_{i =1}^{N} O_i$. By our assumption $O_i$ is a fundamental domain for the action of $\langle \gamma_i \rangle$ on $\PP \setminus \{\overline{v}_{i}^{\pm}\}$. Note that, since the sets $\Omega_i$ are disjoint $\overline{O} = \bigcap_{i=1}^{N} \overline{O_i}$. 
It is clear from the ping-pong dynamics that the $\Delta$ translates of this set are disjoint. Thus to demonstrate (\ref{itm:fd}) we take $x \in \PP \setminus L(\Delta)$ and produce some element $\delta \in \Delta$ such that $\delta x \in \overline{O}$. We achieve this by an inductive procedure setting $x_0:=x$ and defining a sequence of points $x_0,x_1,\ldots \subset \Delta x_0$, stoping on the first time that we hit $\overline{O}$. Thus if $x_{m} \in \overline{O}$ we are finished. If not there is a unique index $1 \le i_m \le N$ such that $x_m \not \in \overline{O}_{i_m}$. But $x_m \not \in \{\overline{v}_{i_m}^{\pm}\} \subset L(\Delta)$ so we can find some $n_m \in \Z$ so that $\gamma_{i_m}^{-n_m} x_m \in \overline{O}_{i_m}$. This inductive procedure must terminate after finitely many steps. Indeed it is easy to verify by induction that $\gamma_0^{n_0} \gamma_1^{n_1} \ldots \ldots \gamma_m^{n_m}$ is a reduced word and that $x \in \ell(\gamma_0^{n_0} \gamma_1^{n_1} \ldots \ldots \gamma_m^{n_m})$. If the procedure never terminates we will obtain an infinite reduced word $\xi = \gamma_0^{n_0} \gamma_1^{n_1} \ldots \ldots \gamma_m^{n_m} \ldots \in \partial \Delta$ with $x \in \ell(\xi)$, contradicting our assumption that $x \not \in L(\Delta)$.   

Finally it follows directly from the above that $\Delta O$ is a dense open subset contained in $X \setminus L(\Delta)$. Proving (\ref{itm:nd}). 
\end{proof}

\subsection{Possible partial permutations}
A {\it{possible partial permutation}} is a triplet of the form $\phi=(m,\alpha,\beta)$ with $m \in \N$, $\alpha = (a_1,a_2,\ldots, a_m)$, $\beta = (b_1,b_2,\ldots,b_m) \in \Gamma^m$. A possible partial permutation is called {\it{special}} if $a_1=b_1=e$. We will use the notation $\phi = (m(\phi),\alpha(\phi),\beta(\phi))$ to emphasize the data which is associated with a given possible partial permutation $\phi$. Denote by $\ppp = \ppp(\Gamma)$ the set of all possible partial permutations of $\Gamma$, and by $\ppp^{0}$ the collection of special ones. A given $\phi \in \ppp$ is said to be {\it{legitimate}} modulo a subgroup $\Delta < \Gamma$ if both $\alpha,\beta$ give rise to $m$ distinct elements 
$$
 \alpha \Delta = \{\alpha_{i} \Delta \ | \ 1 \le i \le m\},~\beta \Delta = \{\beta_{i} \Delta \ | \ 1 \le i \le m\} \subset \Gamma/\Delta.
$$ 
Such a legitimate $\phi \in \ppp$ defines a partial map on $\Gamma/\Delta$, which we denote by the same letter $\phi: \alpha \Delta \arrow \beta \Delta,$ given by $\phi(\alpha_{i} \Delta) = \beta_{i} \Delta, \ 1 \le i \le m$. Finally if there exists an element $\gamma \in \Gamma$ such that the partial map $\phi$ is the restriction of the quasiregular action $\gamma: \Gamma/\Delta \arrow \Gamma/\Delta$ we will say that $\phi$ is {\it{realized by $\gamma$}}. With this terminology in place note the following characterization of highly transitive actions in terms of the properties of a stabilizer of a point. 
\begin{lemma}\label{lem:PPP}
Let $\Delta < \Gamma$ be a subgroup and $\psi: \Gamma \arrow \Sym(\Gamma/\Delta)$ the corresponding transitive action. Then the following conditions are equivalent
\begin{itemize}
\item $\psi$ is highly transitive (i.e. has a dense image),
\item every possible partial permutation $\phi \in \ppp(\Gamma)$ which is legitimate modulo $\Delta$ is realized by some $\gamma \in \Gamma$,  
\item every special possible partial permutation $\phi \in \ppp^{0}(\Gamma)$ which is legitimate modulo $\Delta$ is realized by some $\delta \in \Delta$. 
\end{itemize}
\end{lemma}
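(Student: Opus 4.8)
The plan is to prove the three conditions equivalent by first identifying the top bullet with the middle one as a direct dictionary between the permutation topology on $\Sym(\Gamma/\Delta)$ and partial maps on the coset space, and then deriving the passage between realization in $\Gamma$ (middle bullet) and realization in $\Delta$ (bottom bullet) from two elementary left-translation observations. Throughout I would use the standard fact that a subgroup of $\Sym(\Gamma/\Delta)$ is dense in the topology of pointwise convergence exactly when it meets every basic open set, that a basic open set is precisely the prescription of a finite partial injection $x_i \mapsto y_i$ between distinct points, and that realizing all such finite partial injections by genuine group elements is the same as being $m$-transitive for every $m$, i.e. highly transitive. I would record as a harmless standing assumption that $\Gamma/\Delta$ is infinite, so that density in $\Sym(\Gamma/\Delta)$ is indeed the right formulation of high transitivity.

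For the equivalence of the \textbf{first} and \textbf{second} bullets I would argue that this is a pure translation of definitions: every $m$-tuple of distinct cosets has the form $\alpha\Delta = (a_1\Delta,\ldots,a_m\Delta)$ for some $\alpha \in \Gamma^m$, and likewise for the targets $\beta\Delta$; legitimacy of $\phi=(m,\alpha,\beta)$ is exactly distinctness of the source and target points, and ``$\phi$ is realized by $\gamma$'' says $\gamma\,a_i\Delta = b_i\Delta$ for all $i$. Hence ``every legitimate $\phi \in \ppp(\Gamma)$ is realized'' is verbatim the statement that $\psi$ carries an arbitrary finite tuple of distinct cosets onto an arbitrary prescribed tuple of distinct cosets, which is high transitivity and hence density of the image.

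For \textbf{second} $\Rightarrow$ \textbf{third}, given a special legitimate $\phi$ (so $a_1=b_1=e$), the middle bullet produces $\gamma\in\Gamma$ realizing it; evaluating at the base point gives $\gamma\Delta = \gamma a_1\Delta = b_1\Delta = \Delta$, so automatically $\gamma\in\Delta$, which is the content of the bottom bullet. For \textbf{third} $\Rightarrow$ \textbf{second}, given an arbitrary legitimate $\phi=(m,\alpha,\beta)$ I would left-normalize it by setting $a_i^0 := a_1^{-1}a_i$ and $b_i^0 := b_1^{-1}b_i$. Since left multiplication by a fixed element is a bijection of $\Gamma/\Delta$, the tuple $\phi^0=(m,\alpha^0,\beta^0)$ is again legitimate, and now $a_1^0=b_1^0=e$, so $\phi^0$ is special. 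The bottom bullet then yields $\delta\in\Delta$ with $\delta a_1^{-1}a_i\Delta = b_1^{-1}b_i\Delta$, and multiplying on the left by $b_1$ shows that $\gamma := b_1\delta a_1^{-1}$ satisfies $\gamma a_i\Delta = b_i\Delta$, so $\gamma$ realizes $\phi$. This closes the loop.

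The whole argument is soft and combinatorial, with no analytic or geometric input, so there is no substantial obstacle. The one point I would single out as requiring care is the interplay of the two translations: realization is not left-invariant on the nose, since under the normalization the realizing element transforms as $\gamma \mapsto b_1\gamma a_1^{-1}$. One must therefore track how source and target translations act on the realizer, and observe that precisely in the special case $a_1=b_1=e$ this adjustment is trivial, which is exactly what forces the realizing element to lie in $\Delta$ rather than merely in $\Gamma$. Getting this bookkeeping right is the entire (modest) content of the lemma.
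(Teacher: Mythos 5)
Your proof is correct and is exactly the routine translation argument the paper intends: the paper in fact states Lemma \ref{lem:PPP} without proof, and your three steps (identifying density in $\Sym(\Gamma/\Delta)$ with realization of all legitimate partial permutations, specializing via $\gamma\Delta=\Delta$ to land in $\Delta$, and left-normalizing with the conjugation-type adjustment $\gamma = b_1\delta a_1^{-1}$) supply precisely the omitted details. No gaps.
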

\begin{definition}
A subgroup $\Delta < \Gamma$ satisfying the equivalent conditions of Lemma \ref{lem:PPP} will be called {\it{co-highly transitive}} or {\it{co-ht}} for short. 
\end{definition}
\subsection{High transitivity proofs}


\begin{proof}[Proof of Theorem \ref{thm:rank_one_ht}]
The equivalence of (\ref{r1_ast}), (\ref{r1_zd}) and (\ref{r1:nvs}) follows directly from Lemma \ref{lem:linear_ast} combined with the fact that every connected proper algebraic subgroup of $\SL_2$ is solvable. That (\ref{r1_ht}) implies (\ref{r1_ast}) follows from Proposition \ref{prop:nec} combined with the fact that primitive groups of affine and of diagonal type are never highly transitive. Indeed it is proven in that proposition that these groups are semidirect products of the from $\Delta \ltimes M$ and that their unique primitive action is the standard affine action $\Gamma \action M$. If this action were to be highly transitive it would follow that the conjugation action of $\Delta$ on $M \setminus \{e\}$ is highly transitive, which is absurd for any group. Thus we remain with our main task for this section: to construct a co-ht core free subgroup $\Delta$ in $\Gamma$ whenever $\Gamma < \SL_2(k)$ is of almost simple type. We construct such a subgroup that is infinitely generated Schottky, coming from a ping-pong game on $Y = L(\Gamma) \subset \PP(k^2)$. The limit set of $\Gamma$, given by Lemma \ref{lem:limit}(\ref{itm:limit}). 

Let $\ppp^0 = \{\phi_2,\phi_4,\phi_6, \ldots \}$ be an enumeration of all special possible partial permutations of $\Gamma$. Let $\{ \gamma_1 \langle \langle n_1 \rangle \rangle,  \gamma_3 \langle \langle n_3 \rangle \rangle,  \gamma_5 \langle \langle n_5 \rangle \rangle, \ldots \}$ be an enumeration of all cosets of all the normal subgroups that are generated by a nontrivial conjugacy class. We construct, by induction on $\ell$, an increasing sequence of precise, spacious Schottky groups $\Delta_{\ell}  = \langle \delta_1,\delta_2,\ldots, \delta_{N_{\ell}}\rangle$ subject to the following properties:
\begin{itemize}
\item $(\delta_1,\delta_2,\ldots, \delta_{N_{\ell}})$ is a precise spacious ping-pong tuple on $L(\Gamma)$. 
\item For $\ell$ even, if $\phi_{\ell}$ is legitimate modulo $\Delta_{\ell-1}$ then it is realized  modulo $\Delta_{\ell}$ (by some element of $\Delta_{\ell}$).  
\item For $\ell$ odd, $\Delta_{\ell} \bigcap \gamma_{\ell} \langle \langle n_{\ell} \rangle \rangle \ne \emptyset$. 
\end{itemize}
Setting $\Delta = \bigcup_{\ell} \Delta_{\ell} = \langle \delta_1, \delta_2, \ldots \rangle$, the condition imposed on the odd steps guarantees that $\Delta$ will be prodense, and in particular core free. The even steps ensure that it is co highly transitive, by virtue of Lemma \ref{lem:PPP}. Hence the coset action $\Gamma \action \Gamma/\Delta$ will be faithful and highly transitive. 

The odd steps of the induction were already treated, in greater generality, in the proof of Theorem \ref{thm:ast1}. We will not repeat the argument here and turn directly to the even steps. Fix an even $\ell$. For the sake of a better legibility, we will often omit $\ell$ from the notation. For example we will denote $\phi = \phi_{\ell} = (m,\alpha,\beta)$. If $\phi = \phi_{\ell}$ is illegitimate modulo $\Delta_{\ell-1}$ then it would definitely be illegitimate modulo any larger subgroup. In this case we just declare $\Delta_{\ell} = \Delta_{\ell - 1}$. From here on we suppose that $\phi_{\ell}$ is legitimate modulo $\Delta_{\ell-1}$. 

By our induction assumption $\Delta_{\ell-1}$ is generated by a precise ping-pong tuple $\Delta_{\ell-1} = \langle \delta_1,\delta_2, \ldots, \delta_{N_{\ell-1}} \rangle$. We will find $\gamma \in \Gamma$ such that 
$
 \left \{ \delta_1,\delta_2  , \ldots, \delta_N, \gamma, b_2^{-1} \gamma a_2, \ldots, b_m^{-1} \gamma a_m \right \}
$
still constitutes a precise ping-pong tuple. Setting $\Delta_{\ell}$ to be the group generated by these elements, after renaming them appropriately\footnote{ That is, setting $\delta_{N_{\ell-1}+1} = \gamma, \delta_{N_{\ell-1}+2}=b_2^{-1} \gamma a_2, \ldots, \delta_{N_{\ell-1}+m}=b_m^{-1} \gamma a_m,$ and setting $N_{\ell} = N_{\ell-1} + m$.}, one  verifies that $\phi_n$ is now realized by $\gamma \in \Delta_{\ell}$ modulo $\Delta_{\ell}$. 

Let $\{\Omega_i^{\pm}\}_{i = 1 \ldots N_{\ell -1}}$ be precise attracting and repelling points for the generators of $\Delta_{\ell-1}$ and $O = L(\Gamma) \setminus \bigcup_{i =1}^{N_{\ell-1}} (\Omega_i^{-} \cup \Omega_i^{+})$ the fundamental domain, given by Lemma \ref{lem:precise_ping_pong}, for the action $\Delta_{\ell-1} \action L(\Gamma) \setminus L(\Delta_{\ell-1})$. Since, by assumption, $\Delta_{\ell-1}$ is spacious Schottky inside $\Gamma$ the set $L(\Gamma) \setminus L(\Delta_{\ell -1})$ is nonempty. Hence by Item (\ref{itm:nd}) in that same Lemma $O \subset L(\Gamma)$ is relatively open and dense. 

Consider the sets 
$$
 R = \bigcap_{i=1}^m a_i^{-1} \Delta O, A = \bigcap_{i=1}^m b_i^{-1} \Delta O \subset L(\Gamma).
$$ 
By the Baire category theorem both sets are open and dense in $L(\Gamma)$. In particular we can find two points $a \in A, r \in R$. Let 
$$
 \{\theta_j \ | \ 1 \le j \le m\}, \{\eta_j \ | \ 1 \le j \le m\}
$$ 
be the unique elements of $\Delta_{\ell-1}$ satisfying $\theta_j^{-1} a_j^{-1} r \in O {\text{ and }} \eta_j^{-1} b_j^{-1} a \in O, \ \forall 1 \le j \le m$. Thus the sets
\begin{eqnarray*}
R_1  & := & \{x \in R \ | \ \theta_j^{-1} a_j^{-1} x \in O, \quad \forall 1 \le j \le m\} {\text{ and }} \\
A_1 & := & \{y \in A \ | \ \eta_j^{-1} b_j^{-1} y \in O, \quad \forall 1 \le j \le m \},
\end{eqnarray*}
are open and nonempty. 

By Lemma \ref{lem:limit} (\ref{itm:dense_pairs}) we can find a very proximal element $\gamma \in \Gamma$ with repelling and attracting points $(\overline{v}^{-},\overline{v}^{+})$ subject to the following open conditions:
\begin{enumerate}
\item \label{itm:dd} $\overline{v}^{-} \in R_1, \overline{v}^{+} \in A_1$,
\item \label{itm:supp} $\overline{v}^{-} \in \Supp(a_k \theta_k \theta_j^{-1} a_j^{-1})$ and $\overline{v}^{+} \in \Supp(b_k \eta_k \eta_j^{-1} b_j^{-1}), \ \ \forall 1 \le j \ne k \le m$, 
\item $b_k \eta_k \theta_j^{-1} a_j^{-1} \overline{v}^{-} \ne \overline{v}^{+}, \qquad \forall 1 \le j,k \le m$.
\end{enumerate}
For the second conditions we used Lemma \ref{lem:limit}(\ref{itm:ess_free}) together with our assumption that $\phi$ is legitimate modulo $\Delta_{\ell-1}$ to ensure that $a_k \theta_k \theta_j^{-1}a_j^{-1} \ne e$ and $b_k \eta_k \eta_j^{-1} b_j^{-1} \ne e$. 

The above choices guarantee that 
$$
 \{r_j:= \theta_j^{-1} a_j^{-1} \gamma^{-}, a_j:=\eta_j^{-1} b_j^{-1} \gamma^{+} \ | \ 1 \le j \le m \}
$$ are $2m$ distinct points inside $O$. We propose a precise ping-pong generating set for $\Delta_{\ell}$ of the form
$$
 \left \{ \delta_1,\delta_2  , \ldots, \delta_N, \gamma^k, (b_2 \eta_2)^{-1} \gamma^k(a_2 \theta_2), \ldots,  (b_m \eta_m)^{-1} \gamma^k(a_m \theta_m) \right \}.
$$
If we can adjust the parameter $k$ so that these are indeed a precise ping-pong tuple, all the desired properties hold. In particular $\gamma^k \in \Gamma$ would be the element realizing $\phi$. 

But if $\Omega^{\pm}$ are repelling and attracting neighborhoods for $\gamma^{k}$ then 
\begin{equation} \label{eqn:neb}
r_j \in \theta_j^{-1} a_j^{-1} \Omega^{-}, a_j \in \eta_j^{-1} b_j^{-1} \Omega^{+},
\end{equation}
serve as repelling and attracting neighborhoods for $(b_j \eta_j)^{-1} \gamma^k(a_j \theta_j)$. By setting $k$ large enough we can make the neighborhoods $\Omega^{\pm}$ arbitrarily small. Using Lemma \ref{lem:hyp_prec} we can impose the condition that these sets are precise, pairwise disjoint and contained in $O$. This completes the proof of the theorem. 
 \end{proof} 
 
 \section{Groups which are not linear-like} \label{sec:nonlin}
The proof of Theorem \ref{thm:MS} was based on the construction of a profinitely dense subgroup and then passing to a maximal subgroup containing it. We have seen by now many variants on this idea. One common feature of all constructions described so far was that the profinitely dense subgroup was free, and constructed by various ping-pong games a-la Tits. Consequently all primitive groups that we have encountered so far are {\it{large}} in the sense that they contain a nonabelian free subgroup. One notable exception were some of the primitive groups of affine and diagonal type that we encountered. 

This is, in fact a feature of the methods we used so far. There are many primitive groups of almost simple type that do not contain free subgroups. In fact there are even linear examples. Take the group $\PSL_n(K)$ where $K$ is any countable locally finite field. For instance one could take $K = \overline{F_p}$ to be the algebraic closure of $F_p$. These are all primitive groups of almost simple type, even though they are locally finite. In the case $n=2$ the group $\PSL_n(K)$ is even 3-transitive, by virtue of its standard action on the projective line $\PP(K)$. The goal of this section is to give a short survey, devoid of proofs, on some other results which are of different nature than the ones considered in previous sections. 

\subsection{Some groups of subexponential growth} It is easy to verify that the Grigorchuk group $G$ is of almost simple type. Though it is not stably so in the sense that many of its finite index subgroups fail to be of almost simple type. In fact $G_n < G$, the stabilizer in $G$ of the $n^{th}$ level of the tree splits as the direct product of $2^n$ subgroups. 

It was asked by Grigorchuk whether all maximal subgroups of the first Grigorchuk group are of finite index. This question was answered positively by Pervova:
\begin{theorem}[\cite{per:max}] Every maximal subgroup of the Grigorchuk group is of finite index. 
\end{theorem}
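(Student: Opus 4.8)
The plan is to reduce the statement to the assertion that the Grigorchuk group $G$ has no proper \emph{prodense} subgroup, and then to prove the latter by exploiting the self-similar (branch) structure of $G$ together with its contraction and torsion properties. Recall that $G$ acts on the rooted binary tree, that each level stabiliser $G_n$ has finite index, and call $H \le G$ prodense if $H\,G_n = G$ for every $n$. For the reduction, let $M$ be a maximal subgroup. For each $n$ the subgroup $M\,G_n$ contains $M$, so by maximality either $G_n \le M$ or $M\,G_n = G$. If $G_n \le M$ for some $n$ then $[G:M] \le [G:G_n] < \infty$ and we are done; otherwise $M$ is prodense. Thus it suffices to show that the only prodense subgroup of $G$ is $G$ itself: then a proper prodense $M$ cannot exist and every maximal subgroup has finite index. (The reduction itself needs only that the $G_n$ are of finite index; one may invoke the congruence subgroup property of $G$ to identify prodensity with profinite density.)

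The structure I would use is the following. Write $G = \langle a,b,c,d\rangle$ with $a$ the root involution and $b,c,d$ the level-one involutions, and let $\psi\colon G_1 \hookrightarrow G \times G$ be the section (wreath-recursion) map, given on generators by $\psi(b) = (a,c)$, $\psi(c) = (a,d)$, $\psi(d) = (1,b)$. The three ingredients are: (i) $G^{\mathrm{ab}} \cong (\Z/2\Z)^3$, with the classes of $a,b,c$ a basis; (ii) the length contraction $|g_0| + |g_1| \le \eta\,|g| + C$ for $\psi(g) = (g_0,g_1)$, with some $\eta < 1$; and (iii) the fact that $G$ is a $2$-group, so every element has finite order. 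I would also use that $G$ is regular branch over the finite-index subgroup $K = \langle (ab)^2\rangle^G \le G_1$, i.e.\ that $K \times K \le \psi(K)$.

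For the core assertion, let $M$ be prodense; I want $M = G$, and it is enough to force all four generators into $M$. Since $G'$ has finite index it contains some $G_n$ by the congruence subgroup property, so $M\,G' = G$ and hence $M$ contains elements congruent modulo $G'$ to each generator. The heart of the argument is to upgrade these congruences to actual membership. Following Pervova's analysis, I would show that the two coordinate sections of $\psi(M \cap G_1)$ are again prodense subgroups of $G$, so that prodensity descends through $\psi$; combined with the contraction (ii) this sets up an induction on word length, while the torsion property (iii) is used to straighten an element of $2$-power order that is congruent to $b$, $c$ or $d$ modulo $G'$ into the generator itself. Pushing a putative shortest element witnessing $M \neq G$ through $\psi$ then yields a strictly shorter witness inside a section, a contradiction; this gives $b,c,d \in M$, and a short additional first-level argument recovers $a$, whence $M = G$.

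I expect the descent to be the main obstacle. One must verify that taking coordinate sections of $\psi(M \cap G_1)$ genuinely preserves prodensity — this is where the regular branch property $K \times K \le \psi(K)$ is essential — and one must keep careful track of the elementary abelian quotients at each vertex so that the torsion correction at the inductive step is legitimate. Everything else, namely the reduction and the recovery of $a$ from $b,c,d$, is soft by comparison.
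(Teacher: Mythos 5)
This survey states Pervova's theorem as a cited result and gives no proof (Section \ref{sec:nonlin} is explicitly ``devoid of proofs''), so there is nothing in the paper to compare against; I can only assess your outline on its own terms. The overall strategy is the right one and is essentially Pervova's: the reduction is correct and complete as stated (for normal $G_n$ the product $MG_n$ is a subgroup containing $M$, so maximality forces $G_n\le M$ or prodensity), and the descent lemma you flag as the main obstacle is in fact true and not hard --- for a prodense $H$ one applies Dedekind's modular law to $H\cdot\mathrm{St}_G(n+1)=G$ inside $\mathrm{St}_G(1)$ and projects, using only that $G$ is self-replicating (the coordinate projections of $\psi(\mathrm{St}_G(1))$ are onto); the regular branch property is not what makes that step work, though it is needed elsewhere.

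The genuine gap is the step you pass over as soft: the ``ascent.'' Your length-reduction argument, run correctly, produces a minimal-length witness congruent to $b$ and shows, via the cycle $b\mapsto c\mapsto d\mapsto b$ of the wreath recursion, that this witness has length one --- but it lives in some section $M_v$ at an a priori deep vertex $v$, not in $M$ itself. Concluding ``$b,c,d\in M$'' from ``$b,c,d\in M_v$'' is not legitimate, and even the stronger statement that every level-$n$ section of $M$ equals $G$ does not formally give $M=G$: a subgroup of $G\times\cdots\times G$ can surject onto every factor while being proper. Closing this requires a separate argument --- in Pervova's treatment one uses the regular branch property over $K$ together with a Goursat/commutator computation to show that such an $M$ contains $\psi_n^{-1}(K'\times\cdots\times K')$, hence has finite index, and a finite-index prodense subgroup is everything. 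Your proposal does not mention this step at all, and it is where the branch structure genuinely enters. A minor additional slip: the contraction inequality does not hold in the summed form $|g_0|+|g_1|\le\eta|g|+C$ with $\eta<1$ for the standard generators; what is true, and what the induction actually needs, is the per-coordinate bound $|g_i|\le(|g|+1)/2$, which is strictly decreasing only for $|g|\ge 2$ --- exactly enough to terminate the descent at a generator.
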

Thus we have an example of a residually finite, finitely generated group of almost simple type admitting no maximal subgroup of infinite index. The results of Pervova were later generalized in \cite{AKT:max} to encompass a much larger family of groups. However there are groups of subexponential growth that admit infinite index maximal subgroups as shown by the following remarkable theorem of Francoeur and Garrido. The theorem deals with a family of groups that they coin {\it{\v{S}uni\'{c} groups}}, as they were defined by Zoran \v{S}uni\'{c} in \cite{Sun:family}. We refer the readers to one of these articles for the precise definition of these groups, remarking here only that it is a family of finitely generated groups, of sub-exponential growth, acting on the binary rooted tree. 
\begin{theorem}[\cite{FG:max}] Let $G$ be a nontorsion  \v{S}uni\'{c}  group that is not isomorphic to the infinite dihedral group, acting on the rooted binary tree. Then $G$ admits countably many maximal subgroups, out of which only finitely many are of finite index. 
\end{theorem}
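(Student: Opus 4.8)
The plan is to handle the finite-index and the infinite-index maximal subgroups by entirely different mechanisms, using throughout the self-similar branch structure of a \v{S}uni\'{c} group $G$ acting on the binary rooted tree $T$, together with the infinite-order element $g \in G$ furnished by the non-torsion hypothesis. Write $\operatorname{St}_G(n)$ for the $n$-th level stabilizer, $\operatorname{Rist}_G(v)$ for the rigid stabilizer of a vertex $v$, $\partial T$ for the boundary, and $P_\xi = \Stab_G(\xi)$ for the parabolic subgroup fixing a ray $\xi \in \partial T$. There are two things to prove: that only finitely many maximal subgroups have finite index, and that there are countably many maximal subgroups altogether, of which infinitely many have infinite index.

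For the finite-index maximal subgroups I would show that they are all controlled by a single finite quotient. I would first invoke the congruence structure of $G$, so that every finite-index subgroup contains some $\operatorname{St}_G(n)$; thus a finite-index maximal subgroup is the full preimage of a maximal subgroup of the finite group $G/\operatorname{St}_G(n)$. The decisive input is then a Frattini argument along the tower $\{\operatorname{St}_G(n)\}_n$: because $G$ is branch and self-replicating, for all $n$ beyond a fixed bound $N$ the layer $\operatorname{St}_G(n)/\operatorname{St}_G(n+1)$ lies inside the Frattini subgroup of $G/\operatorname{St}_G(n+1)$, being generated by commutators and by the power relations prescribed by the defining data. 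Since any maximal subgroup of a finite group contains its Frattini subgroup, an easy downward induction shows that every finite-index maximal subgroup of $G$ already contains $\operatorname{St}_G(N)$. Hence these subgroups are exactly the preimages of the maximal subgroups of the fixed finite group $G/\operatorname{St}_G(N)$, of which there are finitely many. The non-torsion hypothesis plays no role here; it is the uniform branching that forces the deep layers into the Frattini subgroup.

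The construction of the infinite-index maximal subgroups is where the element $g$ of infinite order becomes essential, and it is the heart of the matter. I would fix an eventually periodic ray $\xi \in \partial T$ chosen so that the germ of $g$ read along $\xi$ stabilizes to an infinite-order state, and then build a subgroup $M_\xi$ lying between $P_\xi$ and $G$ by requiring $M_\xi$ to contain every rigid stabilizer $\operatorname{Rist}_G(v)$ for $v$ off the ray $\xi$, while constraining the sections of its elements read along $\xi$ by a condition compatible with $\langle g \rangle$. Such an $M_\xi$ is proper and of infinite index, its cosets being detected by the behaviour along the ray. The infinite-order element supplies, along $\xi$, a $\Z$-worth of shifting dynamics that in this subexponential-growth setting replaces the hyperbolic dynamics used in the rank-one arguments of the previous sections. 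I would then establish maximality by the standard dichotomy: given $h \notin M_\xi$, the section of $h$ along $\xi$ breaks the defining constraint at some finite depth $k$, and conjugating this failure by powers of $g$ and combining it with the rigid stabilizers already contained in $M_\xi$ forces $\langle M_\xi, h \rangle$ to contain $\operatorname{St}_G(k)$ for every $k$, hence to be all of $G$.

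Finally, to cap the total count at $\aleph_0$, I would prove the converse structural statement: every infinite-index maximal subgroup contains a parabolic $P_\xi$, and the ray $\xi$ is necessarily eventually periodic, since an aperiodic ray would let the contraction of the self-similar action generate a full level stabilizer inside the subgroup and so contradict infinite index. As there are only countably many eventually periodic rays, and only finitely many maximal subgroups lie between a given $P_\xi$ and $G$, the infinite-index maximal subgroups form a countable family, which together with the finitely many from the first step yields countably many in all; the explicit family $M_\xi$ shows the infinite-index part is genuinely infinite. I expect the hard step to be precisely this maximality-and-classification package of the last two paragraphs: it is exactly where Pervova's torsion \v{S}uni\'{c} groups behave oppositely (there the analogous $M_\xi$ are never maximal), so the non-torsion hypothesis must be converted into a concrete statement that a single extra element, together with the off-ray rigid stabilizers, generates all deep level stabilizers.
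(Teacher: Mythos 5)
A preliminary remark: this survey contains no proof of the statement you were given. It sits in Section \ref{sec:nonlin}, which the authors explicitly announce as ``a short survey, devoid of proofs,'' and the theorem is quoted from Francoeur--Garrido \cite{FG:max}. So there is no in-paper argument to compare yours against, and I can only measure your proposal against the source and against known structure of these groups.

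Measured that way, the infinite-index half of your argument has a fatal flaw. You build candidate maximal subgroups $M_\xi$ with $P_\xi \le M_\xi \le G$, and you close the counting by asserting that every infinite-index maximal subgroup contains some parabolic $P_\xi$. Both steps collide with what is known: a parabolic subgroup of a finitely generated branch group is \emph{weakly} maximal, i.e.\ maximal among subgroups of infinite index (Bartholdi--Grigorchuk for the Grigorchuk group, extended to the \v{S}uni\'{c} family), yet it is never maximal, being properly contained in the proper finite-index subgroup $\Stab_G(\xi_n)$ since $G$ is level-transitive and $P_\xi$ is not. Hence every subgroup strictly between $P_\xi$ and $G$ has finite index, so no $M_\xi$ of your form can be an infinite-index maximal subgroup; and your classification claim, combined with weak maximality, would prove that infinite-index maximal subgroups do not exist at all --- the opposite of the theorem. (There is also an internal slip: the $\mathrm{St}_G(k)$ are nested downward, so ``contains $\mathrm{St}_G(k)$ for every $k$'' yields only finite index, not equality with $G$.) The mechanism in \cite{FG:max} is genuinely different and avoids parabolics entirely: a maximal subgroup of infinite index in a just-infinite branch group is automatically prodense and level-transitive, hence far from containing a boundary-point stabilizer; Francoeur and Garrido use the infinite-order element to twist a generator and produce proper subgroups of $G$ isomorphic to $G$ itself, prove these are prodense and then maximal, and obtain countability by showing every maximal subgroup of infinite index is conjugate to one of finitely many such explicit subgroups. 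Your finite-index paragraph is closer to the truth but also rests on unestablished inputs (the congruence subgroup property for all non-torsion \v{S}uni\'{c} groups, and the unproved assertion that deep congruence layers fall into Frattini subgroups); the route in the literature, already in Pervova \cite{per:max}, is to show that every finite primitive quotient is cyclic of order two, so every finite-index maximal subgroup is normal of index $2$ and contains $G^2[G,G]$, whose quotient is finite. I would encourage you to rebuild the construction around prodense subgroups rather than parabolic ones.
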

Other examples of groups admitting countably many maximal subgroups of infinite index include Tarski monsters (that have only countably many subgroups to begin with) as well as some examples of affine type constructed by Hall in \cite{Hall:fin}.

\subsection{Thompson's group $F$} The Abelianization $F/F'$ of the Thompson group is isomorphic to $\Z^2$. Any nontrivial normal subgroup of $F$ contains the commutator, thus $F$ is clearly of almost simple type. A subgroup $\Delta < F$ is profinitely dense if and only if it maps onto the abelianization so it is not surprising that $F$ contains many infinite index maximal subgroups. 

In \cite{sv:Fgraph1,sv:Fgraph2} Savchuk proves that all the orbits of the natural action of $F$ on the interval $(0,1)$ are primitive. Otherwise put $F_{v} < F$ is a maximal subgroup of infinite index for every $v \in (0,1)$. A notable fact is that many of these maximal subgroups are even finitely generated. Savchuk shows in particular that $F_v$ is finitely generated whenever $v \in \Z[1/2]$ and that it fails to be finitely generated whenever $v$ is irrational. 

All of the facts mentioned above are not difficult to verify. Svachuk asked whether Thompson's group $F$ contains maximal, infinite index subgroups that fail to fix a point. In a beautiful paper, Golan and Sapir \cite{GS:max} answer this question positively by constructing many very interesting examples of maximal subgroups. In particular they prove the following:
\begin{theorem}[\cite{GS:max}] The Jones group $\vec{F} < F$, constructed in \cite{J:Jgroup} is maximal of infinite index.
\end{theorem}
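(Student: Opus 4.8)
The plan is to recast the statement in the language of Section~\ref{sec:nec}: since the cosets of $\vec F$ carry a transitive action $F \acts F/\vec F$, the subgroup $\vec F$ is maximal exactly when this action is primitive, and it is of infinite index as soon as it is proper and profinitely dense (a maximal subgroup of a finitely generated group that is \emph{not} profinitely dense is contained in, hence equal to, a proper finite-index subgroup). I would therefore treat the index and the maximality separately, disposing of the index quickly and concentrating the real effort on maximality.

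For the index I would invoke Savchuk's criterion quoted above: a subgroup of $F$ is profinitely dense if and only if it surjects onto the abelianization $F/F' \cong \Z^2$. Using Jones' explicit description of $\vec F$ via the link/diagram construction (equivalently, the concrete generating set and isomorphism type worked out by Golan--Sapir), one verifies that $\vec F$ already surjects onto $F/F'$; being a proper subgroup of the finitely generated group $F$, it is then of infinite index. In passing this records the feature relevant to Savchuk's question, namely that $\vec F$ fixes no point of $(0,1)$ and in fact acts transitively on the dyadic rationals.

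The heart of the proof is maximality, i.e. showing $\langle \vec F, g\rangle = F$ for every $g \in F \setminus \vec F$. Here I would follow Golan--Sapir's study of subgroups of $F$ through their action on the dyadic points of $(0,1)$ and the accompanying diagram-group (Stallings-type) machinery. An element $g \notin \vec F$ is detected by Jones' orientation invariant: in the reduced tree-pair diagram of $g$ some caret violates the colouring that characterises membership in $\vec F$. The aim is to build the standard generators of $F$ out of $g$ and $\vec F$. I would exploit that $\vec F$ is itself profinitely dense and acts with ample transitivity on configurations of dyadic subintervals, so that suitable conjugates $\delta g \delta^{-1}$ with $\delta \in \vec F$, and products thereof, can be localised on prescribed intervals; combining such a colour-violating element with the rich supply of oriented elements of $\vec F$ should produce elements acting as the generators $x_0, x_1$ on ever finer dyadic subdivisions, and a closure argument showing $\Cl(\langle \vec F, g\rangle) = F$ would then upgrade this to the conclusion $\langle \vec F, g \rangle = F$.

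The main obstacle is precisely this last step: controlling the subgroup generated by $\vec F$ together with a single \emph{arbitrary} element. In contrast to the linear setting of Theorem~\ref{thm:ast1}, there is no ambient proximal dynamics on a projective space to drive a ping--pong construction, so one is forced to argue through the fine combinatorics of tree-pair diagrams and the interaction between the orientation invariant and the closure operation on subgroups of $F$. Concretely, the delicate point is to rule out any subgroup strictly between $\vec F$ and $F$, equivalently to show that the orientation invariant supports no nontrivial $F$-invariant block system on $F/\vec F$; this primitivity statement is the genuine content of the theorem.
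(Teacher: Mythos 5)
First, a point of comparison: the paper does not prove this statement at all. It appears in Section~\ref{sec:nonlin}, which is explicitly ``a short survey, devoid of proofs''; the theorem is simply quoted from \cite{GS:max}. So there is no internal proof to measure your attempt against, and your proposal has to stand on its own as a reconstruction of the Golan--Sapir argument.

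On its own terms, the proposal has a genuine gap exactly where you locate it. The infinite-index half is fine (profinite density of $\vec F$ via surjection onto $F/F'\cong\Z^2$, plus the observation that a proper finite-index subgroup of a finitely generated group is never profinitely dense), although your parenthetical that this ``records'' that $\vec F$ fixes no point of $(0,1)$ is a non sequitur: the point stabilizers $F_v$ are themselves maximal of infinite index, hence profinitely dense and surjective onto the abelianization, yet they do fix a point. The maximality half, however, is not an argument but a description of what an argument would have to do: ``suitable conjugates \dots can be localised,'' ``should produce elements acting as the generators,'' ``a closure argument \dots would then upgrade this.'' Two concrete things are missing. First, you never exhibit the mechanism by which a single orientation-violating element $g$, together with $\vec F$, generates elements with prescribed local behaviour; this is the entire content of the Golan--Sapir proof and it rests on their explicit identification of $\vec F$ (as the stabilizer of an orientation/parity datum on dyadic rationals, isomorphic to the triadic group $F_3$) and on their diagram-group closure machinery. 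Second, the final ``upgrade'' step tacitly uses the implication $\Cl(\langle \vec F,g\rangle)=F \Rightarrow \langle \vec F,g\rangle=F$, which is false for general subgroups of $F$: Golan--Sapir's criterion for a subgroup $H$ to equal $F$ requires $\Cl(H)=F$ \emph{together with} an additional condition on $H$ (on orbits of dyadic points, equivalently on germs at the endpoints), and you would need to verify that extra condition for $\langle \vec F,g\rangle$. As written, the proposal correctly frames the problem but does not prove maximality.
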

This is the same group constructed by Vaughn Jones for establishing connections between link theory and Thompson's group $F$. It was shown by Brown in \cite{BR:Jgp} that the group $\vec{F}$ is isomorphic to the {\it{triadic}} version of the group $F$ itself. Namely the group of all orientation preserving homeomorphisms of the interval $[0,1]$ which are piecewise linear with slopes that are powers of three and finitely many non-differentiability points, all of which are contained in $\Z[1/3]$. 

In addition to this one example, which is of particular interest due to the fact that this group is finitely generated, and of independent interest; Golan and Sapir provide general methods of constructing a large variety of infinite index maximal subgroups in $F$.

\subsection{Locally finite simple groups.}
By a famous theorem of Schur, every periodic linear group is locally finite. Thus in view of  Theorem \ref{GMS} and the discussion following it, amongst the countable linear groups,  locally finite groups of almost simple type are the only ones for which Question \ref{q:main} is still open. Within this family, locally finite simple groups constitute an interesting special case. 

In the emerging theory of locally finite simple groups (see for example the book \cite{Book:FLF}), linear groups also play a special role. In fact it is quite customary in this subject to sort the locally finite simple groups into four families of increasing complexity: (i) finite, (ii) linear (iii) finitary linear and (iv) the general case. Where:
 
\begin{definition}
A group $\Gamma$ is called {\it{finitary linear}} if there exits a vector space $V$ over a field $F$ and an embedding $\Gamma < \GL(V)$ with the property that $\Im(\gamma - I)$ is a finite dimensional subspace of $V$ for every $\gamma \in \Gamma$. 
\end{definition}

In view of all this we find the following theorem of Meierfrankenfeld extremely interesting. His theorem gives a very strong solution to Question \ref{q:main} exactly in the most complicated class of locally finite simple groups:
\begin{theorem}[\cite{MF:LFsimple}, Theorem B]
Let $\Gamma$ be a locally finite simple group that is not finitary linear. Then every finite subgroup $D < \Gamma$ is contained in some proper maximal subgroup.
\end{theorem}
Note that since $\Gamma$ here is simple every maximal subgroup is automatically core free. 

\subsection{Non essentially free homeomorphism groups of the circle and of $\partial T$} Here we wish to highlight a very recent theorem of Le Boudec and Matte Bon. We view this theorem as one of the only nontrivial obstructions currently known to a group being highly transitive. 
\begin{theorem}[\cite{BMB:triple}] 
Let $\Gamma < \Homeo(S^1)$. Assume that the action of $\Gamma$ on $S^1$ is proximal, minimal and {\it{not}} topologically free. Assume that distinct points in $S^1$ have distinct stabilizers. Then every faithful, 3-transitive action of $G$ on a set is conjugate to its given action on one of the orbits within $S^1$.
\end{theorem}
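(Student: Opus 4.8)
The plan is to reduce the statement to a single assertion about point stabilizers: if $\Gamma\acts X$ is faithful and $3$-transitive, then the stabilizer $\Delta=\Gamma_\omega$ of a base point $\omega\in X$ coincides with $\Gamma_x$ for some $x\in S^1$. Once this is known, transitivity gives a $\Gamma$-equivariant bijection $X\cong\Gamma/\Delta=\Gamma/\Gamma_x\cong\Gamma x$ onto the orbit of $x$, which is the desired conjugacy. Note first that $3$-transitivity forces $2$-transitivity, hence primitivity, so $\Delta$ is a \emph{maximal} subgroup of $\Gamma$; being a point stabilizer of a faithful primitive action it is core-free, hence self-normalizing. These two facts (maximality and self-normalization) will be used repeatedly, the first to upgrade an inclusion $\Delta\le\Gamma_x$ to an equality, the second to deduce that distinct points of $X$ have distinct stabilizers.

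First I would extract a micro-supported structure for the circle action from the three standing hypotheses. Failure of topological freeness yields a nontrivial $g\in\Gamma$ whose fixed-point set contains a nonempty open arc $J$, so that $g$ is supported in the proper closed arc $K=S^1\setminus J$. Given any nonempty open arc $I$, minimality together with proximality provides an element $\gamma\in\Gamma$ contracting $K$ into $I$, and then $\gamma g\gamma^{-1}$ is a nontrivial element supported in $I$. Consequently the action is \emph{micro-supported}: for every nonempty open arc $I$ the rigid stabilizer $\Gamma_{(I)}=\{\gamma\in\Gamma:\gamma|_{S^1\setminus I}=\mathrm{id}\}$ is nontrivial, and these subgroups transform equivariantly, $\Gamma_{(\gamma I)}=\gamma\,\Gamma_{(I)}\,\gamma^{-1}$. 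Since the abstract action $\Gamma\acts X$ is faithful, each $\Gamma_{(I)}$ moves some point of $X$.

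The heart of the argument is to manufacture a $\Gamma$-equivariant map $\pi\colon X\to S^1$ out of this subgroup-theoretic data. For $\omega\in X$ consider the closed set $S(\omega)=\{x\in S^1:\Gamma_{(I)}\not\le\Gamma_\omega\ \text{for every arc }I\ni x\}$, i.e.\ the locus of circle points near which micro-supported elements move $\omega$; equivalently, $S^1\setminus S(\omega)$ is the union of the interiors of all arcs $I$ with $\Gamma_{(I)}\le\Gamma_\omega$. The crucial claim is that $S(\omega)$ is a single point $x(\omega)$. Granting this, equivariance of the rigid stabilizers gives $S(\gamma\omega)=\gamma S(\omega)$, so $\pi\colon\omega\mapsto x(\omega)$ is a $\Gamma$-map, necessarily onto a single orbit $\Gamma x$ by transitivity of $X$. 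For $\gamma\in\Gamma_\omega$ one has $\gamma\cdot x(\omega)=x(\gamma\omega)=x(\omega)$, whence $\Delta=\Gamma_\omega\le\Gamma_{x}$; since $\Delta$ is maximal and $\Gamma_x\ne\Gamma$ (the circle action being faithful and nontrivial), in fact $\Delta=\Gamma_x$. Injectivity of $\pi$ then follows: if $\pi(\omega_1)=\pi(\omega_2)=x$ then $\Gamma_{\omega_1}=\Gamma_{\omega_2}=\Gamma_x$, and a transitive action whose point stabilizers are self-normalizing cannot have two distinct points with the same stabilizer. Here the hypothesis that distinct points of $S^1$ have distinct stabilizers is what guarantees that the assignment $x\mapsto\Gamma_x$ embeds $S^1$ into the subgroup space, so that recovering the stabilizer genuinely recovers the circle point and the target orbit is canonically determined.

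The main obstacle is precisely the claim that $S(\omega)$ is nonempty and reduces to a single point. Nonemptiness amounts to showing that the micro-supported elements moving $\omega$ cannot be cleared off an arbitrarily large portion of $S^1$; the delicate half is uniqueness, namely ruling out that the germ data of one stabilizer $\Delta$ spreads over two distinct points $x\neq y$. This is exactly where \emph{three}-transitivity is indispensable and where mere primitivity or $2$-transitivity would not suffice: if $\Delta$ contained elements supported in two disjoint arcs localizing at $x$ and at $y$, one uses the transitivity of $\Gamma$ on ordered triples — applied to $x$, $y$ and a third point separated from both — to produce a configuration incompatible with $\Delta$ fixing $\omega$, forcing $S(\omega)$ to collapse to one point. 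Combining this localization with the micro-support of the action and the injectivity of $x\mapsto\Gamma_x$ is the technical core, and it is the step I would expect to require the most care.
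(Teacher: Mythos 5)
The paper does not actually prove this statement: it is quoted from \cite{BMB:triple} in Section \ref{sec:nonlin}, which the authors explicitly describe as ``a short survey, devoid of proofs,'' so there is no in-paper argument to measure your proposal against. Your reduction of the theorem to the identification of an abstract point stabilizer $\Delta=\Gamma_\omega$ with a circle-point stabilizer $\Gamma_x$, via rigid stabilizers $\Gamma_{(I)}$ of arcs and the ``singular locus'' $S(\omega)$, is a sensible skeleton and is in the spirit of the Le Boudec--Matte Bon approach (micro-supported structure from non-topological-freeness plus minimality and proximality, an equivariant map to $S^1$ built from germ data, and self-normalization of maximal core-free stabilizers to get injectivity). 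Those surrounding steps are fine.

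However, as written this is not a proof, because the entire content of the theorem is concentrated in the one claim you defer: that $S(\omega)$ is a nonempty singleton. You flag it as ``the technical core,'' but you give no argument for either half. Nonemptiness is not routine: $S(\omega)=\emptyset$ only says every point of $S^1$ lies in \emph{some} arc whose rigid stabilizer sits inside $\Gamma_\omega$; since homeomorphisms supported in a large arc need not factor as products of elements supported in small arcs, and since conjugation enlarges arcs, you cannot immediately produce a nontrivial normal subgroup inside the core-free $\Gamma_\omega$ to get a contradiction. Uniqueness is where $3$-transitivity must enter, and the sentence ``one uses the transitivity of $\Gamma$ on ordered triples \dots\ to produce a configuration incompatible with $\Delta$ fixing $\omega$'' does not identify what that configuration is or why $2$-transitivity would not already yield it; this is precisely the commutation/disjoint-support analysis that carries the theorem. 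Until $S(\omega)=\{x(\omega)\}$ is established, the equivariant map $\pi$, and with it the whole conjugacy, does not exist. So the proposal should be regarded as a plausible plan with the decisive step missing, not as a complete argument.
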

In the same paper the authors prove a very similar theorem for group actions on a regular locally finite tree. This enables the authors to bound the transitivity degree of many groups. For example for the natural index two extension $T^{\pm}$ of Thompson's group $T$ (generated by $T$ and a reflection of the circle) they show that it admits a $3$-transitive action on a set (the set of dyadic points on the circle) but it does not admit a $4$-transitive action. 

\bibliographystyle{alpha}
\bibliography{./yair}

\noindent {\sc Yair Glasner.} Department of Mathematics.
Ben-Gurion University of the Negev.
P.O.B. 653,
Be'er Sheva 84105,
Israel.
{\tt yairgl\@@math.bgu.ac.il}

\bigskip

\noindent {\sc Tsachik Gelander.} Department of Mathematics.
Weizmann Institute of Science.
Rehovot 76100,
Israel.
{\tt tsachik.gelander\@@weizmann.ac.il}

\bigskip

\noindent {\sc Gregory So\u\i fer.} Department of Mathematics.
Bar-Ilan university.
Ramat-Gan, 5290002 Israel.
{\tt So\u\i fer\@@math.biu.ac.il}

\end{document}